\definecolor{andresblue}{rgb}{0,0.72,0.92}
\definecolor{andrespink}{rgb}{1,0,1}
\newtheorem{theorem}{Theorem}[section]
\newtheorem{proposition}[theorem]{Proposition}
\newtheorem{claim}[theorem]{Claim}
\newtheorem{lemma}[theorem]{Lemma}
\newtheorem{corollary}[theorem]{Corollary}
\theoremstyle{definition}
\newtheorem{remark}[theorem]{Remark}
\newtheorem{definition}[theorem]{Definition}
\newtheorem{example}[theorem]{Example}
\newtheorem{problem}[theorem]{Problem}
\newtheorem{question}[theorem]{Question}
\theoremstyle{remark}
\newtheorem{case}{Case}
\newcommand{\R}{\mathbb{R}}
\newcommand{\Z}{\mathbb{Z}}
\newcommand{\N}{\mathbb{N}}
\newcommand{\bv}{\mathbf{v}}
\newcommand{\bp}{\mathbf{p}}
\newcommand{\bx}{\mathbf{x}}
\newcommand{\by}{\mathbf{y}}
\newcommand{\be}{\mathbf{e}}
\newcommand{\Ehr}{\mathrm{Ehr}}
\newcommand{\defterm}[1]{\emph{#1}}
\DeclareMathOperator{\conv}{conv}
\DeclareMathOperator{\aff}{aff}
\DeclareMathOperator{\proj}{proj}
\DeclareMathOperator{\cone}{cone}
\newtheorem*{rep@theorem}{\rep@title}\newcommand{\newreptheorem}[2]{%
\newenvironment{rep#1}[1]{%
\def\rep@title{\bf #2 \ref{##1}}%
\begin{rep@theorem}}%
{\end{rep@theorem}}}
\newtheorem*{rep@proposition}{\rep@title}\newcommand{\newrepproposition}[2]{%
\newenvironment{rep#1}[1]{%
\def\rep@title{\bf #2 \ref{##1}}%
\begin{rep@proposition}}%
{\end{rep@proposition}}}
\definecolor{munsell}{rgb}{0.0, 0.5, 0.69}
\begin{document}


\title{Stack-sorting simplices: geometry and lattice-point enumeration}

\author{Eon Lee}
\address{\scriptsize{School of Electrical Engineering and Computer Science, Gwangju Institute of Science and Technology}}
\email{\scriptsize{eonlee1125@gmail.com}}

\author{Carson Mitchell}
\address{\scriptsize{Department of Mathematics, University of Southern California}}
\email{\scriptsize{carsonrm@usc.edu}}

\author{Andr\'es R. Vindas-Mel\'endez}
\address{\scriptsize{Department of Mathematics, Harvey Mudd College}, \url{https://math.hmc.edu/arvm/}}
\email{\scriptsize{avindasmelendez@g.hmc.edu}}



\begin{abstract}
We initiate the study of subpolytopes of the permutahedron that arise as the convex hulls of 
stack-sorting on permutations. 
We primarily focus on $Ln1$ permutations, i.e., permutations of length $n$ whose penultimate and last entries are $n$ and $1$, respectively.
First, we present some enumerative results on $Ln1$ permutations.
Then we show that the polytopes that arise from stack-sorting on $Ln1$ permutations are simplices and proceed to study their geometry and lattice-point enumeration.
In addition, we pose questions and problems for further investigation. 
Particular focus is then taken on the $Ln1$ permutation $23\cdots n1$.
We show that the convex hull of all its iterations through the stack-sorting algorithm shares the same lattice-point enumerator as that of the $(n-1)$-dimensional unit cube and lecture-hall simplex.
Lastly, we detail some results on the real lattice-point enumerator for variations of the simplices arising from stack-sorting on the permutation $23\cdots n1$.
This then allows us to show that those simplices are Gorenstein of index $2$. 
\end{abstract}


\maketitle


\section{Introduction}
The study of sorting a permutation using stacks was first introduced by Knuth in the 1960s \cite{knuth68}.
Classically, the aim of the stack-sorting problem is to chronologically sort a permutation using a last-in/first-out algorithm.
In its simplest form, a stack is used to rearrange a permutation $\boldsymbol{\pi}={\pi}_1{\pi}_2\cdots {\pi}_{n-1}{\pi}_n$ as follows.  
The entries of $\boldsymbol{\pi}$ are pushed onto an originally empty stack and an output permutation is formed by popping elements from the stack.
We present the stack-sorting algorithm in more detail in the next section.
Stack-sorting has been extensively studied in many contexts and connected to different objects, including permutrees \cite{permutrees}, pattern avoidance \cite{sorting-avoiding}, enumerative combinatorial perspectives \cite{defant, 132-avoiding, West}, and computational complexity \cite{complexity}.

In this paper we aim to connect the stack-sorting algorithm with polyhedral geometry. 
A well-known polytope whose vertices are indexed by permutations in the
symmetric group $\mathfrak{S}_n$ is the permutahedron. 
The permutahedron $\Pi_n$ is an $(n-1)$-dimensional polytope defined as 
\[\Pi_n:=\conv{\{\;(\pi_1,\pi_2,\dots,\pi_n)\in \R^n \;\vert\; \pi_1\pi_2\cdots\pi_n \in \mathfrak{S}_n\;\}}.  \]
Above and in the rest of the paper, we make no distinction between a permutation and its corresponding point in
$\R^n$.
The permutahedron was first referenced in the literature in \cite{PHS} and its subpolytopes have been studied through different approaches, including pattern-avoidance \cite{DavisSagan} and group actions and slices \cite{ArdilaSchindlerVindas, ArdilaSupinaVindas, BrandenburgDeLoeraMeroni}.
Due to both the stack-sorting algorithm and the permutahedron being intimately connected to permutations, we ask the following motivating question.

\begin{question}\label{question1}
What are the subpolytopes of the permutahedron that arise as the convex hull of permutations of length $n$ obtained from iterations of the stack-sorting algorithm?    
\end{question}

Answers to Question \ref{question1} allow for the exploration of the combinatorial and geometrical structures of these subpolytopes.  
In particular, the enumeration of its $k$-faces, volumes, and their lattice-point enumeration. 
Additionally, it can provide geometric intuition into how the stack-sorting algorithm traverses the faces of the permutahedron.

For a permutation $\boldsymbol{\pi} \in \mathfrak{S}_n$, we define
\[\mathcal{S}^{\boldsymbol{\pi}}:=\{ s^0(\boldsymbol{\pi}), s^1(\boldsymbol{\pi}), s^2(\boldsymbol{\pi}), \ldots \}, \]
to be the set of output permutations from each iteration of the stack-sorting algorithm until it terminates at outputting the identity permutation.
The polyhedral geometry comes into play by taking the convex hull of $\mathcal{S}^{\boldsymbol{\pi}}$. 
Note that this polytope is a subpolytope of the permutahedron since the vertices of the permutahedron are exacly the $n!$ permutations of $\{1,2,\dots,n\}$, which are the possible permutation inputs for the algorithm.
There are plenty of permutation patterns that one can study and we focus our attention on a particular permutation pattern, which we now present.
We call a permutation $\boldsymbol{\pi} \in \mathfrak{S}_n$ an $Ln1$ permutation if it has the form $Ln1$, where $L$ is a permutation of $\lbrace 2, 3, \ldots, n-1 \rbrace$.
We denote the set of all permutations of that form as $\mathcal{L}^n$. 
Though we primarily focus on $Ln1$ permutations, we encourage the study of the polytopes that arise from stack-sorting on different permutation patterns, e.g., $231$-avoiding permutations.
Additionally, one may study the polytopes that arise from other sorting algorithms including those that involve repeated numbers in the permutation.

This paper is organized as follows. 
In Section \ref{sec:prelims}, we present preliminaries and relevant background on the stack-sorting algorithm, polyhedral geometry, and Ehrhart theory. 
Section \ref{sec:Ln1 perms} presents enumerative and structural results on $Ln1$ permutations.    
In Section \ref{sec:geometry}, we initiate the study of the geometry of $\conv( \mathcal{S}^{\boldsymbol{\pi}})$, for $\boldsymbol{\pi} \in \mathcal{L}^n$. 
We further concentrate on the enumerative geometric combinatorics of the polytopes arising from our set up when $\boldsymbol{\tau}_n:= 23\cdots(n-1)n1\in \mathcal{L}^n$.
The main results of this section are the following: 

\begin{repproposition}{corr:simpl}
For $\boldsymbol{\pi}\in \mathcal{L}^n$, $\conv(\mathcal{S}^{\boldsymbol{\pi}})$ forms an $(n-1)$-simplex in $\R^n$. 
\end{repproposition}

\begin{reptheorem}{thm:hollow}
For $\boldsymbol{\tau}_n\in \mathcal{L}^n$, the $\mathcal{S}^{\boldsymbol{\tau}_n}$ simplex $\triangle_n:=\conv(\mathcal{S}^{\boldsymbol{\tau}_n})$ is hollow. 
Specifically, all non-vertex integer points of $\triangle_n$ lie on the facet formed from the convex hull of $\mathcal{S}^{\boldsymbol{\tau}_n} \setminus \{12\cdots n\}$. 
\end{reptheorem}

Lastly, Section \ref{sec:ehrhart} deals with the Ehrhart theory of the $\mathcal{S}^{\boldsymbol{\tau}_n}$ simplices. 
The main result of this section is the following: 

\begin{reptheorem}{thm:integral_equivalence}
    The $\mathcal{S}^{\boldsymbol{\tau}_n}$ simplex $\triangle_{n}$ and $(n-1)$-dimensional lecture-hall simplex $P_{n-1}$ are integrally equivalent. 
    In particular, \[L_\Z(\triangle_{n}; t) = L_\Z(P_{n-1}; t)=(t+1)^{n-1}.\]
\end{reptheorem}

We conclude by exploring the following additional results, including developing a recursive relationship and proving the Gorenstein property for translates of the $\mathcal{S}^{\boldsymbol{\tau}_n}$ simplex.



\begin{reptheorem}{thm:sssrecurrence}
For all $\lambda \in \R_{\geq 0}$,
  \[
  L_\R(\triangle_{n+1}- \boldsymbol{\tau}_{n+1};\lambda) = \sum_{k=0}^{\lfloor n\lambda\rfloor}\ L_\R \left(\triangle_n-\boldsymbol{\tau}_n;\frac{k}{n}\right).
  \]
\end{reptheorem}

\begin{reptheorem}{thm:gorenstein}
  For all $\lambda \in \R_{\geq 0}$,
  \[
    L_\R(\triangle_n - \boldsymbol{\tau}_n ; \lambda) = L_\R((\triangle_n - \boldsymbol{\tau}_n)^{\circ}; \lambda+2).
  \]
  Hence, any integer translate of $\triangle_n - \boldsymbol{\tau}_n$, in particular $\triangle_n$, is Gorenstein of index $2$.
\end{reptheorem}

\section{Background \& Preliminaries}\label{sec:prelims}
\subsection{The stack-sorting algorithm}\label{subsec:stacks} \text{} 

The following algorithm for sorting an input sequence was first outlined in Donald Knuth's influential work, \emph{The Art of Computer Programming} \cite{knuth68}.

\begin{definition}\label{def:ssa}
Given a permutation $\boldsymbol{\pi}=\pi_1\pi_2\dots\pi_n$ on an ordered set, the \defterm{stack-sorting algorithm} is defined as follows: 
      \begin{enumerate}
          \item Initialize an empty \defterm{stack} $S=\lbrace \rbrace$ (i.e., a last-in, first-out collection of elements).
          \item For each input value $\pi_i$ of $\boldsymbol{\pi}$, starting with the first element, $\pi_1$: 
          \begin{itemize} 
          \item If the stack is non-empty and $\pi_i$ is greater than the \defterm{top} (most-recently added) element $\pi_{j}$ of the stack, where $j<i$, then \defterm{pop $\pi_j$ from the stack} (i.e., move $\pi_j$ to the output) and repeat.
          \item Otherwise, \defterm{push $\pi_i$ onto the stack} (move $\pi_i$ to the top of the stack). 
          \end{itemize} 
          \item Once every entry of the permutation has been pushed and there is no input left to consider, pop all remaining in the stack from last-in to first-in.
      \end{enumerate}
      
  \end{definition}

\noindent For a permutation $\boldsymbol{\pi}$, we denote the output of the stack-sorting algorithm as $s(\boldsymbol{\pi})$. 

\begin{figure}[!ht]
    \centering
    \begin{tikzpicture}[scale=0.5]
        \node at (-1,0.5) {$(i)$};
        \draw[thick] (0,0)--(3,0)--(3,-3)--(4,-3)--(4,0)--(7,0);
        \draw (4,0) rectangle node {$2$} (5,1);
        \draw (5,0) rectangle node {$1$} (6,1);
        \draw (6,0) rectangle node {$3$} (7,1);
        \draw[thick, ->] (4,0.5) arc (90:180:0.5cm);
    \end{tikzpicture}
    \qquad
    \begin{tikzpicture}[scale=0.50]
        \node at (-1,0.5) {$(ii)$};
        \draw[thick] (0,0)--(3,0)--(3,-3)--(4,-3)--(4,0)--(7,0);
        \draw (3,-3) rectangle node {$2$} (4,-2);
        \draw (5,0) rectangle node {$1$} (6,1);
        \draw (6,0) rectangle node {$3$} (7,1);
        \draw[thick, ->] (5,0.5) arc (90:180:1.5cm);
    \end{tikzpicture}

    \begin{tikzpicture}[scale=0.50]
        \node at (-1,0.5) {$(iii)$};
        \draw[thick] (0,0)--(3,0)--(3,-3)--(4,-3)--(4,0)--(7,0);
        \draw (3,-3) rectangle node {$2$} (4,-2);
        \draw (3,-2) rectangle node {$1$} (4,-1);
        \draw (6,0) rectangle node {$3$} (7,1);
        \draw[thick, ->] (3.5,-1) arc (0:90:1.5cm);
    \end{tikzpicture}
    \qquad
    \begin{tikzpicture}[scale=0.5]
        \node at (-1,0.5) {$(iv)$};
        \draw[thick] (0,0)--(3,0)--(3,-3)--(4,-3)--(4,0)--(7,0);
        \draw (3,-3) rectangle node {$2$} (4,-2);
        \draw (0,0) rectangle node {$1$} (1,1);
        \draw (6,0) rectangle node {$3$} (7,1);
        \draw[thick, ->] (3.5,-2) arc (0:30:5cm);
    \end{tikzpicture}
    \caption{A visualization of step $2$ of the stack-sorting algorithm as presented in Example~\ref{ex:ssa}.}
\end{figure}

  \begin{example}\label{ex:ssa}
      The algorithm would sort the permutation $\boldsymbol{\pi} = 213$ as follows:
      \begin{enumerate}
          \item We begin with an empty stack $S=\lbrace \rbrace$.
          \item The first entry in the permutation is $\pi_1=2$.
          The stack is empty; thus, we push $2$ onto the stack and obtain $S=\lbrace 2 \rbrace$.
          \begin{itemize}
          \item The next entry is $1$.
          The stack is non-empty but $1$ is not greater the the top of the stack, $2$. Therefore, we push $1$ onto the stack and obtain $S = \lbrace 2,1 \rbrace$.
          \item The last permutation entry is $3$.
          The stack is non-empty and $3$ is greater than the top, $1$. Thus, we pop $1$ from the stack and obtain the first element of $s(\boldsymbol{\pi})$ to be $1$. We repeat and the exact same thing occurs with $2$. We deduce that $s(\boldsymbol{\pi})$ begins with $12$.
          \end{itemize}
      \item We push $3$ onto an empty stack and then pop it to obtain that $s(\boldsymbol{\pi}) = 123$. \\
      
      \hfill $\diamondsuit$
      \end{enumerate}
  \end{example}

  \begin{definition}\label{def:tss}
  A permutation $\boldsymbol{\pi}\in\mathfrak{S}_n$ is \defterm{$t$-stack-sortable} if $t$ iterations of the stack-sorting algorithm yields the identity permutation, i.e., $s^t(\boldsymbol{\pi}) = 12\cdots n$. 
  Furthermore, $\boldsymbol{\pi}$ is \emph{exactly} $t$-stack-sortable if $s^t(\boldsymbol{\pi}) = 12\cdots n$ and $m < t$ implies $s^m(\boldsymbol{\pi}) \neq 12\cdots n$.
  \end{definition}
  
  The identity permutation itself is defined to be $0$-stack-sortable, and $s(12\cdots n) = 12\cdots n$ for all $n$. 
  All permutations on $[n]$ will reach the identity after at most $n-1$ iterations of the algorithm.
  If $\boldsymbol{\pi}$ is exactly $(n-1)$-stack-sortable, then $\boldsymbol{\pi}$ is  said to be \defterm{maximal} with respect to the algorithm.
  Thus, revisiting Example~\ref{ex:ssa}, $\boldsymbol{\pi}=213$ is exactly $1$-stack-sortable.
  It is also $t$-stack-sortable for any $t \geq 1$, and thus is not maximal.

\subsection{Polyhedral geometry \& Ehrhart theory} \label{subsec:geom}\text{} 

In this paper, we investigate the properties of a family of \emph{convex polytopes}, in particular, a family of lattice simplices. 
We present some preliminaries on polyhedral geometry and Ehrhart theory, i.e., the study of lattice-points in dilations of polytopes.

A \defterm{convex polytope} $P$ is the \defterm{convex hull} of a set of points $C = \{ \bx_1, \ldots, \bx_k \}$, that is, 
\begin{equation}\label{eqn: convex hull} P =\conv(C) := \left\lbrace \sum_{i=1}^k \lambda_i \bx_i: \lambda_i \in \R_{\geq 0} \text{ and } \sum_{i=1}^k \lambda_i = 1 \right\rbrace.
\end{equation}
Any expression of the form $\sum_{i=1}^k \lambda_i \bx_i$, with the same conditions as imposed on the $\lambda_i$ in (\ref{eqn: convex hull}), is called a \defterm{convex combination} of the points $\bx_i$.
The convex hull of $C$ is the set of all convex combinations of points in $C$. 

We define a \defterm{hyperplane} in $\R^n$ as an $(n-1)$-dimensional affine subspace determined by the solution space of a linear equation \[a_0 + a_1x_1 + \cdots + a_nx_n = 0,\] where each $a_i \in \R$ and at least one of $a_1$ through $a_n$ is nonzero. 
Any hyperplane of $\R^n$ partitions $\R^n$ into two parts called \defterm{halfspaces}; namely, without loss of generality, \[
    a_0 + a_1x_1 + \cdots + a_nx_n \geq 0 \quad \text{ and } \quad a_0 + a_1x_1 + \cdots + a_nx_n < 0.\]
A polytope $P$ can also be defined as the bounded intersection of finitely many \defterm{halfspaces}.

A polytope $P$ is contained in the \emph{affine hull} of $C$, denoted by $\aff(C)$, which is similar to \eqref{eqn: convex hull}, but with the looser condition that $\lambda_i\in \R$. 
We can characterize $\aff(C)$ as the smallest affine subspace containing $\conv(C)$.
Moreover, $C =\{\bx_1,\ldots,\bx_k\}$ is \emph{affinely independent} if $\aff(C)$ is a $(k-1)$-dimensional space.
Equivalently, $C$ is affinely independent if and only if vectors, \[\bx_1 - \bx_i,\dots,\bx_{i-1} - \bx_i,\bx_{i+1} -\bx_i, \dots, \bx_k-\bx_i\ \in \R^n,\] 
are linearly independent in $\R^n$ for any $i \in [k]$. 

The \defterm{dimension} of a polytope $P$, denoted $\dim(P)$, is the dimension of its affine hull.
We say a polytope $P\subset \R^n$ is \emph{full-dimensional} if $\dim(P)=n$.
The convex hull of $n+1$ affinely independent points forms a \defterm{simplex} of dimension $n$.

Changing all inequalities to strict inequalities in the minimal halfspace description of $P$ yields the \defterm{interior} of $P$, denoted $P^{\circ}$. 
Changing all inequalities to equalities in the minimal halfspace description yields the \defterm{boundary} of $P$, denoted $\partial P$.
Note that $P = P^{\circ} \uplus \partial P$, where $\uplus$ denotes a disjoint union.
A polytope is \defterm{hollow} if all its integer points lie on its boundary or, equivalently, it has no integer points in its interior.
A polytope is said to be a \defterm{lattice polytope} if all of its vertices have integral coordinates.
One important lattice polytope is the permutahedron $\Pi_n$, i.e., the $(n-1)$-dimensional polytope defined as the convex hull of all permutations of the coordinates of $(1,2,\dots,n)\in\R^n$.
\begin{figure}[h!]
\centering
\begin{tikzpicture}%
	[scale=.7,
	edge/.style={color=black},
	facet/.style={fill=andresblue,fill opacity=0.700000}]
%
%

\draw[edge] (.86603, 1.50000) -- (0.00000, 2.00000);

\draw[andrespink,fill=andrespink] (0.86603, 1.50000) circle (2pt);
\node[anchor=west] at (0.86603, 1.50000) {\color{blue}$21$};
\coordinate (0.00000, 2.00000) at (0.00000, 2.00000);
\draw[andrespink,fill=andrespink] (0.00000, 2.00000) circle (2pt);
\node[anchor=south] at (0.00000, 2.00000) {\color{blue}$12$};




\end{tikzpicture}
\qquad
\begin{tikzpicture}%
	[scale=.7,
	edge/.style={color=black},
	facet/.style={fill=andresblue,fill opacity=0.300000}]
%
%
\coordinate (0.86603, -0.50000) at (0.86603, -0.50000);
\draw[andrespink,fill=andrespink] (0.86603, -0.50000) circle (2pt);
\node[anchor=west] at (0.86603, -0.50000) {\color{blue}$321$};
\coordinate (0.86603, 0.50000) at (0.86603, 0.50000);
\draw[andrespink,fill=andrespink] (0.86603, 0.50000) circle (2pt);
\node[anchor=west] at (0.86603, 0.50000) {\color{blue}$231$};
\coordinate (0.00000, -1.00000) at (0.00000, -1.00000);
\draw[andrespink,fill=andrespink]  (0.00000, -1.00000) circle (2pt);
\node[anchor=north] at (0.00000, -1.00000) {\color{blue}$312$};
\coordinate (0.00000, 1.00000) at (0.00000, 1.00000);
\draw[andrespink,fill=andrespink] (0.00000, 1.00000) circle (2pt);
\node[anchor=south] at (0.00000, 1.00000) {\color{blue}$132$};
\coordinate (-0.86603, -0.50000) at (-0.86603, -0.50000);
\draw[andrespink,fill=andrespink] (-0.86603, -0.50000) circle (2pt);
\node[anchor=east] at (-0.86603, -0.50000) {\color{blue}$213$};
\coordinate (-0.86603, 0.50000) at (-0.86603, 0.50000);
\draw[andrespink,fill=andrespink] (-0.86603, 0.50000) circle (2pt);
\node[anchor=east] at (-0.86603, 0.50000) {\color{blue}$123$};

\fill[facet] (-0.86603, 0.50000) -- (0.00000, 1.00000) -- (0.86603, 0.50000) -- (0.86603, -0.50000) -- (0.00000, -1.00000) -- (-0.86603, -0.50000) -- cycle {};

\coordinate (0.00000, 0.00000) at (0.00000, 1.00000);
\draw[andrespink,fill=andrespink] (0.00000, 0.00000) circle (2pt);
\node[anchor=south] at (0.00000, 0.00000) {\color{blue}$$};
\draw[edge] (0.86603, -0.50000) -- (0.86603, 0.50000);
\draw[edge] (0.86603, -0.50000) -- (0.00000, -1.00000);
\draw[edge] (0.86603, 0.50000) -- (0.00000, 1.00000);
\draw[edge] (0.00000, -1.00000) -- (-0.86603, -0.50000);
\draw[edge] (0.00000, 1.00000) -- (-0.86603, 0.50000);
\draw[edge] (-0.86603, -0.50000) -- (-0.86603, 0.50000);

\coordinate (0.86603, -0.50000) at (0.86603, -0.50000);
\draw[andrespink,fill=andrespink] (0.86603, -0.50000) circle (2pt);
\node[anchor=west] at (0.86603, -0.50000) {\color{blue}$321$};
\coordinate (0.86603, 0.50000) at (0.86603, 0.50000);
\draw[andrespink,fill=andrespink] (0.86603, 0.50000) circle (2pt);
\node[anchor=west] at (0.86603, 0.50000) {\color{blue}$231$};
\coordinate (0.00000, -1.00000) at (0.00000, -1.00000);
\draw[andrespink,fill=andrespink]  (0.00000, -1.00000) circle (2pt);
\node[anchor=north] at (0.00000, -1.00000) {\color{blue}$312$};
\coordinate (0.00000, 1.00000) at (0.00000, 1.00000);
\draw[andrespink,fill=andrespink] (0.00000, 1.00000) circle (2pt);
\node[anchor=south] at (0.00000, 1.00000) {\color{blue}$132$};
\coordinate (-0.86603, -0.50000) at (-0.86603, -0.50000);
\draw[andrespink,fill=andrespink] (-0.86603, -0.50000) circle (2pt);
\node[anchor=east] at (-0.86603, -0.50000) {\color{blue}$213$};
\coordinate (-0.86603, 0.50000) at (-0.86603, 0.50000);
\draw[andrespink,fill=andrespink] (-0.86603, 0.50000) circle (2pt);
\node[anchor=east] at (-0.86603, 0.50000) {\color{blue}$123$};

\end{tikzpicture}
\qquad
\begin{tikzpicture}%
	[scale=.5,
	x={(0.767968cm, 0.559570cm)},
	y={(-0.407418cm, 0.802202cm)},
	z={(0.494203cm, -0.208215cm)},
	back/.style={loosely dotted, thick},
	edge/.style={color=black, thick},
	facet/.style={fill=andresblue,fill opacity=0.3}]
%
%
\coordinate (-0.70711, -1.22474, -1.73205) at (-0.70711, -1.22474, -1.73205);
\coordinate (-0.70711, -2.04124, -0.57735) at (-0.70711, -2.04124, -0.57735);
\coordinate (-1.41421, 0.00000, -1.73205) at (-1.41421, 0.00000, -1.73205);
\coordinate (-1.41421, -1.63299, 0.57735) at (-1.41421, -1.63299, 0.57735);
\coordinate (-2.12132, 0.40825, -0.57735) at (-2.12132, 0.40825, -0.57735);
\coordinate (-2.12132, -0.40825, 0.57735) at (-2.12132, -0.40825, 0.57735);
\coordinate (0.70711, -1.22474, -1.73205) at (0.70711, -1.22474, -1.73205);
\coordinate (0.70711, -2.04124, -0.57735) at (0.70711, -2.04124, -0.57735);
\coordinate (-0.70711, 1.22474, -1.73205) at (-0.70711, 1.22474, -1.73205);
\coordinate (-0.70711, -1.22474, 1.73205) at (-0.70711, -1.22474, 1.73205);
\coordinate (-1.41421, 1.63299, -0.57735) at (-1.41421, 1.63299, -0.57735);
\coordinate (-1.41421, 0.00000, 1.73205) at (-1.41421, 0.00000, 1.73205);
\coordinate (1.41421, 0.00000, -1.73205) at (1.41421, 0.00000, -1.73205);
\coordinate (1.41421, -1.63299, 0.57735) at (1.41421, -1.63299, 0.57735);
\coordinate (0.70711, 1.22474, -1.73205) at (0.70711, 1.22474, -1.73205);
\coordinate (0.70711, -1.22474, 1.73205) at (0.70711, -1.22474, 1.73205);
\coordinate (-0.70711, 2.04124, 0.57735) at (-0.70711, 2.04124, 0.57735);
\coordinate (-0.70711, 1.22474, 1.73205) at (-0.70711, 1.22474, 1.73205);
\coordinate (2.12132, 0.40825, -0.57735) at (2.12132, 0.40825, -0.57735);
\coordinate (2.12132, -0.40825, 0.57735) at (2.12132, -0.40825, 0.57735);
\coordinate (1.41421, 1.63299, -0.57735) at (1.41421, 1.63299, -0.57735);
\coordinate (1.41421, 0.00000, 1.73205) at (1.41421, 0.00000, 1.73205);
\coordinate (0.70711, 2.04124, 0.57735) at (0.70711, 2.04124, 0.57735);
\coordinate (0.70711, 1.22474, 1.73205) at (0.70711, 1.22474, 1.73205);
\draw[edge,back] (-0.70711, -1.22474, -1.73205) -- (-0.70711, -2.04124, -0.57735);
\draw[edge,back] (-0.70711, -1.22474, -1.73205) -- (-1.41421, 0.00000, -1.73205);
\draw[edge,back] (-0.70711, -1.22474, -1.73205) -- (0.70711, -1.22474, -1.73205);
\draw[edge,back] (-0.70711, -2.04124, -0.57735) -- (-1.41421, -1.63299, 0.57735);
\draw[edge,back] (-0.70711, -2.04124, -0.57735) -- (0.70711, -2.04124, -0.57735);
\draw[edge,back] (-1.41421, 0.00000, -1.73205) -- (-2.12132, 0.40825, -0.57735);
\draw[edge,back] (-1.41421, 0.00000, -1.73205) -- (-0.70711, 1.22474, -1.73205);
\draw[edge,back] (0.70711, -1.22474, -1.73205) -- (0.70711, -2.04124, -0.57735);
\draw[edge,back] (0.70711, -1.22474, -1.73205) -- (1.41421, 0.00000, -1.73205);
\draw[edge,back] (0.70711, -2.04124, -0.57735) -- (1.41421, -1.63299, 0.57735);
\draw[edge,back] (1.41421, 0.00000, -1.73205) -- (0.70711, 1.22474, -1.73205);
\draw[edge,back] (1.41421, 0.00000, -1.73205) -- (2.12132, 0.40825, -0.57735);
\fill[facet] (1.41421, 0.00000, 1.73205) -- (0.70711, -1.22474, 1.73205) -- (1.41421, -1.63299, 0.57735) -- (2.12132, -0.40825, 0.57735) -- cycle {};
\fill[facet] (0.70711, 1.22474, 1.73205) -- (-0.70711, 1.22474, 1.73205) -- (-1.41421, 0.00000, 1.73205) -- (-0.70711, -1.22474, 1.73205) -- (0.70711, -1.22474, 1.73205) -- (1.41421, 0.00000, 1.73205) -- cycle {};
\fill[facet] (-1.41421, 0.00000, 1.73205) -- (-2.12132, -0.40825, 0.57735) -- (-1.41421, -1.63299, 0.57735) -- (-0.70711, -1.22474, 1.73205) -- cycle {};
\fill[facet] (-0.70711, 1.22474, 1.73205) -- (-1.41421, 0.00000, 1.73205) -- (-2.12132, -0.40825, 0.57735) -- (-2.12132, 0.40825, -0.57735) -- (-1.41421, 1.63299, -0.57735) -- (-0.70711, 2.04124, 0.57735) -- cycle {};
\fill[facet] (0.70711, 1.22474, 1.73205) -- (-0.70711, 1.22474, 1.73205) -- (-0.70711, 2.04124, 0.57735) -- (0.70711, 2.04124, 0.57735) -- cycle {};
\fill[facet] (0.70711, 2.04124, 0.57735) -- (-0.70711, 2.04124, 0.57735) -- (-1.41421, 1.63299, -0.57735) -- (-0.70711, 1.22474, -1.73205) -- (0.70711, 1.22474, -1.73205) -- (1.41421, 1.63299, -0.57735) -- cycle {};
\fill[facet] (0.70711, 1.22474, 1.73205) -- (1.41421, 0.00000, 1.73205) -- (2.12132, -0.40825, 0.57735) -- (2.12132, 0.40825, -0.57735) -- (1.41421, 1.63299, -0.57735) -- (0.70711, 2.04124, 0.57735) -- cycle {};
\draw[edge] (-1.41421, -1.63299, 0.57735) -- (-2.12132, -0.40825, 0.57735);
\draw[edge] (-1.41421, -1.63299, 0.57735) -- (-0.70711, -1.22474, 1.73205);
\draw[edge] (-2.12132, 0.40825, -0.57735) -- (-2.12132, -0.40825, 0.57735);
\draw[edge] (-2.12132, 0.40825, -0.57735) -- (-1.41421, 1.63299, -0.57735);
\draw[edge] (-2.12132, -0.40825, 0.57735) -- (-1.41421, 0.00000, 1.73205);
\draw[edge] (-0.70711, 1.22474, -1.73205) -- (-1.41421, 1.63299, -0.57735);
\draw[edge] (-0.70711, 1.22474, -1.73205) -- (0.70711, 1.22474, -1.73205);
\draw[edge] (-0.70711, -1.22474, 1.73205) -- (-1.41421, 0.00000, 1.73205);
\draw[edge] (-0.70711, -1.22474, 1.73205) -- (0.70711, -1.22474, 1.73205);
\draw[edge] (-1.41421, 1.63299, -0.57735) -- (-0.70711, 2.04124, 0.57735);
\draw[edge] (-1.41421, 0.00000, 1.73205) -- (-0.70711, 1.22474, 1.73205);
\draw[edge] (1.41421, -1.63299, 0.57735) -- (0.70711, -1.22474, 1.73205);
\draw[edge] (1.41421, -1.63299, 0.57735) -- (2.12132, -0.40825, 0.57735);
\draw[edge] (0.70711, 1.22474, -1.73205) -- (1.41421, 1.63299, -0.57735);
\draw[edge] (0.70711, -1.22474, 1.73205) -- (1.41421, 0.00000, 1.73205);
\draw[edge] (-0.70711, 2.04124, 0.57735) -- (-0.70711, 1.22474, 1.73205);
\draw[edge] (-0.70711, 2.04124, 0.57735) -- (0.70711, 2.04124, 0.57735);
\draw[edge] (-0.70711, 1.22474, 1.73205) -- (0.70711, 1.22474, 1.73205);
\draw[edge] (2.12132, 0.40825, -0.57735) -- (2.12132, -0.40825, 0.57735);
\draw[edge] (2.12132, 0.40825, -0.57735) -- (1.41421, 1.63299, -0.57735);
\draw[edge] (2.12132, -0.40825, 0.57735) -- (1.41421, 0.00000, 1.73205);
\draw[edge] (1.41421, 1.63299, -0.57735) -- (0.70711, 2.04124, 0.57735);
\draw[edge] (1.41421, 0.00000, 1.73205) -- (0.70711, 1.22474, 1.73205);
\draw[edge] (0.70711, 2.04124, 0.57735) -- (0.70711, 1.22474, 1.73205);
\end{tikzpicture}
 \caption{Left to Right: $\Pi_2\subset\R^2$, $\Pi_3\subset\R^3$, and $\Pi_4\subset\R^4$.}
    \label{fig:permutahedra}
\end{figure}

The \defterm{Ehrhart function} (or \defterm{lattice-point enumerator} or \defterm{discrete volume}) of any polytope $P$ is defined as
\begin{equation*}
    L_\mathbb{Z}(P;t): = |tP\cap\Z^n|,
\end{equation*}
where $tP=\lbrace t\bx:\ \bx \in P\rbrace$ and $t$ is an integer.
When $P$ is a lattice polytope, the Ehrhart function is a polynomial in $t$ of degree equal to the dimension of $P$, which is refered to as the \defterm{Ehrhart polynomial} \cite{Ehrhart}.
We can encode the information of an Ehrhart polynomial in a generating series to obtain the \defterm{Ehrhart series} of a polytope:
\begin{equation*}
    \Ehr_\mathbb{Z}(P;z) := 1 + \sum_{t \in \Z_{>0}} L_\Z(P;t) z^n = \frac{h_{\Z}^{*}(P;z)}{(1-z)^{n+1}},
\end{equation*}
where $n$ is the dimension of $P$ and $h^*_\Z(P;z) = 1 + h^*_1 z + \cdots + h^*_n z^n$ is a polynomial in $z$ of degree at most $n$ called the \defterm{$h^*$-polynomial} of $P$. 
The coefficients $h^*_i$ of this polynomial are nonnegative integers and have relevant combinatorial or geometric interpretations.
For example, the sum of all $h^*_i$ is the normalized volume of the polytope and $h^*_n$ equals the number of interior lattice points of the polytope.
The interested reader can consult \cite{br15} for proofs of these examples and more information.

A fundamental result in Ehrhart theory is known as \emph{Ehrhart-Macdonald reciprocity} \cite{Ehrhart, Macdonald}:
for any convex lattice polytope $P$ of dimension $n$:
\begin{equation*}
    L_\Z(P;-t)=(-1)^nL_\Z(P^{\circ};t).
\end{equation*}
This not only provides a relationship between a polytope and its interior, but also provides an interpretation for negative values of $t$.
Evaluating an Ehrhart polynomial at $-t$ gives the lattice count for the $t^{\text{th}}$ dilate of its interior up to a sign.

A lattice polytope $P \subset \R^d$ is said to be \defterm{Gorenstein of index $k$} if there exists a positive integer $k$ such that \begin{equation}\label{eq:gorenstein}
    L_\Z(P^\circ; k-1) = 0,\quad L_\Z(P^\circ; k) =1,\quad \text{ and } \quad L_\Z(P^\circ; t) = L_\Z(P; t-k),
\end{equation} for all integers $t > k$ \cite{br15}.

More generally, for any convex polytope $P \subseteq \mathbb{R}^n$ (i.e., $P$ has any vertices in $\R^n$), the real Ehrhart counting function for all dilates $\lambda \in \R_{\geq 0}$ is defined as:
\begin{equation*}
    L_\mathbb{R}(P;\lambda): = |\lambda P\cap\Z^n|.
\end{equation*}
For more on Ehrhart theory, one can consult \cite{br15} for an in-depth treatment of the material. 
A larger survey on the combinatorics of polytopes can be found in \cite{ziegler95}.
Additionally, one can learn more about rational Ehrhart theory from \cite{BeckEliaRehberg}.

\begin{example}\label{ex:unitcube}
\begin{figure}[!ht]
    \centering
    \begin{tikzpicture}[scale=.75]
      \draw[thick, ->] (0,0) -- (0,4);
      \draw[thick, ->] (0,0) -- (4,0);
      
     \draw (0,0) rectangle (3,3);
      \fill[nearly transparent, andresblue] (0,0) rectangle (3,3);

    \draw (0,0) rectangle (2,2);
      \fill[nearly transparent, blue] (0,0) rectangle (2,2);
      
      \draw (0,0) rectangle (1,1);
      \fill[nearly transparent, andresblue] (0,0) rectangle (1,1);

      \node at (0.5,0.5) {$\square^2$};
      \node at (1,1.5) {$2\square^2$};
      \node at (1.5,2.5) {$3\square^2$};

    \foreach \i in {0,...,3}
      \foreach \j in {0,...,3}{
        \draw[fill = andrespink] (\i,\j) circle(3pt);
      };
    \end{tikzpicture}
    \caption{The first three integral dilates of the two-dimensional unit cube $\square^2$.}
    \label{fig:my_label}
\end{figure}

We define the \defterm{$n$-dimensional unit cube} as the convex hull over all binary strings of length $n$, i.e., the binary strings form the vertex set.
The two-dimensional unit cube $\square^2$ contains $4$ points in its first dilate, $9$ in its second, $16$ in its third, etc. 
Without much difficulty, one can show directly or inductively that: $$L_{\Z}(\square^2; t) = \left|t\square^2 \cap \Z^2 \right| =(t+1)^2.$$
Analogous argumentation can be used to show $L_{\Z}(\square^3; t) = (t+1)^3$; and, in fact, that \[L_{\Z}(\square^n; t) = (t+1)^n. \]
We can encode the lattice-point enumerator in a generating function to obtain the Ehrhart series of $\square^n$: 
\[\sum_{t \in \Z_{\geq 0}} L_{\Z}(\square^n; t) z^t = \frac{\sum_{k=0}^{n-1} A(n,k) z^k}{(1-z)^{n-1}}, \]
where $A(n,k)$ denotes the Eulerian number, i.e., the number of permutations of $1$ through $n$ having exactly $k$ descents.
Further, by reciprocity, we have:
    \[L_\Z(\square^n; -t) = (1-t)^n = (-1)^n(t-1)^n = (-1)^n L_\Z({\square^n}^\circ; t),\]
and can conclude the interior of the $t$-dilate has exactly as many lattice points as the entirety of the $(t-2)$-dilate. 
For example, the interior of $3\square^2$ has $4$ lattice points, the same number as all of $\square^2$. 
We also notice that the interior of the first dilate of $\square^n$ is empty and the interior of the second dilate has exactly one integer point. 
These are the conditions from (\ref{eq:gorenstein}) for $k=2$, which implies $\square^n$ is Gorenstein of index 2, an important property that will appear again later in the paper. \hfill $\diamondsuit$
\end{example}

\subsection{Integral and unimodular equivalence}\label{subsec:equivalence}
\text{}

We recall the definition of a unimodular transformation.

\begin{definition}
    A \defterm{unimodular transformation} in $\R^n$ is a linear transformation $U$, i.e., a $n$ by $n$ matrix, with coefficients in $\Z$ such that $\det(U) = \pm 1$.
\end{definition}

The following result from \cite{hk10} provides a sufficient, but not necessary condition, for determining whether two polytopes have the same lattice-point count. 

\begin{proposition}
    If a linear transformation on a lattice polytope $P$ is unimodular, then it preserves the lattice.
\end{proposition}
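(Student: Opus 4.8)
The plan is to show that a unimodular transformation $U$ restricts to a bijection of the integer lattice $\Z^n$ onto itself, from which the statement follows at once. First I would record the easy inclusion: since $U$ is an $n \times n$ matrix with entries in $\Z$, for any $\bx \in \Z^n$ the vector $U\bx$ is an integer linear combination of the columns of $U$, hence lies in $\Z^n$. Thus $U(\Z^n) \subseteq \Z^n$, i.e., $U$ carries lattice points to lattice points.

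Next I would show the inverse map is also integral, which is where the condition $\det(U) = \pm 1$ (rather than merely $\det(U) \neq 0$) is used. By Cramer's rule, $U^{-1} = \det(U)^{-1}\,\mathrm{adj}(U)$, where the adjugate $\mathrm{adj}(U)$ has entries that are signed minors of $U$ and hence integers; since $\det(U) = \pm 1$, multiplying by $\det(U)^{-1}$ preserves integrality, so $U^{-1}$ is again an integer matrix. Applying the previous paragraph to $U^{-1}$ gives $U^{-1}(\Z^n) \subseteq \Z^n$.

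Combining the two inclusions yields $\Z^n = U\bigl(U^{-1}(\Z^n)\bigr) \subseteq U(\Z^n) \subseteq \Z^n$, so $U(\Z^n) = \Z^n$ and $U$ is a lattice-preserving bijection. In particular, if $P$ is a lattice polytope then the vertices of $U(P)$ are integral and, for each integer $t$, $U$ induces a bijection between $tP \cap \Z^n$ and $t\,U(P) \cap \Z^n$, so $L_\Z(U(P);t) = L_\Z(P;t)$. There is no genuine obstacle in this argument; the only point requiring care is that the divisibility step genuinely needs $\det(U)=\pm1$, since for a general nonsingular integer matrix $U^{-1}$ need not have integer entries and the lattice would only be mapped into, not onto, $\Z^n$.
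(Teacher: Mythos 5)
Your proof is correct and complete. Note that the paper itself does not prove this proposition at all --- it is quoted as a known fact from the cited reference --- so there is nothing to compare against except the standard argument, which is exactly what you give: integrality of $U$ gives $U(\Z^n)\subseteq\Z^n$, the adjugate formula together with $\det(U)=\pm 1$ gives integrality of $U^{-1}$ and hence the reverse inclusion, so $U$ is a bijection of $\Z^n$ onto itself and therefore restricts to a bijection $tP\cap\Z^n \leftrightarrow tU(P)\cap\Z^n$ for every $t$. You also correctly isolate the one place where unimodularity (as opposed to mere invertibility over $\R$) is genuinely needed.
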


In other words, the resulting polytope will have the same integer point count as $P$. However, not all polytopes that have equivalent lattice counts have a unimodular transformation between them. For example, if the two polytopes have square matrix representations (vertices as columns of a matrix), they must have the same determinant up to a sign for a unimodular transformation to exist between them. 
We now have an effective method to show two $n$-polytopes are integrally equivalent: we can search for a unimodular transformation from the lattice points of one polytope to the other. 

\begin{definition}\label{def:integralequiv}
Two lattice polytopes $P\subset \R^m$ and $Q\subset \R^n$ are \emph{integrally equivalent} if there exists an affine transformation $\varphi: \R^m \rightarrow \R^n$ whose restriction to $P$ preserves the lattice. 
In other words, $\varphi$ is a bijection \[P\cap \Z^m \longleftrightarrow Q\cap \Z^n.\]
In particular, $L_\Z(P; t) = L_\Z(Q; t)$ for all $t \in \N$.
\end{definition}

\subsection{Lecture-hall simplices}\label{subsec:lecture}
\text{} 

We begin this subsection by recalling the definition of lecture-hall partitions, which were studied in \cite{BousquetEriksson, CorteelLeeSavage}.
They were further studied in the context of lecture-hall simplices in \cite{BrightSavage}.

\begin{definition}
    A \defterm{lecture-hall partition} of length $n$ is a sequence $\lbrace \alpha_1, \ldots, \alpha_n \rbrace \in \Z^n$ such that \[ 0 \leq \alpha_1 \leq \frac{\alpha_2}{2} \leq \cdots \leq \frac{\alpha_n}{n}.\]
    We can construct the \defterm{lecture-hall simplex} of dimension $n$ as \[P_n := \conv\left\lbrace \boldsymbol{\alpha} \in \Z^n: 0 \leq \alpha_1 \leq \frac{\alpha_2}{2} \leq \cdots \leq \frac{\alpha_n}{n} \leq 1 \right\rbrace.\]
\end{definition}

Dilations of $P_n$ yield the result $tP_n = \conv\left\lbrace \boldsymbol{\alpha} \in \Z^n: 0 \leq \alpha_1 \leq \frac{\alpha_2}{2} \leq \cdots \leq \frac{\alpha_n}{n} \leq t \right\rbrace$.
We also have that $P_n \subset 2P_n \subset \cdots \subset tP_n$ because $(0,\ldots,0) \in P_n$.
This fixes a vertex and thus avoids any translation of the polytope away from previous dilates, an observation that will later become useful.

\begin{example}
    For $n=4$, we must have $0 \leq \alpha_1 \leq \cdots \leq \frac{\alpha_4}{4} \leq 1$. 
    The first points we can identify in $P_n$ are $(0,0,0,0)$ and $(1,2,3,4)$. 
    We can also take the four points $(0,0,0,n)$ for $n \in [4]$. 
    We have $(0,0,1,2)$, $(0,0,1,3)$, and $(0,0,1,4)$; $(0,0,2,3)$ and $(0,0,2,4)$; as well as $(0,0,3,4)$. 
    Finally, we can take $(0,1,2,3)$, $(0,1,2,4)$, $(0,1,3,4)$, and $(0,2,3,4)$. 
    
    This consists of a total of $16$ lattice points and these are the only points in the simplex in the first dilate. 
    Note, not all of these points are vertices, for example, $(0,0,0,2)$ will be on the line between the origin and $(0,0,0,4)$.
    This gives $L_\Z(P_4; 1) = 16 = 2^4 = (1+1)^4$.
    In fact, we will later find the distribution of points in the lecture-hall simplex and our $\mathcal{S}^{\boldsymbol{\tau}_n}$ simplex to be similar. \hfill $\diamondsuit$
\end{example}

In general, the vertex set for $P_n$ consists of exactly the $n+1$ affinely independent points:
$(0,0,0,\ldots,0,0), (0,0,0,\ldots,0,n), (0,0,0,\ldots,n-1,n), \ldots, (0,0,3,\ldots, n-1, n), (0,2,3\ldots,n-1,n),$ and $(1,2,3,\ldots,n-1,n).$
More information about $P_n$, including the proof of the following proposition can be found in \cite{BrightSavage}.

\begin{proposition}[Theorems 1 and 2.2,\cite{BrightSavage}]\label{prop:LHS}
    Let $P_n$ be the lecture-hall simplex of dimension $n$. \[L_\Z(P_n;t) = (t+1)^n.\]
\end{proposition}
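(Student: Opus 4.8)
The plan is to compute the Ehrhart series of $P_n$ from scratch and to recognize it as $\sum_{t\ge 0}(t+1)^n z^t = A_n(z)/(1-z)^{n+1}$, where $A_n(z) = \sum_{\sigma\in\mathfrak{S}_n} z^{\mathrm{des}(\sigma)}$ is the Eulerian polynomial; by Worpitzky's identity this is equivalent to the claimed $L_\Z(P_n;t) = (t+1)^n$. From the description of the dilates given above, $L_\Z(P_n;t) = \#\{\boldsymbol\alpha\in\Z^n : 0\le\alpha_1\le\alpha_2/2\le\cdots\le\alpha_n/n\le t\}$, so everything reduces to bounded lecture-hall partitions. Since $P_n$ is an $n$-simplex with the $n+1$ affinely independent vertices $v_0=\mathbf 0$ and $v_k=(0,\dots,0,\,n-k+1,\,n-k+2,\dots,n)$ for $1\le k\le n$, I would pass to the cone $C=\cone\{(v_0,1),\dots,(v_n,1)\}\subset\R^{n+1}$ and use the standard fact that, for a lattice simplex, $h^*_j(P_n)$ is the number of lattice points of height $j$ in the half-open parallelepiped $\Box=\{\sum_{k=0}^n\lambda_k(v_k,1):0\le\lambda_k<1\}$.

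The one computation I would actually carry out is that the $i$-th coordinate of $v_k$ equals $i$ when $k\ge n+1-i$ and $0$ otherwise. Consequently the $i$-th coordinate of $\sum_k\lambda_k v_k$ equals $iS_i$, where $S_i:=\lambda_{n+1-i}+\cdots+\lambda_n$, so $0=S_0\le S_1\le\cdots\le S_n$ with consecutive gaps $S_i-S_{i-1}=\lambda_{n+1-i}\in[0,1)$, and the height of the point is $\lambda_0+S_n$. Imposing integrality, a point of $\Box$ is a lattice point precisely when $iS_i\in\Z$ for every $i$ (then $\lambda_0=\lceil S_n\rceil-S_n$ is forced and the height is $\lceil S_n\rceil$). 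Writing $p_i:=iS_i\in\Z_{\ge0}$, this identifies $h^*_j(P_n)$ with the number of integer sequences $0=p_0,p_1,\dots,p_n$ satisfying $p_1=0$, $\tfrac{p_{i-1}}{i-1}\le\tfrac{p_i}{i}$, $\tfrac{p_i}{i}-\tfrac{p_{i-1}}{i-1}<1$, and $\lceil p_n/n\rceil=j$.

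The heart of the argument — and the step I expect to be the main obstacle — is to show that this count equals the Eulerian number $A(n,j)$, by producing a bijection between these bounded-gap lecture-hall sequences, graded by $\lceil p_n/n\rceil$, and the permutations of $[n]$, graded by number of descents, reading a descent of $\sigma$ at position $i$ from whether the rate $p_i/i$ crosses an integer; this is precisely the combinatorial content of Bright--Savage. Granting it, $\Ehr_\Z(P_n;z)=\sum_j A(n,j)z^j/(1-z)^{n+1}$, and extracting coefficients gives $L_\Z(P_n;t)=(t+1)^n$. Useful sanity checks along the way: $h^*_0=1=A(n,0)$ (forcing all $S_i=0$); $h^*_n=0$ since $n-1$ gaps each $<1$ cannot sum past $n-1$, so $P_n$ is hollow, matching Theorem~\ref{thm:hollow}; and $\sum_j h^*_j=\lvert\det[v_1\mid\cdots\mid v_n]\rvert=n!=\sum_j A(n,j)$. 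An alternative route, closer to the style of Theorem~\ref{thm:sssrecurrence}, is to induct on $n$ through the slicing recurrence $L_\R(P_n;\lambda)=\sum_{m=0}^{\lfloor n\lambda\rfloor}L_\R(P_{n-1};m/n)$, but this forces one to carry along the full real Ehrhart (quasi-polynomial) function of $P_{n-1}$, so the $h^*$-computation seems preferable.
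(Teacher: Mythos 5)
First, a point of comparison: the paper does not prove this proposition at all --- it is imported by citation (``Theorems 1 and 2.2'' of Bright--Savage), so there is no internal argument to measure yours against. Judged on its own terms, everything you actually compute is correct: the vertex coordinates, the identification of the $i$-th coordinate of a parallelepiped point with $iS_i$, the fact that $\lambda_0=\lceil S_n\rceil-S_n$ is forced once all $iS_i\in\Z$, the translation of $h^*_j(P_n)$ into a count of bounded-gap integer sequences $p_i=iS_i$, and the sanity checks $h^*_0=1$, $h^*_n=0$, $\sum_j h^*_j=n!$.

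The problem is that the proof stops exactly where the theorem begins. The identity $h^*_j(P_n)=A(n,j)$ --- equivalently, the descent-reading bijection between your sequences graded by $\lceil p_n/n\rceil$ and permutations of $[n]$ graded by descents --- is not a deferrable technicality; it \emph{is} the content of the result, and ``granting it'' leaves the proposition unproved. What you have carried out is the standard, essentially mechanical reduction of the Ehrhart series of a lattice simplex to a lattice-point count in the half-open fundamental parallelepiped; the combinatorial evaluation of that count is precisely what Bright--Savage supply, so your proposal is a correct reduction wrapped around an unproved core. Two ways to close the gap: (i) actually construct the descent bijection you gesture at (this takes genuine work --- one must show that the positions $i$ where $p_i/i$ crosses a new integer can be matched with descent sets, with the correct multiplicities); or (ii) sidestep Eulerian numbers altogether by exhibiting a direct bijection $tP_n\cap\Z^n\leftrightarrow\{0,1,\dots,t\}^n$, which yields $(t+1)^n$ immediately and is the route of Bright--Savage's Theorem 1 (their Theorem 2.2 being the $h^*$/Eulerian refinement you are reconstructing). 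Your dismissal of the slicing recurrence is reasonable: as you note, conditioning on $\alpha_n=m$ forces you into rational dilates of $P_{n-1}$, so that route needs the full real Ehrhart function and is no easier.
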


Observe that the lattice-point enumerator is equivalent for both the lecture-hall simplex $P_n$ and the $n$-dimensional unit cube  $\square^n$ from Example~\ref{ex:unitcube}. 
We will show that our $\mathcal{S}^{\boldsymbol{\tau}_n}$ simplices from Section \ref{sec:geometry} are also in bijection with these polytopes, and hence have the same lattice-point-enumerator.

\subsection{Notation}\label{subsec:notation}
\text{} 

 We conclude our preliminaries with a notation table for reference. 
Note that it includes notation for terms that will be defined in later sections.

\begin{table}[ht]
    \centering
    \caption{Notation Table}
    \begin{tabular}{|c|c|}
      \hline
        \textbf{Symbol} & \textbf{Meaning} \\ \hline
        $s^i(\boldsymbol{\pi})$ & the $i^{th}$ output from stack-sorting a permutation $\boldsymbol{\pi}$ \\ \hline
        $P^{\circ}$ & the interior of the polytope $P$ \\ \hline
        $L_\Z(P;t)$ & the number of lattice points in $tP$ where $t \in \Z$ \\ \hline
        $\Ehr_\Z(P;z)$ & the ordinary generating function for $L_\Z(P;t)$ \\ \hline
        $h^*_\Z(P;z)$ & the polynomial numerator in the rational form of $\Ehr_\Z(P;z)$ \\ \hline
        $\square^n$ & the $n$-dimensional unit cube \\ \hline
        $P_n$ & the $n$-dimensional lecture-hall simplex \\ \hline
        $\mathcal{L}^n$ & the set of permutations on $[n]$ ending in $n1$ \\ \hline
        $\boldsymbol{\tau}_n$ & the permutation $ 23\cdots n1$ \\ \hline
        $\mathcal{S}^{\boldsymbol{\pi}}$ & the set $\{ s^0(\boldsymbol{\pi}), s^1(\boldsymbol{\pi}), s^2(\boldsymbol{\pi}), \ldots \}$ \\ \hline 
        $\triangle_n$ & the $(n-1)$-dimensional  $\mathcal{S}^{\boldsymbol{\tau}_n}$ simplex  \\ \hline
        $\mathbf{e}$ & the permutation $12\cdots n $  \\ \hline
        $P'$ & the projection of $P \subset \R^n$ onto the hyperplane defined by $x_n = 0$ \\ \hline
       \rule{0pt}{3ex} $\overline{P}$ & the projection of $P \subset \R^n$ into $\R^{n-1}$ by removing each point's last coordinate \\ \hline
        $\rule{0pt}{3ex}\hat{P}$ & the lift of $P \subset \R^n$ into $\R^{n+1}$ by appending a last coordinate of $0$ to each point \\
      \hline
    \end{tabular}
    \label{tab:notation}
\end{table}


\section{Stack-Sorting on \texorpdfstring{$Ln1$}- permutations} \label{sec:Ln1 perms}
In this section we explore the behavior of the stack-sorting algorithm on a special permutation form, one we will find to act in an interesting way with respect to the algorithm.
While the theory of stack-sorting algorithms is rich and comprehensive, this makes parsing through the vast literature to find known results a challenge. 
For self-containment, we include our proofs of the statements that follow, with appropriate references and credit when possible. 
  
\begin{definition} \label{def:Ln1}
Let $\mathcal{L}^n$ denote the set of all permutations of the form $Ln1$, where $L$ is a permutation of $\lbrace 2, 3, \ldots, n-1 \rbrace$.
That is, each element of $\mathcal{L}^n$ is a permutation that ends with $n$ and then $1$. 
\end{definition}
  
\noindent Notice that there are $(n-2)!$ distinct permutations in $\mathcal{L}^n$ as they can be counted by the number of different possibilities for $L$. 
We take special note of the permutation $\tau_n:=23\cdots n1\in \mathcal{L}^n$ in forthcoming results. 
  
\begin{example}
For $n=4$, $2341$ and $3241$ are the two permutations that comprise the set $\mathcal{L}^n$.
\end{example}
  
  \begin{lemma}\label{lem:bonasurvey1.2}
      Let $\boldsymbol{\pi} = \pi_1\pi_2 \cdots \pi_n$ be a permutation of $n$ distinct ordered elements and let $x= \max\lbrace \pi_1,\pi_2, \dots, \pi_n\rbrace$. 
      If $L$ and $R$ are the sub-permutations of $\boldsymbol{\pi}$ such that $\boldsymbol{\pi} = LxR$, then the stack-sorting algorithm yields $s(\boldsymbol{\pi}) = s(L)s(R)x$. 
  \end{lemma}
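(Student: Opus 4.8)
The plan is to trace the execution of the stack-sorting algorithm on $\boldsymbol{\pi} = LxR$ and argue that the maximum entry $x$ acts as a natural ``barrier'' that segregates the processing of $L$ from the processing of $R$. First I would observe that while the entries of $L$ are being read, the element $x$ has not yet entered the stack, so the algorithm's behavior on these entries is identical to its behavior when run on $L$ alone: whatever sequence of pushes and pops occurs in processing $L$ inside $\boldsymbol{\pi}$ is exactly the sequence that occurs when sorting $L$ by itself. Crucially, I claim that just before $x$ is read, the stack must be empty. The reason is that $x$ is larger than every entry that has been pushed so far, so as soon as $x$ is compared against the top of the stack, it triggers a pop; this repeats until the stack is exhausted. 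Hence all of $L$'s entries are popped to the output before $x$ is pushed, and the output so far is precisely $s(L)$.

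Next I would push $x$ onto the now-empty stack and begin reading the entries of $R$. Since $x$ is the global maximum, no entry of $R$ is ever greater than $x$, so $x$ is never popped until the very end of the algorithm (step 3, when the stack is drained). Therefore $x$ sits immobile at the bottom of the stack throughout the processing of $R$, and from the point of view of the entries of $R$, the stack behaves exactly as a fresh empty stack would: each entry of $R$ is pushed and popped according to the same rules, producing output $s(R)$. Finally, after the last entry of $R$ has been handled and the stack has been drained of $R$'s residual entries, the only element remaining is $x$, which is popped last. Concatenating the three phases gives output $s(L)\,s(R)\,x$, as claimed.

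The one point that deserves care — and which I expect to be the main (minor) obstacle — is the claim that the stack is empty immediately before $x$ is read and that $x$ is the last element out. Both follow from the simple invariant that the stack contents, read from bottom to top, are always strictly decreasing (this is immediate from the push/pop rule: we push $\pi_i$ only when it is not larger than the current top, so it becomes the new, smaller top). Given this invariant, any element strictly larger than the current bottom of the stack forces every stack element to be popped before it can be pushed; applying this to $x$, the global maximum, yields emptiness before $x$ enters, and the same largeness guarantees $x$ is never dislodged by any later entry. I would state this decreasing-stack invariant explicitly (perhaps as a one-line observation) since it is the engine behind the whole argument, and then the decomposition $s(\boldsymbol{\pi}) = s(L)s(R)x$ drops out.
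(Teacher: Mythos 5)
Your proposal is correct and follows essentially the same route as the paper's proof: $x$, being the maximum, forces the stack to empty before it is pushed (yielding $s(L)$ first), and then sits at the bottom untouched while $R$ is sorted, emerging last. Your explicit decreasing-stack invariant and the observation that $L$'s processing is unaffected by what follows are slightly more careful than the paper's phrasing, but the underlying argument is the same.
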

  
  This result is a generalization of \cite[Lemma 1.2]{bona03}. 
  In other words, the algorithm sorts everything to the left of $x$ (the maximum element) first, regardless of the contents to its right.
  Then, everything to the right of $x$ is sorted and $x$ takes the $n^{\text{th}}$ and final place in the output permutation.

  \begin{proof}
    Consider an arbitrary $\pi_i \in L$. Since $x \not\in L$ and thus $\pi_i \neq x$, we must have $\pi_i < x$.
    This implies that when all of $L$ has been pushed on the stack and $x$ is taken as input by the algorithm, all of $L$ will be popped \emph{before} $x$ is pushed on the stack. 
    After all of $L$ has been popped, $x$ will be pushed onto an empty stack.
    
    Similarly, it can be determined that if $\pi_j \in R$, then $\pi_j < x$.
    Because there is no element greater than $x$ that can pop it from the stack, all of $R$ will be pushed and then subsequently popped in some order before $x$ (at the bottom) is finally popped. 
    Hence, the algorithm will sort $R$ entirely into $s(R)$ before popping $x$; by combining these two results, we obtain that $s(\boldsymbol{\pi}) = s(L)s(R)x$.
  \end{proof}

As a consequence of this lemma, the next corollary follows. 
  
  \begin{corollary} \label{corr:firstiter}
      If $\boldsymbol{\pi} \in \mathcal{L}^n$, then $s(\boldsymbol{\pi}) = s(L)1n$.
Furthermore, $s(\boldsymbol{\pi})$ exactly ends with $n-1$, $1$, then $n$.
\end{corollary}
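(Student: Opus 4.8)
The plan is to obtain both assertions as immediate applications of Lemma~\ref{lem:bonasurvey1.2}. First I would apply that lemma to $\boldsymbol{\pi}$ itself: since $\boldsymbol{\pi}\in\mathcal{L}^n$ has the form $Ln1$, its largest entry is $n$, sitting in the penultimate position, so in the notation of the lemma we may write $\boldsymbol{\pi}=LxR$ with $x=n$ and $R=1$ (the one-element sequence). The lemma then yields $s(\boldsymbol{\pi})=s(L)\,s(R)\,n$, and since a single-element sequence is fixed by the stack-sorting algorithm, $s(R)=s(1)=1$. Hence $s(\boldsymbol{\pi})=s(L)1n$, which is the first assertion.

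For the ``furthermore'' part I would apply Lemma~\ref{lem:bonasurvey1.2} a second time, now to the sub-permutation $L$ alone. Assuming $n\geq 3$ (the case $n=2$ being the trivial $\boldsymbol{\pi}=21$, with $L$ empty), $L$ is a nonempty permutation of $\{2,3,\ldots,n-1\}$, so its maximum entry is $n-1$. Writing $L=L'(n-1)L''$ and invoking the lemma gives $s(L)=s(L')\,s(L'')\,(n-1)$, so the last entry of $s(L)$ is $n-1$. Substituting into $s(\boldsymbol{\pi})=s(L)1n$ from the first part shows that $s(\boldsymbol{\pi})$ ends in $n-1$, then $1$, then $n$; since these three values are pairwise distinct for $n\geq 3$, these are literally its last three entries, as claimed.

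I do not anticipate a genuine obstacle here — the statement is a direct corollary of the preceding lemma. The only points requiring a little care are bookkeeping ones: noting that $s$ fixes a one-element sequence, isolating the edge case $n=2$ so that $L$ is actually nonempty when the lemma is applied the second time, and confirming that $n-1$, $1$, $n$ are pairwise distinct so that ``exactly ends with'' is literally accurate.
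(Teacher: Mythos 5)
Your argument is correct and is exactly the route the paper intends: the paper simply asserts that the corollary is "a consequence of" Lemma~\ref{lem:bonasurvey1.2}, and your two applications of that lemma (first to $\boldsymbol{\pi}=Ln1$ with $x=n$, $R=1$, then to $L$ with maximum $n-1$) supply the details it leaves implicit. The bookkeeping points you flag ($s$ fixing a singleton, the $n=2$ edge case) are handled appropriately.
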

\noindent Note that the previous corollary can be readily extended to a biconditional.
  
\begin{theorem} \label{thm:iterform}
Let $\boldsymbol{\pi} \in \mathcal{L}^n$. 
For all $i \in [n-2]$, $s^i(\boldsymbol{\pi})$ exactly ends with \[(n-i)1(n-i+1)\ldots(n-2)(n-1)n.\]
\end{theorem}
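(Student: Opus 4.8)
The plan is to prove the statement by induction on $i$, with Corollary~\ref{corr:firstiter} supplying the base case $i=1$ (there the asserted suffix is $(n-1)\,1\,n$) and Lemma~\ref{lem:bonasurvey1.2} doing all the work in the inductive step.

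For the step, fix $i$ with $1\le i\le n-3$ and suppose $\sigma:=s^i(\boldsymbol{\pi})$ ends with $(n-i)\,1\,(n-i+1)(n-i+2)\cdots(n-1)\,n$. The first thing I would note is that, since $\sigma$ is a permutation of $[n]$ whose last $i+2$ entries are exactly $\{1,n-i,n-i+1,\dots,n\}$, its remaining entries form a permutation $\sigma'$ of $\{2,3,\dots,n-i-1\}$, which is nonempty precisely because $i\le n-3$; hence $\sigma=\sigma'\,(n-i)\,1\,(n-i+1)\,(n-i+2)\cdots(n-1)\,n$. Next, I would strip off the increasing tail: by Lemma~\ref{lem:bonasurvey1.2} one has $s(A\cdot w)=s(A)\cdot w$ whenever $w$ is an increasing word all of whose letters exceed every letter of $A$ (a short downward induction on the length of $w$), so taking $A=\sigma'\,(n-i)\,1$ and $w=(n-i+1)(n-i+2)\cdots(n-1)\,n$ gives
\[
s(\sigma)=s\!\left(\sigma'\,(n-i)\,1\right)(n-i+1)(n-i+2)\cdots(n-1)\,n.
\]
A further application of Lemma~\ref{lem:bonasurvey1.2} to $\sigma'\,(n-i)\,1$ (with $L=\sigma'$, $x=n-i$, $R=1$) yields $s\!\left(\sigma'\,(n-i)\,1\right)=s(\sigma')\cdot 1\cdot(n-i)$, and one last application to the nonempty word $\sigma'$, whose maximum is $n-i-1$, shows $s(\sigma')$ ends with $n-i-1$. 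Assembling these, $s^{i+1}(\boldsymbol{\pi})=s(\sigma)$ ends with $(n-i-1)\,1\,(n-i)(n-i+1)\cdots(n-1)\,n$, which is exactly the claimed block with $i$ replaced by $i+1$; since the step is valid for every $i\le n-3$, this proves the statement for all $i\in[n-2]$.

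Essentially everything here is index bookkeeping, and I do not anticipate a genuine obstacle; the two spots that need a little care are (i) establishing the auxiliary peeling identity $s(A\cdot w)=s(A)\cdot w$ cleanly from Lemma~\ref{lem:bonasurvey1.2}, and (ii) checking that the prefix $\sigma'$ really is a permutation of $\{2,\dots,n-i-1\}$ so that its largest letter, and hence the last letter of $s(\sigma')$, is $n-i-1$. The only degenerate case, $\sigma'=\emptyset$, corresponds to $i=n-2$, which is the endpoint of the induction rather than a case in the step; there the assertion just says $s^{n-2}(\boldsymbol{\pi})=2\,1\,3\,4\cdots n$.
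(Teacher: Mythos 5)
Your proposal is correct and follows essentially the same route as the paper: induction on $i$ with Corollary~\ref{corr:firstiter} as the base case and Lemma~\ref{lem:bonasurvey1.2} driving the inductive step. Your version is slightly more systematic in that it peels off the increasing tail by iterated applications of the lemma, where the paper reasons about the stack directly for part of the step, but the underlying argument is the same.
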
 
  
\begin{proof}
The base case when $i=1$ is the statement of Corollary~\ref{corr:firstiter}.
Assume the theorem holds for $s^{i-1}(\boldsymbol{\pi})$, i.e., it begins with some permutation $\boldsymbol{\rho}$ of $\lbrace 2,3,\ldots,n-i \rbrace$ and ends exactly with $(n-i+1)1(n-i+2)\ldots(n-1)n$. 
      
Consider another application of the stack-sorting algorithm to $s^{i-1}(\boldsymbol{\pi})$. 
Notice that $(n-i+1)$ is greater than all elements in $\boldsymbol{\rho}$.
Thus, when $(n-i+1)$ is considered by the algorithm, all elements in $\boldsymbol{\rho}$ will be popped before $(n-i+1)$ is pushed.
By Lemma~\ref{lem:bonasurvey1.2}, we know the final element in $s(\boldsymbol{\rho})$ will be $n-i$. 
Now, the algorithm sorts $(n-i+1)1(n-i+2)\ldots(n-1)n$ into $1(n-i+1)(n-i+2)\ldots(n-1)n$.

Therefore, we obtain that $s^i(\boldsymbol{\pi})$ exactly ends with $(n-i)1(n-i+1)\ldots(n-1)n$ as desired. 
Further, note that when $i=n-1$ we finally obtain the identity permutation after the maximal number of iterations.
\end{proof}

\begin{example}\label{ex:iterform}
For $n=6$, Theorem \ref{thm:iterform} tells us that $\boldsymbol{\pi} \in \mathcal{L}^6$ ends with $61$, as expected; $s(\boldsymbol{\pi})$ ends with $516$, as stated in Corollary~\ref{corr:firstiter}. 
Further, $s^2(\boldsymbol{\pi})$ ends with $4156$, $s^3(\boldsymbol{\pi})$ ends with $31456$, and thus $s^3(\boldsymbol{\pi}) = 231456$. 
We conclude $s^4(\boldsymbol{\pi}) = 213456$ and $s^5(\boldsymbol{\pi}) = 123456$.
\end{example}

\begin{remark}\label{rem:iterform}
For all $n\geq 4$, if $\boldsymbol{\pi} \in \mathcal{L}^n$, then $s^{n-3}(\boldsymbol{\pi})=2314\cdots n$, $s^{n-2}(\boldsymbol{\pi})=213\cdots n$, and of course $s^{n-1}(\boldsymbol{\pi}) = 123\cdots n$. 
\end{remark}
  
This tells us that regardless of what $\boldsymbol{\pi} \in \mathcal{L}^n$ is, the final three iterations of the algorithm are all the same. 
The permutations ``meet" at this point. 
However, before those last few iterations, we cannot explicitly describe what $s^i(\boldsymbol{\pi})$ will start with without specifying $L$. 
Specifying $L=23\cdots(n-1)$ yields the following useful result.
  
\begin{corollary} \label{corr:iterform}
For $\boldsymbol{\tau}_n\in \mathcal{L}^n$ and $i \in [n-1]$, 
\[s^i(\boldsymbol{\tau}) = 23\cdots(n-i)1(n-i+1)\cdots(n-1)n.\]
\end{corollary}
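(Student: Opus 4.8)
The plan is to prove the identity by induction on $i$, using Corollary~\ref{corr:firstiter} for the base case and Lemma~\ref{lem:bonasurvey1.2} for the inductive step, while exploiting the crucial feature that $L = 23\cdots(n-1)$ is already increasing. For $i=1$, Corollary~\ref{corr:firstiter} gives $s(\boldsymbol{\tau}_n) = s(L)1n$ with $L = 23\cdots(n-1)$; since the stack-sorting algorithm returns any increasing permutation unchanged, $s(L) = 23\cdots(n-1)$, and hence $s(\boldsymbol{\tau}_n) = 23\cdots(n-1)1n$, which is the claimed expression at $i=1$ (with the degenerate middle segment $(n-i+1)\cdots(n-1)$ read as empty when $n-i\ge n-1$).

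For the inductive step, I would assume $s^{i-1}(\boldsymbol{\tau}_n) = 23\cdots(n-i+1)1(n-i+2)\cdots(n-1)n$ and apply one more pass of the algorithm. Writing $m = n-i+1$, this permutation has the shape $\sigma = 2,3,\ldots,m,1,m+1,\ldots,n$: an increasing run up to $m$, then the entry $1$, then an increasing run from $m+1$ to $n$. One can finish in either of two ways. Directly tracing the algorithm: the run $2,\ldots,m$ is flushed in consecutive pairs leaving $m$ alone on the stack, then $1$ is pushed on top of $m$, and finally the tail $m+1,\ldots,n$ pops $1$, then $m$, then flushes itself, so the output is $2,\ldots,m-1,1,m,m+1,\ldots,n$. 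Alternatively, apply Lemma~\ref{lem:bonasurvey1.2} repeatedly: peel off the trailing maximum $n-1$, then $n-2$, and so on down to $m+1$ (each is, at its stage, both the maximum and the last entry of the remaining prefix, precisely because $m+1,\ldots,n$ form an increasing suffix), reducing the computation to $s(23\cdots m\,1)$; one further application of the lemma with $x=m$ gives $s(23\cdots m\,1) = s(23\cdots(m-1))\,s(1)\,m = 23\cdots(m-1)1m$, and reassembling the peeled entries yields the same output $2,\ldots,m-1,1,m,\ldots,n$. Substituting back $m = n-i+1$, so $m-1 = n-i$, turns this into $23\cdots(n-i)1(n-i+1)\cdots(n-1)n$, completing the induction; the cases $i=n-2$ and $i=n-1$ then match Remark~\ref{rem:iterform}, with the prefix $23\cdots(n-i)$ read as the empty word when $n-i\le 1$.

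I do not anticipate a genuine obstacle here; this is a routine consequence of the earlier results, and the only care needed is index bookkeeping. The one point worth stating explicitly is the justification that each step of the repeated application of Lemma~\ref{lem:bonasurvey1.2} is legitimate, i.e.\ that the entry being extracted is simultaneously the maximum of the current prefix and occupies its final position; this holds exactly because the large entries $m+1,\ldots,n$ sit at the end of $s^{i-1}(\boldsymbol{\tau}_n)$ in increasing order. (If one prefers to avoid the induction on the prefix altogether, an alternative is to quote Theorem~\ref{thm:iterform} for the guaranteed suffix $(n-i)1(n-i+1)\cdots n$ and only prove by induction that the remaining prefix is the increasing word $23\cdots(n-i-1)$, treating $i=n-1$ separately via $s(12\cdots n)=12\cdots n$; the computation involved is the same.)
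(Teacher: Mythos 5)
Your proof is correct, and it follows essentially the same route as the paper: the paper obtains the corollary from Theorem~\ref{thm:iterform} (which pins down the suffix $(n-i)1(n-i+1)\cdots n$) together with the observation that an increasing prefix is fixed by $s$, which is exactly the alternative you describe at the end and is also the engine of your direct induction. Your main argument is just a more self-contained version of the same idea, with the index bookkeeping handled correctly at the degenerate endpoints $i=1$ and $i=n-1$.
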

  
Inputting any sequence in ascending order will have the stack-sorting algorithm output the original sequence.
Thus, the initial segment of the permutation will always be $23\cdots(n-i-1)$.
We not only learn about the maximality of $Ln1$, but can also enumerate the general form of each algorithm iteration in the special case of $\boldsymbol{\tau}_n\in \mathcal{L}^n$. 
  
\begin{definition}
For a permutation $\boldsymbol{\pi} \in \mathfrak{S}_n$, we define
\[\mathcal{S}^{\boldsymbol{\pi}}:=\{ s^0(\boldsymbol{\pi}), s^1(\boldsymbol{\pi}), s^2(\boldsymbol{\pi}), \ldots \}, \]
to be the set of output permutations from each iteration of the stack-sorting algorithm until it terminates at outputting the identity permutation.
\end{definition} 


\begin{example}
Consider $\boldsymbol{\tau_5}=23451\in \mathcal{L}^5$. 
By Corollary~\ref{corr:iterform} we obtain that $s(\boldsymbol{\tau}) = 23415$, $s^2(\boldsymbol{\tau}) = 23145$, $s^3(\boldsymbol{\tau}) = 21345$, and $s^4(\boldsymbol{\tau}) = 12345$.
Notice that $\boldsymbol{\tau}$ is maximal and \[\mathcal{S}^{\boldsymbol{\tau}} = \lbrace 23451, 23415, 23145, 21345, 12345\rbrace.\]
\end{example}

The following theorem appears as Theorem 4.2.4 in Julian West's PhD \cite{West}, which tells us that maximal permutations and $Ln1$ permutations are equivalent. 
We provide an alternate proof here. 
  
\begin{theorem} \label{thm:ssortable}
A permutation $\boldsymbol{\pi} \in \mathcal{L}^n$ if and only if $\boldsymbol{\pi}$ is exactly $(n - 1)$-stack sortable. 
\end{theorem}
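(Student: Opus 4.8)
The plan is to prove the two implications separately: the forward direction is essentially bookkeeping on the structural results already in hand, while the reverse direction will be an induction on $n$.

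For the forward direction, suppose $\boldsymbol{\pi} \in \mathcal{L}^n$. Theorem~\ref{thm:iterform} tells us exactly what the tail of each iterate looks like: for $1 \le i \le n-2$ the permutation $s^i(\boldsymbol{\pi})$ ends with $(n-i)1(n-i+1)\cdots n$, and in particular $s^{n-2}(\boldsymbol{\pi}) = 213\cdots n$, a word of full length $n$. From these explicit tails one reads off that $s^i(\boldsymbol{\pi}) \ne 12\cdots n$ for every $i \le n-2$ — the entry $1$ never occupies the first coordinate, since for $i \le n-3$ the coordinates preceding the displayed tail form a permutation of $\{2,\dots,n-i-1\}$, for $i = n-2$ the permutation is literally $213\cdots n$, and $\boldsymbol{\pi}=s^0(\boldsymbol\pi)$ itself ends in $n1$ — whereas $s^{n-1}(\boldsymbol{\pi}) = s(213\cdots n) = 12\cdots n$. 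Hence $\boldsymbol{\pi}$ is exactly $(n-1)$-stack-sortable. (The very small cases $n=2,3$, where $[n-2]$ is essentially empty, are checked directly.)

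For the reverse direction I would induct on $n$, with base case $n = 2$: the only exactly $1$-stack-sortable permutation of $\mathfrak{S}_2$ is $21$, which lies in $\mathcal{L}^2$. For the inductive step, let $\boldsymbol{\pi} \in \mathfrak{S}_n$ be exactly $(n-1)$-stack-sortable and write $\boldsymbol{\pi} = LnR$ with $n = \max$. Lemma~\ref{lem:bonasurvey1.2} gives $s(\boldsymbol{\pi}) = s(L)s(R)n$; set $w := s(L)s(R) \in \mathfrak{S}_{n-1}$, so $s(\boldsymbol{\pi}) = wn$. Since $n$ now sits at the end and is the global maximum, a second application of Lemma~\ref{lem:bonasurvey1.2} (with empty right part) yields $s(wn) = s(w)n$, and iterating, $s^k(wn) = s^k(w)n$ for all $k \ge 0$. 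Hence $wn$ reaches the identity of $\mathfrak{S}_n$ at step $k$ if and only if $w$ reaches the identity of $\mathfrak{S}_{n-1}$ at step $k$, so $wn$ is exactly $(n-2)$-stack-sortable as an element of $\mathfrak{S}_n$ exactly when $w$ is exactly $(n-2)$-stack-sortable as an element of $\mathfrak{S}_{n-1}$. Since $\boldsymbol{\pi}$ is exactly $(n-1)$-stack-sortable, $s(\boldsymbol{\pi}) = wn$ is exactly $(n-2)$-stack-sortable, whence $w$ is exactly $\bigl((n-1)-1\bigr)$-stack-sortable, and the inductive hypothesis gives $w \in \mathcal{L}^{n-1}$; that is, $w$ ends with $(n-1)$ followed by $1$.

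To finish, I would case on whether $R$ is empty, using the observation (again from Lemma~\ref{lem:bonasurvey1.2}) that the last entry of $s$ applied to any word is the maximum of that word. If $R = \varnothing$, then $w = s(L)$ with $L \in \mathfrak{S}_{n-1}$, so the last entry of $w$ is $\max(L) = n-1 \ne 1$ (as $n \ge 3$), contradicting that $w$ ends in $1$; thus $R$ is nonempty. Then the last entry of $w$ equals the last entry of $s(R)$, which is $\max(R)$, so $\max(R) = 1$ and $R$ is the single-entry word $1$. Consequently $L$ is a permutation of $[n-1]\setminus\{1\} = \{2,\dots,n-1\}$ and $\boldsymbol{\pi} = Ln1$, i.e.\ $\boldsymbol{\pi} \in \mathcal{L}^n$, completing the induction. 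The main obstacle is this reverse direction, and within it the delicate point is the reduction from $\boldsymbol{\pi}$ to $w$: one must check that stripping the trailing $n$ is compatible with being \emph{exactly} $t$-stack-sortable rather than merely $t$-stack-sortable, which is exactly where the repeated use of Lemma~\ref{lem:bonasurvey1.2} to pin down the orbit of the maximal entry does the work; after that, identifying $R$ is a one-line consequence of the terminal letter of a stack-sorted word being its maximum.
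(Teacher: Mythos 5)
Your proof is correct. The forward direction is essentially the paper's: both of you read off from Theorem~\ref{thm:iterform} that $s^{n-2}(\boldsymbol{\pi})=213\cdots n$ and that no earlier iterate is the identity, so $\boldsymbol{\pi}$ is exactly $(n-1)$-stack-sortable. The reverse direction, however, is genuinely different. The paper argues the contrapositive: if $\boldsymbol{\pi}\notin\mathcal{L}^n$ it splits into two cases (the entry $1$ is not last, or the entry just before $1$ is some $k<n$) and tracks, somewhat informally, how quickly the entry $1$ migrates to the front to conclude that fewer than $n-1$ passes suffice. You instead induct on $n$: writing $\boldsymbol{\pi}=LnR$, Lemma~\ref{lem:bonasurvey1.2} gives $s(\boldsymbol{\pi})=wn$ with $w=s(L)s(R)\in\mathfrak{S}_{n-1}$, the identity $s^k(wn)=s^k(w)n$ transfers ``exactly $(n-2)$-stack-sortable'' from $wn$ to $w$, the inductive hypothesis forces $w\in\mathcal{L}^{n-1}$, and the observation that a stack-sorted word ends in its maximum pins down $R=1$. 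Your route is arguably tighter: every step is a direct application of Lemma~\ref{lem:bonasurvey1.2}, the ``exactly'' bookkeeping is handled cleanly by the shift $s^m(wn)=s^{m+1}(\boldsymbol{\pi})$, and no informal claims about how far $1$ moves per pass are needed. What it costs is that it only characterizes the maximal permutations, whereas the paper's case analysis gives a little extra quantitative feel for why non-$Ln1$ permutations sort strictly faster. One tiny redundancy on your side: in the forward direction it suffices to note $s^{n-2}(\boldsymbol{\pi})=213\cdots n\neq 12\cdots n$, since $s$ fixes the identity and hence no earlier iterate can be the identity either; checking each $i\le n-2$ separately is harmless but unnecessary.
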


\begin{proof}
If $\boldsymbol{\pi} \in \mathcal{L}^n$, the desired consequence is a result of Theorem~\ref{thm:iterform}, which dictates that $s^{n-2}(\boldsymbol{\pi})=213\cdots n$.
It follows $s^{n-1}(\boldsymbol{\pi}) = 123 \cdots n$, therefore $\boldsymbol{\pi}$ is exactly $(n-1)$-stack-sortable.
  
      If $\boldsymbol{\pi} \not\in \mathcal{L}^n$, then $1$ is not in the last place of $\boldsymbol{\pi}$ \emph{or} some $k<n$ is directly left of $1$ in $\boldsymbol{\pi}$. 
      We consider these two cases. 
      
      \begin{case}  
          If $1$ is not the last digit in $\boldsymbol{\pi}$, then $1$ will reach the first position of the output in less than $n-1$ iterations of the algorithm. 
          This is because the $1$ will always move at least one place to the left if it is not already leftmost; the top of the stack will always be greater than $1$, so we will always push it onto the stack where it will be immediately popped before the previous top element.
          The remaining $n-1$ elements will also have been sorted in less than $n-1$ iterations because the maximal possible number of iterations it takes to sort the remaining sequence of $n-1$ elements is $n-2$.
          Thus, $\boldsymbol{\pi}$ can be stack-sorted in less than $n-1$ iterations and is not exactly $(n-1)$-stack-sortable.
      \end{case}
      
      \begin{case}
          If $\boldsymbol{\pi} = Lk1$ such that $k<n$ and $L$ is a permutation of $\lbrace 1, 2, \ldots, k-1, k+1, \ldots, n \rbrace$, then $1$ will move more than one space left after the first iteration of the algorithm.
          This is because $n$ will remain on the bottom of the stack (possibly more elements) while $k$ and $1$ are both pushed and then cleared first, therefore $s(\boldsymbol{\pi})$ will contain $1$ to the left of both $k$ and $n$.
          Similar to the reasoning in the prior case, $1$ will reach the first position of the output in less than $n-2$ more iterations, while the remaining elements will also be sorted in ascending order in less than $n-2$ more iterations by maximality. 
          As before, $\boldsymbol{\pi}$ can be stack-sorted in less than $n-1$ iterations and is not exactly $(n-1)$-stack-sortable.\end{case}\end{proof}


\section{Geometry of \texorpdfstring{$\conv\left(\mathcal{S}^{\boldsymbol{\pi}}\right)$}-} \label{sec:geometry}
In this section we interpret our permutations as points in $\R^n$, which leads us to ask:
What is the geometry that arises by taking the convex hull of stack-sorting iterations for $\boldsymbol{\pi} \in \mathcal{L}^n$?

\begin{proposition}\label{theorem:tai}
For $\boldsymbol{\pi} \in \mathcal{L}^n$, the set $\mathcal{S}^{\boldsymbol{\pi}}$ is affinely independent in $\R^n$.
  \end{proposition}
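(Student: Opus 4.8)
The plan is to show that the $n-1$ non-identity iterations, together with the identity, form a set of $n$ affinely independent points in $\mathbb{R}^n$, which is equivalent to showing that the $n-1$ difference vectors obtained by subtracting the identity $\mathbf{e} = 12\cdots n$ from the other iterations are linearly independent. The main tool is the explicit description of the iterates from Section~\ref{sec:Ln1 perms}: by Theorem~\ref{thm:iterform}, for $i \in [n-2]$ the permutation $s^i(\boldsymbol{\pi})$ ends with the block $(n-i)1(n-i+1)\cdots(n-1)n$, and its initial segment is some permutation of $\{2,3,\dots,n-i\}$; also $s^{n-1}(\boldsymbol{\pi}) = \mathbf{e}$ and $s^0(\boldsymbol{\pi}) = \boldsymbol{\pi}$ itself ends in $n1$.

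First I would set up the difference vectors $\bv_i := s^i(\boldsymbol{\pi}) - \mathbf{e}$ for $i = 0,1,\dots,n-2$ and examine their last few coordinates, which by Theorem~\ref{thm:iterform} are controlled regardless of the choice of $L$. The key observation is a "staircase" structure: the position where the entry $1$ sits in $s^i(\boldsymbol{\pi})$ moves one step to the left as $i$ increases (it is in position $n-i$ for $i\ge 1$, since the tail is $(n-i)1(n-i+1)\cdots n$ occupying the last $i+1$ positions). So if I look at the coordinate of $\bv_i$ in position $n-i$ (for $i \ge 1$), that coordinate equals $1 - (n-i) \ne 0$ because $s^i(\boldsymbol{\pi})$ has a $1$ there while $\mathbf{e}$ has $n-i$; meanwhile for $j > i$ the vector $\bv_j$ has a $0$ in position $n-i$ (both $s^j$ and $\mathbf{e}$ have the entry $n-i$ there, since that position falls inside the already-sorted tail for iterate $j$). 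This gives a triangularity: ordering the vectors $\bv_{n-2}, \bv_{n-3}, \dots, \bv_1$ and reading coordinates $2, 3, \dots, n-1$, the matrix is lower- (or upper-) triangular with nonzero diagonal, so those $n-2$ vectors are linearly independent. Finally I would bring in $\bv_0 = \boldsymbol{\pi} - \mathbf{e}$: since $\boldsymbol{\pi}$ ends in $\dots n 1$ whereas all the $s^i(\boldsymbol{\pi})$ for $i \ge 1$ end in $\dots 1 n$, the last coordinate of $\bv_0$ is $1 - n \ne 0$ while the last coordinate of every $\bv_i$ with $i\ge 1$ is $n - n = 0$; hence $\bv_0$ is not in the span of the others, and all $n-1$ vectors are linearly independent. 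One caveat: one should check $s^0(\boldsymbol{\pi}) \ne s^1(\boldsymbol{\pi})$, i.e. that $\boldsymbol{\pi}$ genuinely takes $n-1$ steps, but this is exactly Theorem~\ref{thm:ssortable}, so the set really has $n$ distinct points.

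I expect the main obstacle to be bookkeeping the exact coordinate positions and being careful that the claimed triangular pattern is genuinely lower-triangular with respect to a single consistent ordering of rows and columns — in particular making sure the argument does not secretly depend on $L$. The cleanest way to finesse this is to avoid the initial segment entirely: all the linear-independence witnesses above live in coordinate positions $2$ through $n$, which are precisely the positions governed by Theorem~\ref{thm:iterform} (the block $(n-i)1(n-i+1)\cdots n$ plus the fact that position $n-i$ holds $n-i$ in every later iterate and in $\mathbf{e}$). A possible subtlety for small $n$ (say $n = 3$) is that $L$ is empty and a couple of the cases degenerate; I would dispatch those directly. With that, affine independence of $\mathcal{S}^{\boldsymbol{\pi}}$ follows, and in fact this immediately yields that $\conv(\mathcal{S}^{\boldsymbol{\pi}})$ is an $(n-1)$-simplex, which is Proposition~\ref{corr:simpl}.
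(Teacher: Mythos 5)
Your proposal is correct and follows essentially the same route as the paper: subtract $\mathbf{e}$ from each iterate, use Theorem~\ref{thm:iterform} to see that $s^i(\boldsymbol{\pi})-\mathbf{e}$ has its rightmost nonzero coordinate in position $n-i$ (with value $1-(n-i)\neq 0$, coming from the entry $1$), and conclude linear independence from the resulting triangular structure. The only cosmetic difference is that you treat $\bv_0$ separately via the last coordinate, whereas the paper folds $i=0$ into the same one-vector-at-a-time induction.
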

  
  \begin{proof}
Consider $\boldsymbol{\pi} \in \mathcal{L}^n$.
Let $\be = 123\cdots n = s^{n-1}(\boldsymbol{\pi})$. 
Subtracting $\be$ component-wise from each of the other family members and utilizing Theorem~\ref{thm:iterform}, we deduce that $s^i(\boldsymbol{\pi}) - \be$ must end with $1(1-n+i)0\ldots 000$, where the $1$ is in the $(n-i-1)^{\text{th}}$ position. 
  
Note that 
\[s^{n-2}(\boldsymbol{\pi}) - \be = 1(-1)00\cdots 00 \text{ and } s^{n-3}(\boldsymbol{\pi}) - \be = 11(-2)0\cdots 00\] by Remark~\ref{rem:iterform}. 
As vectors in $\R^n$, these two are linearly independent.
  
Assume $S = \lbrace s^{n-2}(\boldsymbol{\pi}) - \be, s^{n-3}(\boldsymbol{\pi}) - \be, \ldots, s^{n-k+1}(\boldsymbol{\pi}) - \be \rbrace$ is a linearly independent set in $\R^n$ and consider $S \cup \lbrace s^{n-k}(\boldsymbol{\pi}) - \be \rbrace$. 
We know that $s^{n-k}(\boldsymbol{\pi}) - \be$ is the only vector in this set with a nonzero number in the $k^{\text{th}}$ coordinate.
Therefore, no linear combination of the vectors in $S$ yields $s^{n-k}(\boldsymbol{\pi}) - \be$ and $S \cup \lbrace s^{n-k}(\boldsymbol{\pi}) - \be \rbrace$ is linearly independent.
  
We can conclude $\mathcal{S}^{\boldsymbol{\pi}}$  is affinely independent in $\R^n$.
\end{proof}

As a corollary we obtain the following proposition. 
  
\begin{proposition}\label{corr:simpl}
 For $\boldsymbol{\pi} \in \mathcal{L}^n$, the convex hull of $\mathcal{S}^{\boldsymbol{\pi}}$  forms an $(n-1)$-simplex in $\R^n$. 
\end{proposition}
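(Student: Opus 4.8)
The plan is to derive Proposition~\ref{corr:simpl} almost immediately from Proposition~\ref{theorem:tai}, using only the basic dictionary between affine independence and simplices recalled in Section~\ref{subsec:geom}. First I would recall that $\mathcal{S}^{\boldsymbol{\pi}}$ consists of the iterates $s^0(\boldsymbol{\pi}), s^1(\boldsymbol{\pi}), \ldots, s^{n-1}(\boldsymbol{\pi})$, and that by Theorem~\ref{thm:ssortable} a permutation $\boldsymbol{\pi}\in\mathcal{L}^n$ is exactly $(n-1)$-stack-sortable; hence these $n$ iterates are pairwise distinct and $|\mathcal{S}^{\boldsymbol{\pi}}| = n$. (Distinctness also follows directly from Theorem~\ref{thm:iterform}, since $s^i(\boldsymbol{\pi})$ has a uniquely determined tail for each $i\in[n-2]$, and $s^{n-1}(\boldsymbol{\pi})=\be$ is the identity.)

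Next I would invoke Proposition~\ref{theorem:tai}, which says $\mathcal{S}^{\boldsymbol{\pi}}$ is affinely independent in $\R^n$. Combining this with $|\mathcal{S}^{\boldsymbol{\pi}}| = n$, we have $n$ affinely independent points in $\R^n$. By the characterization in Section~\ref{subsec:geom}, the convex hull of $k+1$ affinely independent points is a $k$-simplex of dimension $k$; taking $k = n-1$, we conclude that $\conv(\mathcal{S}^{\boldsymbol{\pi}})$ is an $(n-1)$-simplex in $\R^n$. Equivalently, $\aff(\mathcal{S}^{\boldsymbol{\pi}})$ is $(n-1)$-dimensional, so $\dim\conv(\mathcal{S}^{\boldsymbol{\pi}}) = n-1$, and a full-dimensional polytope in its affine hull with exactly $\dim + 1$ vertices is by definition a simplex.

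There is essentially no obstacle here: the real content was already carried out in Proposition~\ref{theorem:tai} (the inductive linear-independence argument on the vectors $s^i(\boldsymbol{\pi}) - \be$), and the only thing left to check is the cardinality count $|\mathcal{S}^{\boldsymbol{\pi}}| = n$, which I would mention explicitly so that the reader sees why we get an $(n-1)$-simplex rather than something of smaller dimension. If I wanted to be slightly more careful, I would note that the vectors exhibited in the proof of Proposition~\ref{theorem:tai}, namely $s^i(\boldsymbol{\pi}) - \be$ for $i = 0, 1, \ldots, n-2$, are exactly $n-1$ in number and linearly independent, which is the precise statement that the $n$ points $s^0(\boldsymbol{\pi}), \ldots, s^{n-1}(\boldsymbol{\pi})$ (with $s^{n-1}(\boldsymbol{\pi}) = \be$ playing the role of the basepoint) are affinely independent; hence their convex hull is an $(n-1)$-simplex.
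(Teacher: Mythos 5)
Your proposal is correct and matches the paper's approach: the paper presents this proposition as an immediate corollary of Proposition~\ref{theorem:tai}, exactly as you do. Your extra remark that $|\mathcal{S}^{\boldsymbol{\pi}}| = n$ (via Theorem~\ref{thm:ssortable} or Theorem~\ref{thm:iterform}) is a worthwhile detail the paper leaves implicit, but it does not change the argument.
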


When considering $\mathcal{S}^{\boldsymbol{\tau}_n}$, for $\boldsymbol{\tau}_n:=23\cdots n1$, we will denote its convex hull as $\triangle_n$ and refer to it as the \defterm{$\mathcal{S}^{\boldsymbol{\tau}}$ simplex}.
Notice that the simplex formed is one dimension less than the dimension of its ambient space. 
This is not a trivial detail, it will necessitate some caution as we proceed to develop the Ehrhart theory of $\triangle_n$, especially when comparing it to the unit cube and lecture-hall simplex which are both full-dimensional.

\begin{theorem} \label{thm:hollow}
For $\boldsymbol{\tau}_n:=23\cdots n1$, the simplex $\triangle_n:= \conv(\mathcal{S}^{\boldsymbol{\tau}})$ is hollow. 
Specifically, all non-vertex integer points of $\triangle_n$ lie on the facet formed from the convex hull of $\mathcal{S}^{\boldsymbol{\tau}_n} \setminus \{\mathbf{e}\}$. 
\end{theorem}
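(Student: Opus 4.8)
The plan is to first pin down the vertices of $\triangle_n$ explicitly using Corollary~\ref{corr:iterform}: the vertices are $\boldsymbol{\tau}_n = s^0(\boldsymbol{\tau}_n), s^1(\boldsymbol{\tau}_n), \ldots, s^{n-1}(\boldsymbol{\tau}_n) = \mathbf{e}$, where $s^i(\boldsymbol{\tau}_n) = 23\cdots(n-i)\,1\,(n-i+1)\cdots n$. Every point of $\triangle_n$ is a convex combination $\sum_{i=0}^{n-1}\lambda_i\, s^i(\boldsymbol{\tau}_n)$ with $\lambda_i \geq 0$ and $\sum \lambda_i = 1$; since the $s^i(\boldsymbol{\tau}_n)$ are affinely independent (Proposition~\ref{theorem:tai}), these barycentric coordinates are unique. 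The facet opposite the vertex $\mathbf{e}$ is exactly $\{\lambda_{n-1} = 0\}$, i.e. the convex hull of $\mathcal{S}^{\boldsymbol{\tau}_n}\setminus\{\mathbf{e}\}$. So the theorem reduces to the following claim: if $x = \sum_{i=0}^{n-1}\lambda_i\, s^i(\boldsymbol{\tau}_n) \in \Z^n$ is not a vertex, then $\lambda_{n-1} = 0$.

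The key computation is to read off the coordinates of a general point $x$ in terms of the $\lambda_i$. Looking at where the entry $1$ sits in $s^i(\boldsymbol{\tau}_n)$ (position $n-i$, for $i \in \{0,\ldots,n-1\}$), and using that all other entries in $s^i(\boldsymbol{\tau}_n)$ are the ``almost-sorted'' sequence $2,3,\ldots$ with a shift, I would compute each coordinate $x_j = \sum_i \lambda_i\, (s^i(\boldsymbol{\tau}_n))_j$ as an explicit affine-linear function of $\lambda_0,\ldots,\lambda_{n-1}$. Concretely, I expect $x_1 = \sum_i \lambda_i (s^i)_1$ where $(s^i)_1 = 2$ for $i \le n-2$ and $(s^{n-1})_1 = 1$, giving $x_1 = 2 - \lambda_{n-1}$; similarly $x_2 = 3 - \lambda_{n-1} - \lambda_{n-2}$ or something of this shape, and more generally the last coordinate $x_n = \sum_i \lambda_i (s^i)_n$, where $(s^i)_n = n$ for $i \le n-2$ and $(s^{n-1})_n = n$ as well — wait, I need to be careful, $(s^i)_n = n$ for all $i$ since the entry $1$ never reaches position $n$; rather the telling coordinate is $x_1$, which equals $2$ on all vertices except $\mathbf{e}$ where it is $1$, so $x_1 = 2-\lambda_{n-1}$. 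Then integrality of $x_1$ forces $\lambda_{n-1} \in \{0,1\}$ (since $\lambda_{n-1}\in[0,1]$), and $\lambda_{n-1}=1$ means $x = \mathbf{e}$, a vertex. Hence any non-vertex lattice point has $\lambda_{n-1}=0$, proving it lies on the stated facet; since $\triangle_n$ is $(n-1)$-dimensional and this is a proper face, the simplex is hollow.

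I would structure the write-up as: (1) recall the vertex list and barycentric coordinates; (2) isolate the coordinate function $x_1 = 2 - \lambda_{n-1}$ (and double-check that the penultimate entry of every $s^i(\boldsymbol{\tau}_n)$ with $i<n-1$ is $n$, i.e. the $\tau$-structure, so that this one coordinate cleanly separates $\mathbf{e}$ from the rest — actually the cleanest separator is the \emph{first} coordinate, which is $2$ for every $s^i(\boldsymbol{\tau}_n)$ with $i \le n-2$ and $1$ for $\mathbf{e}$); (3) conclude $\lambda_{n-1} = 2 - x_1 \in \Z \cap [0,1] = \{0,1\}$; (4) if $\lambda_{n-1}=1$ then $x=\mathbf{e}$; otherwise $\lambda_{n-1}=0$ and $x \in \conv(\mathcal{S}^{\boldsymbol{\tau}_n}\setminus\{\mathbf{e}\})$, a facet. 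Finally note $\triangle_n$ has exactly $n$ vertices plus whatever lies on this one facet, so $\triangle_n^\circ \cap \Z^n = \emptyset$.

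The main obstacle is purely bookkeeping: confirming that the first coordinate of $s^i(\boldsymbol{\tau}_n)$ really is $2$ for all $i \in \{0,1,\ldots,n-2\}$ and $1$ only for $i = n-1$. By Corollary~\ref{corr:iterform}, $s^i(\boldsymbol{\tau}_n) = 23\cdots(n-i)\,1\,(n-i+1)\cdots n$; for $i \le n-2$ the prefix $23\cdots(n-i)$ is nonempty and starts with $2$, while for $i = n-1$ it degenerates to the empty prefix so the permutation starts with $1$. That settles it, and no genuine obstruction remains — the slickness is in noticing that a single coordinate already does all the work, so there is no need to solve the full affine system. One should also note the $n=3$ edge case separately if the indexing of Corollary~\ref{corr:iterform} requires $n \ge 4$, but the statement of that corollary is for all $n$ with $\boldsymbol{\tau}_n \in \mathcal{L}^n$, so $n \ge 3$ is fine.
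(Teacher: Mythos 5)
Your argument is correct and is essentially identical to the paper's proof: both isolate the first coordinate, observe it equals $2$ on every vertex except $\mathbf{e}$ (where it is $1$), deduce that the barycentric coefficient of $\mathbf{e}$ is $2 - x_1 \in \{0,1\}$ by integrality, and conclude the point is either the vertex $\mathbf{e}$ or lies on the facet opposite it. No changes needed.
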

  
\begin{proof}
Note that all elements of $\mathcal{F}_{L'n1}$, treated as points in $\R^n$, have $2$ as their fist coordinate entry except for $\mathbf{e} = (1,2, \ldots, n)$. 
This implies that if we take a convex combination of elements of $\mathcal{S}^{\boldsymbol{\tau}}$, the first coordinate entry is of the form 
\[\lambda_1 + 2(\lambda_2 + \dots + \lambda_n) = k,\] where $k$ is a real number and $0< k \leq 2 $ because of the constraints on $\lambda$ from Equation (\ref{eqn: convex hull}).
Further note that $\lambda_2 + \dots + \lambda_n = 1 - \lambda_1$.
Hence, $2 - \lambda_1 = k$, which we can rewrite as $\lambda_1 = 2-k$. 
For our first-coordinate entry to be an integer, we can only consider $k=1$ and $2$.
If $k=1$, then $\lambda_1 = 1$ and we obtain the vertex point $\mathbf{e}$. 
If $k=2$, then $\lambda_1 = 0$ and the integer point exists on the facet containing all vertices except $\mathbf{e}$.
\end{proof}

Hollowness is not a property special to $\triangle_n$. 
The unit cube and lecture-hall simplex also satisfy this property.
Because we know their Ehrhart polynomials, we can argue this much more efficiently using reciprocity:
\[(-1)^d L_\Z(P_n^{\circ}; 1) = L_\Z(P_n;-1)=(-1 + 1)^n = 0. \]
\begin{remark}
The lecture-hall simplex is even more so like $\triangle_n$ because it is hollow in the exact same way.
All non-vertex points are on the facet formed from the convex hull of its vertices excluding $(1,2,\ldots,n)$.
An almost identical argument to Theorem~\ref{thm:hollow} can show this to be the case.
\end{remark}

\begin{remark}
    A lattice polytope $P$ is hollow if and only if $-1$ is a root of its Ehrhart polynomial. 
    More generally, the dilate $tP$ is hollow if and only if $-t$ is a root of the Ehrhart polynomial of $P$.
\end{remark}

To conclude this section, we leave the reader with some directions for further investigation of these subpolytopes of the permutahedron arising from this set-up.

\begin{question}\label{question}
For $\boldsymbol{\pi}\in \mathfrak{S}_n$, $\conv(\mathcal{S}^{\boldsymbol{\pi}})$ forms 
a subpolytope of the permutahedron $\Pi_n$.
\begin{enumerate}
    \item Which choice of $\boldsymbol{\pi}\in \mathfrak{S}_n$ maximizes the volume of $\conv(\mathcal{S}^{\boldsymbol{\pi}})$?

    \item  What is the maximum volume of such a polytope $\conv(\mathcal{S}^{\boldsymbol{\pi}})$? 

    \item Which choice of $\boldsymbol{\pi}\in \mathfrak{S}_n$ guarantees that $\conv(\mathcal{S}^{\boldsymbol{\pi}})$ forms simplex? 

    \item What is the maximum volume of a stack-sorting simplex?

    \item What can be said about triangulations of $\Pi_n$ containing a stack-sorting simplex? 
    
    \item Can we use a triangulation of $\Pi_n$ containing a stack-sorting simplex to effectively determine the lattice-point enumerator of $\Pi_n$?
\end{enumerate}
\end{question}

\begin{figure}[h]
        \centering
\begin{tikzpicture}%
	[scale=.7,
	edge/.style={color=blue},
	facet/.style={fill=cyan,fill opacity=0.300000},
	baseline=(222).base]
%
%
\coordinate (321) at (0.86603, -0.50000);
\node[anchor=west] at (321) {\color{black}\footnotesize$321$};

\coordinate (231) at (0.86603, 0.50000);
\node[anchor=west] at (231) {\color{black}\footnotesize$231$};

\coordinate (312) at (0.00000, -1.00000);
\node[anchor=north] at (312) {\color{black}\footnotesize$312$};

\coordinate (132) at (0.00000, 1.00000);
\node[anchor=south] at (132) {\color{black}\footnotesize$132$};

\coordinate (213) at (-0.86603, -0.50000);
\node[anchor=east] at (213) {\color{black}\footnotesize$213$};

\coordinate (123) at (-0.86603, 0.50000);
\node[anchor=east] at (123) {\color{black}\footnotesize$123$};

\coordinate (222) at (0,0);
\node[anchor=north] at (222) {\color{black}\scriptsize};
\fill[facet] (123) -- (132) -- (231) -- (321) -- (312) -- (213) -- cycle {};
\draw[edge] (321) -- (231);
\draw[edge] (321) -- (312);
\draw[edge] (231) -- (132);
\draw[edge] (312) -- (213);
\draw[edge] (132) -- (123);
\draw[edge] (213) -- (123);

\draw[pink, fill=pink] (123) -- (231) -- (213) --cycle {};

\draw[magenta,fill=magenta] (321) circle (2pt);
\draw[magenta,fill=magenta] (231) circle (2pt);
\draw[magenta,fill=magenta] (312) circle (2pt);
\draw[magenta,fill=magenta] (213) circle (2pt);
\draw[magenta,fill=magenta] (123) circle (2pt);
\draw[magenta,fill=magenta] (222) circle (2pt);
\draw[magenta,fill=magenta] (132) circle (2pt);

\end{tikzpicture}
\caption{The simplex $\conv(\mathcal{S}^{321})$ in the $3$-permutahedron $\Pi_3$.}
\end{figure}  

\begin{problem}\label{problem} 
For $\boldsymbol{\pi}\in \mathfrak{S}_n$ with a particular structure or (avoiding) pattern, establish the following for the polytope $\conv(\mathcal{S}^{\boldsymbol{\pi}})$:
\begin{enumerate}
    \item its face structure/combinatorial type,
    \item a recursive, closed, and/or combinatorial formula for its normalized volume, and 
    \item its Ehrhart theory.
\end{enumerate}    
\end{problem}


\section{Ehrhart theory of \texorpdfstring{$\mathcal{S}^{\boldsymbol{\tau}_n}$}- simplices}\label{sec:ehrhart}
We now develop the Ehrhart theory of the $\mathcal{S}^{\boldsymbol{\tau}_n}$ simplices. 
There is a slight caveat that prevents us from constructing unimodular transformations directly between $\triangle_n$ and $\square^{n-1}$ or $P_{n-1}$ because $\triangle_n$, unlike the other objects being full-dimensional in $\R^{n-1}$, is $(n-1)$-dimensional in $\R^n$. 
This means a matrix representation of $\triangle_n$ will be $n$ by $n$, while that of say $P_{n-1}$ will be $n-1$ by $n$ or $\square^{n-1}$ will be $n-1$ by $2^{n-1}$. 
Any transformation between them is a non-square matrix and thus cannot be unimodular. 

We transform from $\triangle_n$ to $P_{n-1}$ instead of $\square^{n-1}$ because the former is also a simplex of the same dimension, which implies they both have $n$ vertices. 
We then need to make a correction for the difference in their ambient spaces.
One option, the one we will proceed with, is to lift $P_{n-1}$ to $\R^n$ by appending a new last coordinate that does not affect the lattice point count. 
We will establish a unimodular transformation from this new construction to $\triangle_n$.
Another option is to project $\triangle_n$ onto a hyperplane, ensuring the lattice count remains preserved, and then transform this construction in $\R^{n-1}$ to $P_{n-1}$.
Regardless, to construct a unimodular transformation we need to compare two polytopes living in the same ambient space.

Before we proceed, a quick observation about the recursive structure of the lecture-hall simplex will be insightful.
Let $\mathbf{0_n}$ be the zero vector in $\R^n$.

\begin{proposition}\label{rem:LHrec}
    Let $V_n$ denote the vertex set of $P_n$ and $Q_n := \conv(V_n \setminus \{\mathbf{0_n}\})$.
   Then
    \[ L_\Z(P_n; t) = L_\Z(Q_{n+1}; t).\]
In other terms, the polytopes $\conv(V_n)$ and $\conv(V_{n+1} \setminus \{\mathbf{0_{n+1}}\})$ are integrally equivalent.
\end{proposition}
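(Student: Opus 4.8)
The plan is to exhibit an explicit integral equivalence between $Q_{n+1} = \conv(V_{n+1}\setminus\{\mathbf{0_{n+1}}\})$ and $P_n$, from which the equality of Ehrhart functions follows immediately by Definition~\ref{def:integralequiv}. Recall from the description just before Proposition~\ref{prop:LHS} that the vertices of $P_{n+1}$ are the $n+2$ points
\[
\mathbf{0_{n+1}},\ (0,\ldots,0,n+1),\ (0,\ldots,0,n,n+1),\ \ldots,\ (0,2,3,\ldots,n,n+1),\ (1,2,3,\ldots,n,n+1),
\]
so $V_{n+1}\setminus\{\mathbf{0_{n+1}}\}$ consists of exactly $n+1$ points, each of which has last coordinate equal to $n+1$ and, moreover, whose $k$-th coordinate (for $1\le k\le n$) is either $0$ or $k$. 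The key structural observation is that these $n+1$ points, after deleting their last coordinate (which is constant), are precisely the $n+1$ vertices of $P_n$. So the natural candidate map is $\varphi:\R^{n+1}\to\R^n$ given by $\varphi(x_1,\ldots,x_n,x_{n+1}) = (x_1,\ldots,x_n)$, i.e.\ projection forgetting the last coordinate.

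First I would verify that $\varphi$ restricts to a bijection on the vertex sets: as noted, $\varphi$ sends $V_{n+1}\setminus\{\mathbf{0_{n+1}}\}$ bijectively onto $V_n$, since deleting the last coordinate from $(0,\ldots,0,k,k+1,\ldots,n,n+1)$ yields $(0,\ldots,0,k,k+1,\ldots,n)$, which runs through all vertices of $P_n$ as $k$ ranges appropriately, plus the vertex $(1,2,\ldots,n)$. Consequently $\varphi$ maps $Q_{n+1}$ onto $P_n$ as a polytope (it is affine and surjective on vertices, hence on convex hulls). Next I would check that $\varphi$ is a bijection on lattice points: since every point of $Q_{n+1}$ lies on the hyperplane $x_{n+1}=n+1$ (it is a convex combination of points with that last coordinate), the last coordinate carries no information, and $\varphi$ restricted to $Q_{n+1}$ is injective; its inverse on $P_n$ is the affine lift $(y_1,\ldots,y_n)\mapsto(y_1,\ldots,y_n,n+1)$, which sends $\Z^n$ into $\Z^{n+1}$. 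Hence $\varphi$ gives a bijection $Q_{n+1}\cap\Z^{n+1}\longleftrightarrow P_n\cap\Z^n$, which is exactly the definition of integral equivalence, and so $L_\Z(Q_{n+1};t)=L_\Z(P_n;t)$ for all $t\in\N$. (One should note the dilates behave compatibly: $tQ_{n+1}$ lies on $x_{n+1}=t(n+1)$ and $\varphi(tQ_{n+1})=tP_n$, so the bijection is dilate-by-dilate.)

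The only genuinely delicate point — and the one I would spend the most care on — is the claim that $\varphi$ maps $Q_{n+1}$ \emph{onto} all of $P_n$ rather than onto a lower-dimensional shadow, equivalently that the $n+1$ projected vertices are affinely independent in $\R^n$. This is not automatic from projection alone, but here it is transparent: the projected points are exactly $V_n$, which we are told in the text are $n+1$ affinely independent points spanning the $n$-dimensional simplex $P_n$. Since an affine map sending the vertex set of one simplex bijectively onto the vertex set of another maps the first simplex onto the second, we conclude $\varphi(Q_{n+1}) = P_n$. The remaining verifications (that $Q_{n+1}$ genuinely lies in the hyperplane $x_{n+1}=n+1$, and that the lift sends integer points to integer points) are routine. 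Finally, combining with Proposition~\ref{prop:LHS} gives the explicit value $L_\Z(Q_{n+1};t) = L_\Z(P_n;t) = (t+1)^n$, which will be the form actually used later when comparing with $\triangle_{n+1}$.
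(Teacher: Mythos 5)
Your proof is correct and is essentially the paper's argument run in the opposite direction: the paper lifts $V_n$ to $\R^{n+1}$ by appending the constant last coordinate $n+1$ to obtain $V_{n+1}\setminus\{\mathbf{0_{n+1}}\}$, while you project $Q_{n+1}$ down by deleting that constant coordinate — the two maps are inverse to each other. Your version just spells out the routine verifications (bijection on vertices, affine independence, dilate-by-dilate lattice preservation) that the paper leaves implicit.
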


\begin{proof}
    Consider $V_n$, which consists of the points $\mathbf{0_n}, (0,0,0,\ldots,0,n), (0,0,0,\ldots,n-1,n), \ldots,$\\ $(0,0,3,\ldots, n-1, n), (0,2,3\ldots,n-1,n), \text{ and }(1,2,3,\ldots,n-1,n).$. 
    Append a last coordinate of $n+1$ to each vertex to obtain a polytope in $\R^{n+1}$ that is integrally equivalent to the original. Notice this new set is $V_{n+1}$ without $\mathbf{0_{n+1}}$.
\end{proof}

\begin{theorem}\label{thm:integral_equivalence}
The $\mathcal{S}^{\boldsymbol{\tau}_n}$ simplex $\triangle_{n}$ and $(n-1)$-dimensional lecture hall simplex $P_{n-1}$ are integrally equivalent. 
In particular, \[L_\Z(\triangle_{n}; t) = L_\Z(P_{n-1}; t).\]
\end{theorem}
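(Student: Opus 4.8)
The plan is to exhibit an explicit affine map $\varphi \colon \R^n \to \R^{n-1}$ that restricts to a bijection $\triangle_n \cap \Z^n \leftrightarrow P_{n-1} \cap \Z^{n-1}$. By Proposition~\ref{rem:LHrec}, it is equivalent (and more convenient, since it keeps both polytopes in the same ambient space $\R^n$ and makes them both simplices on $n$ vertices) to work with the lifted lecture-hall simplex $\widehat{P}_{n-1} = \conv(V_n \setminus \{\mathbf{0_n}\})$ living in $\R^n$, where each lecture-hall vertex gets an appended final coordinate. So the strategy is: (1) write down the $n$ vertices of $\triangle_n$ explicitly using Corollary~\ref{corr:iterform}, namely $s^i(\boldsymbol{\tau}_n) = 23\cdots(n-i)\,1\,(n-i+1)\cdots n$ for $i = 0, 1, \ldots, n-1$; (2) write down the $n$ vertices of $\widehat{P}_{n-1}$; (3) produce an affine transformation carrying one vertex set to the other, and (4) verify the linear part is unimodular, i.e. integral with determinant $\pm 1$, so that the lattice is preserved by the Proposition quoted from \cite{hk10}.

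For step (3), the natural thing is to first translate $\triangle_n$ by $-\be = -(1,2,\ldots,n)$, so that the vertex $s^{n-1}(\boldsymbol{\tau}_n) = \be$ moves to the origin and the remaining vertices become, via the computation already done in the proof of Proposition~\ref{theorem:tai}, vectors of the form $s^i(\boldsymbol{\tau}_n) - \be$ that end in $1\,(i-n+1)\,0\cdots 0$ with the leading $1$ in position $n-i-1$. Symmetrically, translate $\widehat{P}_{n-1}$ so that the image of $\mathbf{0_{n-1}}$ (lifted) goes to the origin. Then both translated polytopes are cones-over-a-simplex with one vertex at the origin, and the task reduces to finding a unimodular $U \in \GL_n(\Z)$ matching the other $n-1$ vertices in the correct order; because the difference vectors $s^i(\boldsymbol{\tau}_n) - \be$ have the triangular/staircase structure exploited in Proposition~\ref{theorem:tai} (each introduces a new nonzero coordinate), the change-of-basis matrix between the two vertex bases will be triangular up to reordering, hence its determinant is a product of $\pm 1$'s. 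One should also double-check that $\triangle_n$, though not full-dimensional, spans a lattice-saturated sublattice of $\Z^n$ (equivalently, that its last-coordinate projection $\overline{\triangle_n}$ loses no lattice points), which is what licenses comparing it with the $(n-1)$-dimensional $\widehat{P}_{n-1}$ via an honest $\GL_n(\Z)$ matrix after the lift.

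The last line $L_\Z(\triangle_n; t) = L_\Z(P_{n-1}; t)$ is then immediate from Definition~\ref{def:integralequiv}, and the displayed formula $(t+1)^{n-1}$ in the theorem statement follows by combining this with Proposition~\ref{prop:LHS}.

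I expect the main obstacle to be step (3)--(4): writing the matching linear map cleanly and confirming unimodularity. The two vertex sets are indexed differently (iteration count $i$ for $\triangle_n$ versus "how many leading entries $1,2,3,\ldots$ are present" for the lecture-hall vertices), so the bijection reverses or permutes the indices, and one must track that permutation carefully; moreover, the lecture-hall vertices $(0,\ldots,0,k,k{+}1,\ldots,n{-}1)$ are not themselves a staircase of unit steps, so showing the transition matrix is integral with determinant $\pm 1$ (rather than merely rational) requires actually exhibiting it and row-reducing, not just invoking affine independence. A secondary subtlety is bookkeeping between the three closely related objects $P_{n-1}$, $\widehat{P}_{n-1}$, and $\overline{\triangle_n}$: one must make sure the appended/removed coordinates genuinely do not change lattice-point counts, which is exactly the content of Proposition~\ref{rem:LHrec} and the remark about $P_n \subset 2P_n \subset \cdots$, so I would cite those rather than re-prove them.
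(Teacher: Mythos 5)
Your plan is the same as the paper's: translate so a vertex sits at the origin, find an explicit affine unimodular map carrying the vertices of $\triangle_n$ to $V_n\setminus\{\mathbf{0_n}\}$, and then invoke Proposition~\ref{rem:LHrec} to drop down to $P_{n-1}$. What you have left undone, however, is exactly the crux, and your heuristic for it is not sound as stated: a transition matrix being ``triangular up to reordering'' gives determinant equal to the product of its diagonal entries, not automatically $\pm 1$, and indeed the two staircase vertex matrices here have diagonal entries $-1,-2,\dots,-(n-1)$, so the naive change-of-basis matrix $DC^{-1}$ is only obviously rational. The paper sidesteps both this and your lattice-saturation worry in one stroke: it takes $L$ to be the $n\times n$ lower-triangular matrix of all $-1$'s (the negated partial-sum operator), which is integral with determinant $(-1)^n$ and is defined on all of $\R^n$, so it bijects $\Z^n$ with itself regardless of the fact that $\triangle_n$ is not full-dimensional. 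A direct computation shows $L\cdot s^{n-i}(\boldsymbol{\tau}_n)+\bv_n$ runs through $(1,2,\dots,n),(0,2,\dots,n),\dots,(0,\dots,0,n)$, i.e.\ exactly $V_n\setminus\{\mathbf{0_n}\}$, where $\bv_n=(v_2,\dots,v_{n+1})$ with $v_k=\sum_{i=2}^k i$. If you want to complete your version, you should either guess such a globally defined unimodular matrix (the partial-sum operator is the natural candidate given that consecutive vertices of $\triangle_n$ differ by a ``transfer'' between two coordinates) or carry out the extension of the $(n-1)$ difference vectors to a $\Z$-basis of $\Z^n$ that your saturation remark implicitly requires; as written, the proposal stops just short of the proof.
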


We employ the notation $S + \bv$ when appropriate to denote adding the point $\bv$ to each point in the set $S$. 
If $M$ is a matrix, $M + \bv$ adds $\bv$ to each column of $M$.

\begin{proof}
    Let $M$ be the $n$ by $n$ matrix representation of $\triangle_n$ with column $i$ as $s^{n-i}(\boldsymbol{\tau}_n)$. 
    Further, let $L$ be the $n$ by $n$ lower-triangular matrix of $-1$'s, let $v_k = \sum_{i=2}^k i$, and let $\bv_k = (v_2, v_3, \ldots, v_{k+1})$. 
    
    We have that $L\cdot M + \bv_n$ has columns $(1,2,\ldots,n), (0,2,\ldots,n), \ldots (0,0,\ldots,n)$. This is the vertex set $V_n$ of $P_n$ without $\mathbf{0_n}$. Recall from Proposition~\ref{rem:LHrec} that $Q_n = \conv(V_n \setminus \{\mathbf{0_n}\})$ yields a polytope integrally equivalent to $P_{n-1}$. We now have a unimodular transformation from $\triangle_n$ to an integrally equivalent polytope to $P_{n-1}$, from which we obtain:
    \[ L_\Z(\triangle_{n}; t) = L_\Z(Q_n-\bv;t) = L_\Z(Q_n;t) = L_\Z(P_{n-1}; t).\]
\end{proof}

\begin{example}
Observe that in Figure \ref{fig:three_polytopes}, the three polytopes, the $2341$ simplex $\triangle_4\subseteq \R^4$, the 3-unit cube $\square^3$, and the 3-dimensional lecture-hall simplex $P_{3}$, all share the same lattice-point count in their first dilate. 
Also note that $\triangle_4$ is a 3-dimensional polytope in $\R^4$, whereas the other two polytopes are full-dimensional in their ambient space. Notice the similarity between how points are distributed in $P_3$ and $\triangle_3$.
\end{example}

\begin{figure}[!ht]
    \centering
    \scalebox{1.3}{\begin{tikzpicture}[x  = {(0.9cm,-0.076cm)},
                    y  = {(-0.06cm,0.95cm)},
                    z  = {(-0.44cm,-0.29cm)},
                    scale = .7,
                    color = {lightgray}]

  \coordinate (v0_P) at (2, 3, 4);
  \coordinate (v1_P) at (2, 3, 1);
  \coordinate (v2_P) at (2, 1, 3);
  \coordinate (v3_P) at (1, 2, 3);

  \definecolor{edgecolor_P}{rgb}{ 0,0,0 }
  \tikzstyle{facestyle_P} = [fill=andresblue, fill opacity=0.3, preaction={draw=white, line cap=round, line width=1.5 pt}, draw=edgecolor_P, line width=1 pt, line cap=round, line join=round]

  \draw[facestyle_P] (v1_P) -- (v3_P) -- (v0_P) -- (v1_P) -- cycle;
  \draw[facestyle_P] (v2_P) -- (v3_P) -- (v1_P) -- (v2_P) -- cycle;
  \draw[facestyle_P] (v2_P) -- (v1_P) -- (v0_P) -- (v2_P) -- cycle;


  \draw[facestyle_P] (v0_P) -- (v3_P) -- (v2_P) -- (v0_P) -- cycle;



  \definecolor{pointcolor_LatticepointsandverticesofP_0}{rgb}{1,0,1}
  \definecolor{pointcolor_LatticepointsandverticesofP_1}{rgb}{1,0,1}
  \definecolor{pointcolor_LatticepointsandverticesofP_2}{rgb}{1,0,1}
  \definecolor{pointcolor_LatticepointsandverticesofP_3}{rgb}{1,0,1}
  \definecolor{pointcolor_LatticepointsandverticesofP_4}{rgb}{1,0,1}
  \definecolor{pointcolor_LatticepointsandverticesofP_5}{rgb}{1,0,1}
  \definecolor{pointcolor_LatticepointsandverticesofP_6}{rgb}{1,0,1}
  \definecolor{pointcolor_LatticepointsandverticesofP_7}{rgb}{1,0,1}

  \coordinate (v0_LatticepointsandverticesofP) at (1, 2, 3);
  \coordinate (v1_LatticepointsandverticesofP) at (2, 1, 3);
  \coordinate (v2_LatticepointsandverticesofP) at (2, 2, 2);
  \coordinate (v3_LatticepointsandverticesofP) at (2, 2, 3);
  \coordinate (v4_LatticepointsandverticesofP) at (2, 3, 1);
  \coordinate (v5_LatticepointsandverticesofP) at (2, 3, 2);
  \coordinate (v6_LatticepointsandverticesofP) at (2, 3, 3);
  \coordinate (v7_LatticepointsandverticesofP) at (2, 3, 4);

  \fill[pointcolor_LatticepointsandverticesofP_7] (v7_LatticepointsandverticesofP) circle (2 pt);

  \fill[pointcolor_LatticepointsandverticesofP_0] (v0_LatticepointsandverticesofP) circle (2 pt);

  \fill[pointcolor_LatticepointsandverticesofP_1] (v1_LatticepointsandverticesofP) circle (2 pt);

  \fill[pointcolor_LatticepointsandverticesofP_3] (v3_LatticepointsandverticesofP) circle (2 pt);

  \fill[pointcolor_LatticepointsandverticesofP_6] (v6_LatticepointsandverticesofP) circle (2 pt);

  \fill[pointcolor_LatticepointsandverticesofP_2] (v2_LatticepointsandverticesofP) circle (2 pt);

  \fill[pointcolor_LatticepointsandverticesofP_5] (v5_LatticepointsandverticesofP) circle (2 pt);

  \fill[pointcolor_LatticepointsandverticesofP_4] (v4_LatticepointsandverticesofP) circle (2 pt);

\end{tikzpicture}
\qquad

\begin{tikzpicture}[x  = {(0.9cm,-0.076cm)},
                    y  = {(-0.06cm,0.95cm)},
                    z  = {(-0.44cm,-0.29cm)},
                    scale = 1,
                    color = {lightgray}]

  \coordinate (v0_c) at (0, 0, 0);
  \coordinate (v1_c) at (1, 0, 0);
  \coordinate (v2_c) at (0, 1, 0);
  \coordinate (v3_c) at (1, 1, 0);
  \coordinate (v4_c) at (0, 0, 1);
  \coordinate (v5_c) at (1, 0, 1);
  \coordinate (v6_c) at (0, 1, 1);
  \coordinate (v7_c) at (1, 1, 1);

  \definecolor{edgecolor_c}{rgb}{ 0,0,0 }
  \tikzstyle{facestyle_c} = [fill=andresblue, fill opacity=0.3, preaction={draw=white, line cap=round, line width=1.5 pt}, draw=edgecolor_c, line width=1 pt, line cap=round, line join=round]

  \draw[facestyle_c] (v5_c) -- (v4_c) -- (v0_c) -- (v1_c) -- (v5_c) -- cycle;
  \draw[facestyle_c] (v0_c) -- (v4_c) -- (v6_c) -- (v2_c) -- (v0_c) -- cycle;
  \draw[facestyle_c] (v0_c) -- (v2_c) -- (v3_c) -- (v1_c) -- (v0_c) -- cycle;


  \draw[facestyle_c] (v2_c) -- (v6_c) -- (v7_c) -- (v3_c) -- (v2_c) -- cycle;


  \draw[facestyle_c] (v7_c) -- (v5_c) -- (v1_c) -- (v3_c) -- (v7_c) -- cycle;


  \draw[facestyle_c] (v6_c) -- (v4_c) -- (v5_c) -- (v7_c) -- (v6_c) -- cycle;



  \definecolor{pointcolor_Latticepointsandverticesofc_0}{rgb}{1,0,1}
  \definecolor{pointcolor_Latticepointsandverticesofc_1}{rgb}{1,0,1}
  \definecolor{pointcolor_Latticepointsandverticesofc_2}{rgb}{1,0,1}
  \definecolor{pointcolor_Latticepointsandverticesofc_3}{rgb}{1,0,1}
  \definecolor{pointcolor_Latticepointsandverticesofc_4}{rgb}{1,0,1}
  \definecolor{pointcolor_Latticepointsandverticesofc_5}{rgb}{1,0,1}
  \definecolor{pointcolor_Latticepointsandverticesofc_6}{rgb}{1,0,1}
  \definecolor{pointcolor_Latticepointsandverticesofc_7}{rgb}{1,0,1}

  \coordinate (v0_Latticepointsandverticesofc) at (0, 0, 0);
  \coordinate (v1_Latticepointsandverticesofc) at (0, 0, 1);
  \coordinate (v2_Latticepointsandverticesofc) at (0, 1, 0);
  \coordinate (v3_Latticepointsandverticesofc) at (0, 1, 1);
  \coordinate (v4_Latticepointsandverticesofc) at (1, 0, 0);
  \coordinate (v5_Latticepointsandverticesofc) at (1, 0, 1);
  \coordinate (v6_Latticepointsandverticesofc) at (1, 1, 0);
  \coordinate (v7_Latticepointsandverticesofc) at (1, 1, 1);

  \fill[pointcolor_Latticepointsandverticesofc_1] (v1_Latticepointsandverticesofc) circle (2 pt);

  \fill[pointcolor_Latticepointsandverticesofc_3] (v3_Latticepointsandverticesofc) circle (2 pt);

  \fill[pointcolor_Latticepointsandverticesofc_5] (v5_Latticepointsandverticesofc) circle (2 pt);

  \fill[pointcolor_Latticepointsandverticesofc_7] (v7_Latticepointsandverticesofc) circle (2 pt);

  \fill[pointcolor_Latticepointsandverticesofc_0] (v0_Latticepointsandverticesofc) circle (2 pt);

  \fill[pointcolor_Latticepointsandverticesofc_2] (v2_Latticepointsandverticesofc) circle (2 pt);

  \fill[pointcolor_Latticepointsandverticesofc_4] (v4_Latticepointsandverticesofc) circle (2 pt);

  \fill[pointcolor_Latticepointsandverticesofc_6] (v6_Latticepointsandverticesofc) circle (2 pt);

\end{tikzpicture}

\qquad

\begin{tikzpicture}[x  = {(0.9cm,-0.076cm)},
                    y  = {(-0.06cm,0.95cm)},
                    z  = {(-0.44cm,-0.29cm)},
                    scale = .9,
                    color = {lightgray}]

  \coordinate (v0_P) at (0, 0, 0);
  \coordinate (v1_P) at (0, 0, 3);
  \coordinate (v2_P) at (0, 2, 3);
  \coordinate (v3_P) at (1, 2, 3);

  \definecolor{edgecolor_P}{rgb}{ 0,0,0 }
  \tikzstyle{facestyle_P} = [fill=andresblue, fill opacity=0.3, preaction={draw=white, line cap=round, line width=1.5 pt}, draw=edgecolor_P, line width=1 pt, line cap=round, line join=round]

  \draw[facestyle_P] (v0_P) -- (v1_P) -- (v2_P) -- (v0_P) -- cycle;
  \draw[facestyle_P] (v0_P) -- (v2_P) -- (v3_P) -- (v0_P) -- cycle;
  \draw[facestyle_P] (v3_P) -- (v1_P) -- (v0_P) -- (v3_P) -- cycle;


  \draw[facestyle_P] (v2_P) -- (v1_P) -- (v3_P) -- (v2_P) -- cycle;



  \definecolor{pointcolor_LatticepointsandverticesofP_0}{rgb}{1,0,1}
  \definecolor{pointcolor_LatticepointsandverticesofP_1}{rgb}{1,0,1}
  \definecolor{pointcolor_LatticepointsandverticesofP_2}{rgb}{1,0,1}
  \definecolor{pointcolor_LatticepointsandverticesofP_3}{rgb}{1,0,1}
  \definecolor{pointcolor_LatticepointsandverticesofP_4}{rgb}{1,0,1}
  \definecolor{pointcolor_LatticepointsandverticesofP_5}{rgb}{1,0,1}
  \definecolor{pointcolor_LatticepointsandverticesofP_6}{rgb}{1,0,1}
  \definecolor{pointcolor_LatticepointsandverticesofP_7}{rgb}{1,0,1}

  \coordinate (v0_LatticepointsandverticesofP) at (0, 0, 0);
  \coordinate (v1_LatticepointsandverticesofP) at (0, 0, 1);
  \coordinate (v2_LatticepointsandverticesofP) at (0, 0, 2);
  \coordinate (v3_LatticepointsandverticesofP) at (0, 0, 3);
  \coordinate (v4_LatticepointsandverticesofP) at (0, 1, 2);
  \coordinate (v5_LatticepointsandverticesofP) at (0, 1, 3);
  \coordinate (v6_LatticepointsandverticesofP) at (0, 2, 3);
  \coordinate (v7_LatticepointsandverticesofP) at (1, 2, 3);

  \fill[pointcolor_LatticepointsandverticesofP_3] (v3_LatticepointsandverticesofP) circle (2 pt);

  \fill[pointcolor_LatticepointsandverticesofP_5] (v5_LatticepointsandverticesofP) circle (2 pt);

  \fill[pointcolor_LatticepointsandverticesofP_6] (v6_LatticepointsandverticesofP) circle (2 pt);

  \fill[pointcolor_LatticepointsandverticesofP_7] (v7_LatticepointsandverticesofP) circle (2 pt);

  \fill[pointcolor_LatticepointsandverticesofP_2] (v2_LatticepointsandverticesofP) circle (2 pt);

  \fill[pointcolor_LatticepointsandverticesofP_4] (v4_LatticepointsandverticesofP) circle (2 pt);

  \fill[pointcolor_LatticepointsandverticesofP_1] (v1_LatticepointsandverticesofP) circle (2 pt);

  \fill[pointcolor_LatticepointsandverticesofP_0] (v0_LatticepointsandverticesofP) circle (2 pt);

\end{tikzpicture}
}
    \caption{From Left to Right: the $\mathcal{S}^{\boldsymbol{\tau}_4}$-simplex $\triangle_4\subseteq \R^4$, the 3-unit cube $\square^3$, and the 3-dimensional lecture-hall simplex $P_{3}$.}
    \label{fig:three_polytopes}
\end{figure}

Since the Ehrhart polynomial for the unit cube and the lecture-hall simplex is known, we quickly obtain the following result as a corollary. 

\begin{corollary}
    The lattice point enumerator of the $\mathcal{S}^{\boldsymbol{\tau}_n}$ simplex is \[L_\Z(\triangle_n;t) = (t+1)^{n-1}.\]
Furthemore, the Ehrhart series of the $\mathcal{S}^{\boldsymbol{\tau}_n}$  simplex is
    \[\Ehr_\Z(\triangle_n; z) = \frac{\sum_{k=0}^{n-1} A(n,k) z^k}{(1-z)^{n-1}}\]
    where $A(n,k)$ is the number of permutations of $[n]$ with $k$ descents.
\end{corollary}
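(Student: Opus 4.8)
The plan is to obtain both assertions as immediate consequences of Theorem~\ref{thm:integral_equivalence} together with the Ehrhart data already recorded for the lecture-hall simplex in Proposition~\ref{prop:LHS} and for the unit cube in Example~\ref{ex:unitcube}; no new geometry is needed.

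First I would handle the lattice-point enumerator. Theorem~\ref{thm:integral_equivalence} supplies an affine bijection between $\triangle_n \cap \Z^n$ and $P_{n-1}\cap\Z^{n-1}$, and more generally between the lattice points of every dilate, so $L_\Z(\triangle_n;t) = L_\Z(P_{n-1};t)$ for all $t\in\N$. Feeding dimension $n-1$ (not $n$) into Proposition~\ref{prop:LHS} gives $L_\Z(P_{n-1};t) = (t+1)^{n-1}$, which is the first displayed formula; equivalently one may cite $L_\Z(\square^{n-1};t)=(t+1)^{n-1}$ from Example~\ref{ex:unitcube}. Then for the Ehrhart series, I would just unwind the definition: $\Ehr_\Z(\triangle_n;z) = 1 + \sum_{t>0} L_\Z(\triangle_n;t)\,z^t = \sum_{t\ge 0}(t+1)^{n-1}z^t$, which is precisely the generating function computed for the $(n-1)$-dimensional cube in Example~\ref{ex:unitcube}; substituting that closed form yields \[\Ehr_\Z(\triangle_n;z) = \frac{\sum_{k} A(n,k)\,z^k}{(1-z)^{n-1}},\] with the Eulerian numbers indexed as in that example. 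One could equivalently invoke the Worpitzky-type identity $\sum_{t\ge 0}(t+1)^m z^t = A_m(z)/(1-z)^{m+1}$, where $A_m(z)=\sum_k A(m,k)z^k$ is the Eulerian polynomial.

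Since the proof is essentially a concatenation of earlier results, there is no substantive obstacle. The only point that requires care is the bookkeeping of the dimension shift: $\triangle_n$ is $(n-1)$-dimensional yet embedded in $\R^n$, and it is integrally equivalent to the $(n-1)$-dimensional lecture-hall simplex $P_{n-1}$ (and has the same lattice-point counts as $\square^{n-1}$), so $n-1$ rather than $n$ must be substituted into Proposition~\ref{prop:LHS} and into the Eulerian-polynomial identity, and one must check that the resulting index ranges on $A(n,k)$ line up with the statement. Because integral equivalence preserves the lattice-point count of every dilate and the Ehrhart series is built solely from those counts, the non-full-dimensionality of $\triangle_n$ causes no difficulty here.
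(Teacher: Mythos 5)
Your proposal is correct and follows exactly the route the paper intends: the result is an immediate concatenation of Theorem~\ref{thm:integral_equivalence} with Proposition~\ref{prop:LHS} (equivalently, Example~\ref{ex:unitcube}), with the only point of care being the dimension shift $n \mapsto n-1$, which you handle.

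One caution on the second display: the Worpitzky-type identity you quote, $\sum_{t\ge 0}(t+1)^m z^t = A_m(z)/(1-z)^{m+1}$, applied with $m=n-1$ gives $\Ehr_\Z(\triangle_n;z) = \bigl(\sum_{k} A(n-1,k)z^k\bigr)/(1-z)^{n}$, not the displayed expression with $A(n,k)$ over $(1-z)^{n-1}$. The discrepancy is inherited from Example~\ref{ex:unitcube}, whose stated series for $\square^n$ carries the same indexing slip (its denominator should be $(1-z)^{n+1}$); since $\triangle_n$ is $(n-1)$-dimensional, the degree of the denominator must be $n$, consistent with the general form $h^*_\Z(P;z)/(1-z)^{\dim P+1}$. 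So your final "substitute the closed form" step, done against the correct identity you cite, does not literally reproduce the displayed formula; you should either correct the indices as above or flag the typo rather than assert the match.
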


\begin{remark}
    By Ehrhart-Macdonald reciprocity, the following holds:
    \begin{align*}
        L_\Z(\triangle_n^{\circ};t+2) &= (-1)^{n-1}L_\Z(\triangle_n;-t-2)\\
        &= (-1)^{n-1}(-t-1)^{n-1}
        = (t+1)^{n-1}
        = L_\Z(\triangle_n;t).
    \end{align*}
    Also note that $L_\Z(\triangle_n^{\circ} ; 1) = 0$ and $L_\Z(\triangle_n^\circ ; 2) = 1$.
    Therefore, $\triangle_n$ is Gorenstein of index $2$.
\end{remark}

\section{Real lattice-point enumerators for recursive structures}

Initial efforts to obtain the lattice-point enumerator for $\mathcal{S}^{\boldsymbol{\tau}_n}$ simplices resulted in differing techniques than those used in the previous section. 
While a less direct method, the approach used in the following results yielded some connections to \emph{real} lattice-point enumerators of various translations of our simplices which we present now.

Note that two integrally equivalent polytopes, as defined in Definition \ref{def:integralequiv}, may not have the same \emph{real} lattice-point count. 
For example, $\triangle_n$ and $\triangle_n - \boldsymbol{\tau}_n$ are integrally equivalent, but have different real-lattice point enumerators, as we will see.
This is in notable contrast to the fact integer translations are lattice invariant for lattice polytopes.

Taking the last coordinate in each lattice point of a polytope $P \subset \R^n$ and setting it equal to $0$ is the \emph{projection} of $P$ to the hyperplane defined by $x_n = 0$. 
We denote this operation as $\proj_{x_n=0}(P)$ and use an apostrophe $P'$ as shorthand.

\begin{lemma} \label{lem5.5}
    Let $\bp = (p_1, \ldots , p_n) \in \aff(\triangle_n)$, and $\bp' = (p_1, \ldots , p_{n-1}, 0)$. 
    Consider the $\mathcal{S}^{\boldsymbol{\tau}_n}$ simplex $\triangle_n$ and let $\triangle_n':= \proj_{(x_n = 0)}(\triangle_n)$.
    Then, for any $\lambda \in \R_{\geq 0}$:
   \[
   L_\R(\triangle_n - \mathbf{p} ; \lambda) = L_\R(\triangle_n' - \bp'; \lambda).
   \]
\end{lemma}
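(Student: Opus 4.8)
The plan is to show that the projection $\proj_{(x_n=0)}$, when restricted to the affine hull of $\triangle_n$, is a bijection onto $\aff(\triangle_n')$ that preserves the integer lattice, and then to transport this to the translated and dilated copies. First I would observe that every point of $\mathcal{S}^{\boldsymbol{\tau}_n}$ ends in either $1$ or $n$ as its last coordinate: by Corollary~\ref{corr:iterform}, $s^i(\boldsymbol{\tau}_n)$ ends with $n$ for $i \in [n-1]$, and $s^0(\boldsymbol{\tau}_n) = \boldsymbol{\tau}_n$ ends with $1$. More importantly, I would pin down $\aff(\triangle_n)$ explicitly: since the coordinates of any permutation sum to $\binom{n+1}{2}$, the whole simplex lies in the hyperplane $\sum_i x_i = \binom{n+1}{2}$, and this equation lets one solve for $x_n$ as an \emph{integer-coefficient} affine function of $x_1,\dots,x_{n-1}$, namely $x_n = \binom{n+1}{2} - x_1 - \cdots - x_{n-1}$. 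This is the crux: it means the map $(p_1,\dots,p_n) \mapsto (p_1,\dots,p_{n-1},0)$ restricted to $\aff(\triangle_n)$ is injective (the last coordinate is recoverable) and its inverse also has integer coefficients, so it carries $\aff(\triangle_n) \cap \Z^n$ bijectively onto $\aff(\triangle_n') \cap \Z^n$.

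Next I would handle the translation and dilation. Given $\bp \in \aff(\triangle_n)$, the set $\lambda(\triangle_n - \bp) = \lambda\triangle_n - \lambda\bp$ lies in the affine subspace $\lambda \cdot \aff(\triangle_n) - \lambda\bp$, which is the linear subspace $U$ parallel to $\aff(\triangle_n)$; concretely $U = \{x : \sum_i x_i = 0\}$. On $U$ the projection $x \mapsto (x_1,\dots,x_{n-1},0)$ is again a linear bijection onto $U' = \{x : x_n = 0, \text{ }\sum_{i<n} x_i \in \text{(that linear subspace)}\}$ with integer-coefficient inverse $x_n = -x_1-\cdots-x_{n-1}$, hence lattice-preserving. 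Since $\proj_{(x_n=0)}$ commutes with scaling and with translation (in the sense $\proj(\lambda\triangle_n) = \lambda\triangle_n'$ and $\proj(\bp) = \bp'$, because projection is linear), we get $\proj_{(x_n=0)}\big(\lambda(\triangle_n - \bp)\big) = \lambda(\triangle_n' - \bp')$. The lattice points on the left are in bijection (via this lattice-preserving map) with the lattice points on the right, giving $L_\R(\triangle_n - \bp; \lambda) = L_\R(\triangle_n' - \bp'; \lambda)$ for every $\lambda \in \R_{\geq 0}$.

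The main obstacle, and the step deserving the most care, is verifying that the projection genuinely preserves the lattice \emph{in both directions} — i.e., that a lattice point of $\R^{n-1}\times\{0\}$ that happens to lie in $\lambda(\triangle_n' - \bp')$ really does lift back to a lattice point of $\lambda(\triangle_n - \bp)$. This is exactly where I need the integrality of the reconstruction formula for $x_n$: the point is that $\bp \in \aff(\triangle_n)$ forces $\sum_i p_i = \binom{n+1}{2}$, so on $\lambda\triangle_n - \lambda\bp$ the last coordinate equals $-\sum_{i<n}(x_i)$ plus zero — an \emph{integer} combination of the remaining coordinates with no fractional or $\lambda$-dependent constant — and hence the lift of an integer point is automatically an integer point. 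One subtlety to flag: this argument uses only that $\bp$ lies in the affine hull of $\triangle_n$ (as the hypothesis states), not that $\bp$ is a vertex or lattice point, and it uses that $\lambda$ can be an arbitrary nonnegative real because the real dilate $\lambda\triangle_n$ still lies in the same rational affine hull. I would also remark in passing that this lemma is the technical engine that later lets us replace the awkward $(n-1)$-dimensional-in-$\R^n$ simplex $\triangle_n$ with its full-dimensional image $\triangle_n'$ in $\R^{n-1}$ when computing real Ehrhart functions.
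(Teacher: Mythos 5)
Your proposal is correct and follows essentially the same route as the paper: both arguments rest on the fact that $\triangle_n - \bp$ (and every real dilate of it) lies in the hyperplane $\sum_i x_i = 0$, so the last coordinate is recovered from the first $n-1$ by the integer-coefficient formula $x_n = -\sum_{i<n} x_i$, making the projection a lattice-preserving bijection in both directions. The paper phrases this as an explicit injectivity/surjectivity check for the map $\varphi(x_1,\dots,x_n)=(x_1,\dots,x_{n-1},0)$ (using convex combinations of vertices for surjectivity), while you package the same content as "the restricted projection has an integer affine inverse," which is a slightly cleaner way to say the identical thing.
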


We shift $\triangle_n$ by $\bp$ (and thus $\triangle_n'$ by $\bp'$) as a slight extra step in order to show the much stronger result that these two shifted polytopes are in bijection for \emph{all} real dilates $\lambda$ instead of only integer dilates. 
Our desired result for the unshifted polytopes will come as a corollary.

\begin{proof}
    Let $X:= \lambda(\triangle_n - \bp) \cap \Z^n$ and $X':= \lambda(\triangle_n' - \bp') \cap \Z^n$. 
    Define 
    \[\varphi: X \to X' \text{ such that } \varphi(x_1,\ldots,x_{n-1}, x_n) = (x_1,\ldots,x_{n-1},0).\]
    This map is well-defined because of our construction of $X'$ from $X$; it is immediate that if \\ $(x_1,\ldots,x_n)~\in~X$ then $(x_1, \ldots, x_{n-1}, 0) \in X'$ because $X'$ is the projection of each point in $X$ to the hyperplane $x_n=0$.
    
    To prove injectivity, suppose $\bx,\by \in X$ such that $\varphi(\bx) = \varphi(\by)$.
    Thus, \[(x_1, \ldots, x_{n-1}, 0) = (y_1, \ldots, y_{n-1}, 0) \text{ and } x_i = y_i \text{ for } i \in [n-1].\]
    Since the vertices of $\triangle_n$ are permutations of $[n]$, points $\mathbf{z}=(z_1,\dots, z_n)$ in $\triangle_n$ exist on the hyperplane defined by \[\sum_{k=1}^n z_k = \sum_{k=1}^n k = \frac{n(n+1)}{2}.\] 
    Therefore, any point $\mathbf{z}$ in $\triangle_n - \bp'$ lives in $\sum_{k=1}^n z_k = 0$, as does any nonnegative real dilate $\sum_{k=1}^n \lambda z_k = \lambda\sum_{k=1}^n z_k = 0$. 
    Hence, $ \sum_{k=1}^n x_k = \sum_{k=1}^n y_k$ and further $x_n = y_n$. We conclude that $\bx=\by$ and $\varphi$ is injective.

    For surjectivity, consider $\bx' = (x_1,\ldots, x_{n-1}, 0) \in X'$. We know there exists a unique convex combination of vertices of $\lambda(\triangle_n' - \bp')$ such that $\sum_{k=1}^n c_k \bv_k' = \bx'$, where each $c_i \in \R_{\geq 0}$ and $\sum_{k=1}^n c_k = 1$.
    An integral point in $\lambda(\triangle_n - \bp)$ is a convex combination of the vertices of $\lambda(\triangle_n - \bp)$ and similarly for $\lambda(\triangle'_n - \bp')$.
    Note that the corresponding vertices of both polytopes share the exact same first $n-1$ coordinates; thus, the convex combination will yield $x_1, \ldots, x_{n-1}$ as the first $n-1$ coordinates of the resulting point $\bx \in \lambda(\triangle_n - \bp)$. 
    Further, this point lies in the affine span of $\lambda(\triangle_n - \bp)$ and thus we know $\sum_{i=1}^n x_i = 0$. 
    This implies $x_n = -\sum_{i=1}^{n-1} x_i$ and therefore $x_n \in \Z$ because $x_1,\ldots,x_{n-1} \in \Z$. We conclude $\bx \in X$ and $\varphi$ is surjective.
    
    Hence, we have established a bijection between $X$ and $X'$ for all nonnegative real dilates $\lambda$ and can now deduce the polytopes $\lambda(\triangle_n - \bp)$ and $\lambda(\triangle_n' - \bp')$ will always have the same number of lattice points.
\end{proof}

\begin{lemma} \label{lem:realtranslation}
  Let $\triangle_{n}$ be the $\mathcal{S}^{\boldsymbol{\tau}_n}$ simplex and $\overline{\boldsymbol{\tau}}_n:= 23\cdots n$. 
  For all $t \in \N$,
  \[L_{\R}\left(\triangle_n-\overline{\boldsymbol{\tau}_{n+1}}; \frac{t}{n}\right) = L_{\R}\left(\triangle_n-\boldsymbol{\tau}_n; \frac{t}{n}\right).\]
  \begin{proof}
    Recall integer translations preserve lattice count and note that $\boldsymbol{\tau}_n = \overline{\boldsymbol{\tau}_{n+1}} - (0,\ldots,0,n)$. 
    Thus, translating $\triangle_n - \boldsymbol{\tau}_n$ by $(0,\ldots,0,n)$ yields $\triangle_n - \overline{\boldsymbol{\tau}_{n+1}}$. 
    We must conclude they have equivalent lattice counts for all integer dilates $t$. 
    Further, translation by $(0,\ldots,0,\lambda n)$ will remain integral after dilation for all rational dilates with form $\lambda = \frac{t}{n}$, making it lattice-preserving in these special cases as well.
  \end{proof}
\end{lemma}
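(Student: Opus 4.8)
The plan is to reduce the asserted identity to the single elementary fact that translation by an integer vector is a lattice-preserving bijection of $\Z^n$. First I would record the vector relation between the two shifts. Since $\overline{\boldsymbol{\tau}_{n+1}} = (2,3,\dots,n,n+1)$ and $\boldsymbol{\tau}_n = (2,3,\dots,n,1)$ are both points of $\R^n$, we have $\overline{\boldsymbol{\tau}_{n+1}} = \boldsymbol{\tau}_n + (0,\dots,0,n)$, and therefore
\[
\triangle_n - \overline{\boldsymbol{\tau}_{n+1}} = (\triangle_n - \boldsymbol{\tau}_n) - (0,\dots,0,n).
\]
So $\triangle_n - \overline{\boldsymbol{\tau}_{n+1}}$ is literally the translate of $\triangle_n - \boldsymbol{\tau}_n$ by the lattice vector $-(0,\dots,0,n)$.

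Next I would dilate by $\lambda = t/n$ and push the scalar through the translation:
\[
\tfrac{t}{n}\bigl(\triangle_n - \overline{\boldsymbol{\tau}_{n+1}}\bigr) = \tfrac{t}{n}\bigl(\triangle_n - \boldsymbol{\tau}_n\bigr) - \tfrac{t}{n}(0,\dots,0,n) = \tfrac{t}{n}\bigl(\triangle_n - \boldsymbol{\tau}_n\bigr) - (0,\dots,0,t).
\]
This is the whole point of the lemma: although $\lambda = t/n$ is generally not an integer, the denominator $n$ that it would introduce into the translation vector is exactly cancelled by the $n$ in $\overline{\boldsymbol{\tau}_{n+1}} - \boldsymbol{\tau}_n = (0,\dots,0,n)$, leaving the \emph{integer} vector $(0,\dots,0,t)$ because $t \in \N$. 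I would then invoke that $\bx \mapsto \bx - (0,\dots,0,t)$ is a bijection of $\Z^n$ onto itself, hence restricts to a bijection
\[
\tfrac{t}{n}\bigl(\triangle_n - \boldsymbol{\tau}_n\bigr) \cap \Z^n \;\longleftrightarrow\; \tfrac{t}{n}\bigl(\triangle_n - \overline{\boldsymbol{\tau}_{n+1}}\bigr) \cap \Z^n,
\]
and taking cardinalities yields $L_\R(\triangle_n - \overline{\boldsymbol{\tau}_{n+1}}; t/n) = L_\R(\triangle_n - \boldsymbol{\tau}_n; t/n)$.

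There is no substantive obstacle here; the one thing requiring care is \emph{not} to over-claim. The identity is genuinely special to dilation factors whose denominator divides $n$ — for arbitrary real $\lambda$ the translation vector $(0,\dots,0,\lambda n)$ need not be integral, and the count can change, consistent with the earlier remark that integrally equivalent polytopes (here $\triangle_n - \boldsymbol{\tau}_n$ and $\triangle_n - \overline{\boldsymbol{\tau}_{n+1}}$) need not share a real lattice-point enumerator. So in writing up I would state the hypothesis as $\lambda = t/n$ with $t \in \N$ and use exactly that to guarantee $(0,\dots,0,t) \in \Z^n$.
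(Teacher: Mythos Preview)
Your argument is correct and is essentially identical to the paper's: both identify $\triangle_n - \overline{\boldsymbol{\tau}_{n+1}}$ as an integer translate of $\triangle_n - \boldsymbol{\tau}_n$ by $\pm(0,\dots,0,n)$, and then observe that dilating by $\lambda=t/n$ scales this translation vector to the lattice vector $(0,\dots,0,t)$, so the lattice count is preserved. Your write-up is in fact slightly more explicit than the paper's in carrying out the dilation step and tracking signs.
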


The following result presents a recursive relationship that allows for the enumeration of lattice points of \emph{real} dilates of $\triangle_{n+1}$ from \emph{rational} dilates of $\triangle_n$.

\begin{theorem} \label{thm:sssrecurrence}
For all $\lambda \in \R_{\geq 0}$,
  \begin{align*}
      L_\R(\triangle_{n+1}- \boldsymbol{\tau}_{n+1};\lambda) &= 
  \left|\lambda((\triangle_{n+1} - \boldsymbol{\tau}_{n+1}) \cap \Z^{n+1})\right| \\ &= \sum_{k=0}^{\lfloor n\lambda\rfloor}\ \left|\frac{k}{n}(\triangle_n-\boldsymbol{\tau}_n) \cap \Z^n\right|=\sum_{k=0}^{\lfloor n\lambda\rfloor}\ L_\R \left(\triangle_n-\boldsymbol{\tau}_n;\frac{k}{n}\right).
  \end{align*}
\end{theorem}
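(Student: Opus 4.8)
The plan is to slice the dilated polytope $\lambda(\triangle_{n+1} - \boldsymbol{\tau}_{n+1})$ by hyperplanes of constant last coordinate and recognize each slice, after a translation, as a rational dilate of $\triangle_n - \boldsymbol{\tau}_n$. First I would apply Lemma~\ref{lem5.5} with $\bp = \boldsymbol{\tau}_{n+1}$ to replace $\triangle_{n+1} - \boldsymbol{\tau}_{n+1}$ by its projection $\triangle_{n+1}' - \boldsymbol{\tau}_{n+1}'$ to the hyperplane $x_{n+1} = 0$; this preserves the real lattice-point count for every dilate $\lambda$, and makes the ambient coordinate $x_{n+1}$ genuinely free to be summed over. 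The reason this is the right move is that Corollary~\ref{corr:iterform} gives the vertices of $\triangle_{n+1}$ explicitly, so I can read off the last coordinates of its vertices: the vertex $\be = 12\cdots(n+1)$ has last coordinate $n+1$, while every other vertex $s^i(\boldsymbol{\tau}_{n+1})$ for $i \in [n]$ has last coordinate $n+1$ as well — except I should double-check using Theorem~\ref{thm:iterform}, which says $s^i(\boldsymbol{\tau}_{n+1})$ ends in $(n+1-i)1(n+2-i)\cdots(n+1)$, so the last entry is always $n+1$ for $i \le n-1$, and $s^{n-1} = 2314\cdots(n+1)$, $s^n = 213\cdots(n+1)$ also end in $n+1$. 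Thus the only vertex whose last coordinate differs is $s^0(\boldsymbol{\tau}_{n+1}) = 23\cdots(n+1)1$, which ends in $1$. After subtracting $\boldsymbol{\tau}_{n+1}$, the last coordinates of the vertices become: $n+1 - 1 = n$ for the $\boldsymbol{\tau}_{n+1}$ vertex itself (wait — $\boldsymbol{\tau}_{n+1} - \boldsymbol{\tau}_{n+1} = \bo$, last coordinate $0$), and for all the other vertices the last coordinate becomes $(n+1) - 1 = n$. So $\triangle_{n+1} - \boldsymbol{\tau}_{n+1}$ has one vertex at the origin and $n$ vertices lying in the hyperplane $x_{n+1} = n$.

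Given that structure, the key geometric step is: for $\lambda \in \R_{\ge 0}$, the dilate $\lambda(\triangle_{n+1} - \boldsymbol{\tau}_{n+1})$ is the convex hull of $\bo$ and the $n$ points $\lambda \bv$ with $\bv$ ranging over the non-origin vertices, all of which sit in $x_{n+1} = \lambda n$. Hence a point of this polytope with $x_{n+1} = k$ (for integer $k$, $0 \le k \le \lfloor \lambda n\rfloor$) forms, as $k$ varies, a "cross-section" which is the image of $\frac{k}{\lambda n}\lambda = \frac{k}{n}$ times the facet $\conv(\{\lambda\bv\}) $ — more precisely, the slice at height $k$ is a translate (in the $x_{n+1}$-direction only) of $\tfrac{k}{n}$ times the convex hull of the non-origin vertices of $\triangle_{n+1} - \boldsymbol{\tau}_{n+1}$, scaled so that it lands at height $k$ rather than height $n$. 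Counting lattice points of the whole polytope then decomposes as $\sum_{k=0}^{\lfloor\lambda n\rfloor}$ (number of lattice points in the slice at height $k$). I would then identify the slice at height $k$, after translating it down to $x_{n+1}=0$ and projecting, with $\tfrac{k}{n}(\triangle_n - \boldsymbol{\tau}_n)$: the facet $\conv(\mathcal{S}^{\boldsymbol{\tau}_{n+1}} \setminus \{\be\})$ is precisely (a lifted copy of) $\triangle_n$ by the explicit vertex descriptions in Corollary~\ref{corr:iterform} — removing the last coordinate $n+1$ from each of $s^i(\boldsymbol{\tau}_{n+1})$, $i \ge 1$, recovers exactly $s^{i-1}(\boldsymbol{\tau}_n)$, so this facet projects onto $\triangle_n$. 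After subtracting $\boldsymbol{\tau}_{n+1}$ one gets $\triangle_n - \overline{\boldsymbol{\tau}_{n+1}}$ in the first $n$ coordinates (with $0$ appended), and Lemma~\ref{lem:realtranslation} converts $L_\R(\triangle_n - \overline{\boldsymbol{\tau}_{n+1}}; k/n)$ into $L_\R(\triangle_n - \boldsymbol{\tau}_n; k/n)$, which is exactly the summand we want.

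Concretely, the step order is: (1) reduce to the projection via Lemma~\ref{lem5.5}; (2) using Corollary~\ref{corr:iterform}, record that $\triangle_{n+1} - \boldsymbol{\tau}_{n+1}$ is the pyramid over a facet $F$ (lying in $x_{n+1} = n$ after translation, or in the projected picture, $F$ a copy of $\triangle_n$ translated by $\overline{\boldsymbol{\tau}_{n+1}}$) with apex $\bo$; (3) partition $\lambda(\triangle_{n+1}-\boldsymbol{\tau}_{n+1}) \cap \Z^{n+1}$ according to the value $k$ of the last coordinate, where $k$ runs over $0,1,\ldots,\lfloor \lambda n\rfloor$; (4) check that the integer-height-$k$ slice of the pyramid equals $\tfrac{k}{n}F$ translated within the height-$k$ hyperplane, so its lattice-point count is $|\tfrac{k}{n}(\triangle_n - \overline{\boldsymbol{\tau}_{n+1}}) \cap \Z^n|$; (5) apply Lemma~\ref{lem:realtranslation} to rewrite this as $L_\R(\triangle_n - \boldsymbol{\tau}_n; k/n)$; (6) sum over $k$.

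The main obstacle I anticipate is step (4): justifying that the slice at integer height $k$ of the dilated pyramid is precisely a $\tfrac{k}{n}$-dilate of the base facet — not merely contained in it. This requires the base facet to lie in a single hyperplane $x_{n+1} = n$ (true by the vertex computation above) and the apex to lie at $x_{n+1} = 0$; then a point at height $k$ is a convex combination $c_0 \bo + \sum_{i\ge 1} c_i (\lambda \bv_i)$ with $\sum c_i = 1$, $c_i \ge 0$, and $\lambda n (1 - c_0) = k$, i.e. $1 - c_0 = \tfrac{k}{\lambda n}$; rescaling the barycentric weights $c_i$ ($i \ge 1$) to sum to $1$ shows the slice is $\tfrac{k}{\lambda n} \cdot \lambda F = \tfrac{k}{n} F$, exactly as needed, and crucially the scaling factor $\tfrac{k}{n}$ is independent of $\lambda$. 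One must also be careful that the barycentric-coordinate argument correctly handles the lattice (not just real) points — but since a lattice point of the pyramid at height $k$ projects to a lattice point of the base hyperplane lattice and conversely, and the relevant lattice in the base hyperplane $x_{n+1}=0$ (after translation) is $\Z^n$, this goes through. The upper limit $\lfloor \lambda n \rfloor$ on the sum is exactly the largest integer $k$ with $\tfrac{k}{n} \le \lambda$, i.e. the largest height at which the slice is nonempty. Everything else is bookkeeping with the explicit vertex formulas.
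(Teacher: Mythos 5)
Your proposal is correct, and the underlying idea is the same as the paper's: view $\triangle_{n+1}-\boldsymbol{\tau}_{n+1}$ as a pyramid with apex $\bo$ over a lifted copy of $\triangle_n-\overline{\boldsymbol{\tau}_{n+1}}$, decompose its $\lambda$-dilate into parallel slices, identify the nonempty slices with the dilates $\tfrac{k}{n}(\triangle_n-\overline{\boldsymbol{\tau}_{n+1}})$ for $0\le k\le \lfloor n\lambda\rfloor$, and finish with Lemma~\ref{lem:realtranslation}. The execution differs in a way worth noting: the paper first flattens everything to the hyperplane $x_{n+1}=0$ via Lemma~\ref{lem5.5} and then slices by the continuous family of hyperplanes $\aff(\lambda(\hat{\triangle}_n-\boldsymbol{\tau}_{n+1}'))+k\mathbf{v}$ with $\mathbf{v}=(1,\dots,1,0)$, which forces a separate ``flossing'' argument (Part 2) translating between the $x_{n+1}=h$ slices and translates of $H^{\mathrm{vert}}$ to see that only the slices with $kn\in\Z$ carry lattice points. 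You instead slice the unprojected pyramid directly by its last coordinate, where the restriction to integer heights $k$ is immediate (a lattice point has integer last coordinate), and your barycentric computation that the height-$k$ cross-section is exactly $\tfrac{k}{n}F$, independent of $\lambda$, is the clean form of what the paper obtains via the cone $\cone_{\mathbf{0}}(\hat{\triangle}_n-\boldsymbol{\tau}_{n+1}')$. This buys a noticeably shorter proof. One small inconsistency to fix in a write-up: your step (1) invokes Lemma~\ref{lem5.5} to replace the polytope by its projection onto $x_{n+1}=0$, but your step (3) then partitions by the value of the last coordinate, which is only meaningful for the unprojected polytope; in your route Lemma~\ref{lem5.5} is not actually needed, since you project each integer-height slice to $\Z^n$ individually (a lattice bijection because the height is an integer). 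Also, the aside computing the last few iterates has an off-by-one in the exponents ($s^{n-2}(\boldsymbol{\tau}_{n+1})=2314\cdots(n+1)$, etc.), but this does not affect the argument since only the last coordinates matter there.
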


The proof of this theorem is presented in Subsection \ref{subsec:pfthm5.8}. 
An example of the theorem is given to provide some insight to the readers.

\begin{figure}[!ht]
\centering
\scalebox{0.8}{

\begin{tikzpicture}[scale=0.75]
\definecolor{polytopecolor}{rgb}{0.0, 0.72, 0.92}

\draw[ultra thin, fill=gray!30] (2,4) -- (2,-6) -- (-3,-1) -- (2,4);

\draw[ultra thin, fill=polytopecolor!40] (2,4) -- (2,0) -- (0,2) -- (2,4)  node[label={[shift={(-2.13,-1.45)}, text=blue] \Large $\triangle_3 - \boldsymbol{\tau}_3$}] {};


\draw[line width=3pt, orange] (1,3) -- (2,2) node[label={[shift={(1.5,-.2)}, text=orange] \Large ``$\frac{1}{2}\left(\triangle_2-\boldsymbol{\tau}_2\right)$"}] {};

\draw[line width=3pt, orange] (0,2) -- (2,0) node[label={[shift={(1.5,-.2)}, text=orange] \Large ``$\frac{2}{2}\left(\triangle_2-\boldsymbol{\tau}_2\right)$"}] {};

\draw[line width=3pt, orange] (-1,1) -- (2,-2) node[label={[shift={(1.5,-.2)}, text=orange] \Large ``$\frac{3}{2}\left(\triangle_2-\boldsymbol{\tau}_2\right)$"}] {};

\draw[line width=3pt, orange] (-2,0) -- (2,-4)  node[label={[shift={(1.5,-.2)}, text=orange] \Large ``$\frac{4}{2}\left(\triangle_2-\boldsymbol{\tau}_2\right)$"}] {};

\draw[line width=3pt, orange] (-3,-1) -- (2,-6) node[label={[shift={(1.5,-.2)}, text=orange] \Large ``$\frac{5}{2}\left(\triangle_2-\boldsymbol{\tau}_2\right)$"}] {};

\draw[line width=2pt, gray] (-3,-1) -- (2,4) -- (2,-6);


\draw (2,4)
node[fill, color=andrespink,circle, minimum size,label={[shift={(0.65,-.25)}]\small (0,0,0)}] {};

\draw (2,2)
node[fill, color=andrespink,circle, minimum size] {};

\draw (2,0)
node[fill, color=andrespink,circle, minimum size] {};

\draw (2,-2)
node[fill, color=andrespink,circle, minimum size] {};

\draw (2,-4)
node[fill, color=andrespink,circle, minimum size] {};

\draw (2,-6)
node[fill, color=andrespink,circle, minimum size, label={[shift={(0.65,-.75)}]\small (0,-5,5)}] {};

\draw (0,2)
node[fill, color=andrespink,circle, minimum size] {};

\draw (0,0)
node[fill, color=andrespink,circle, minimum size] {};

\draw (0,-2)
node[fill, color=andrespink,circle, minimum size, label={[shift={(-2.3,1.6)}]\Large $\frac{5}{2}\left(\triangle_3-\boldsymbol{\tau}_3 \right)$}] {};

\draw (0,-4)
node[fill, color=andrespink,circle, minimum size] {};

\draw (-2,0)
node[fill, color=andrespink,circle, minimum size] {};

\draw (-2,-2)
node[fill, color=andrespink,circle, minimum size] {};

\end{tikzpicture}

\caption{A lattice translate of the $\mathcal{S}^{\boldsymbol{\tau}_3}$ simplex, $\triangle_3-\boldsymbol{\tau}_3\subset \R^3$, is in {\color{blue}{blue}}, lattice points are in {\color{magenta}{magenta}}, $\frac{5}{2}(\triangle_3-\boldsymbol{\tau}_3)$ is in {\color{gray}{gray}}, and the (lifted) five rational dilates of $\triangle_2-\boldsymbol{\tau}_2$ are in {\color{orange}{orange}}.}
\label{fig:recurrence}
\end{figure}

\begin{example}\label{ex:recurrence}
Take $n=2$, then $\boldsymbol{\tau}_3=231$ and $\boldsymbol{\tau}_2=21$ and consider $\lambda=\frac{5}{2}$. 
Note that \[\left|\frac{5}{2}(\triangle_{3} - \boldsymbol{\tau}_3) \cap \Z^{3}\right|=
12,\] which we can directly count from the schematic in Figure \ref{fig:recurrence}. 
Observe that $\triangle_3-\boldsymbol{\tau}_3$ lives in $\R^3$, but is a 2-dimensional simplex. 
By Theorem \ref{thm:sssrecurrence}, we can alternatively count the number of lattice points of $\frac{5}{2}(\triangle_{3} - \boldsymbol{\tau}_3)$ by counting the number of lattice points in the five dilates of $\frac{k}{2}(\triangle_2-\boldsymbol{\tau}_2)$ for $k=1,2,3,4,$ and $5$.
In the schematic, consider the polytopes on their affine span and we can treat the polytopes as being projected onto the hyperplane $x_3=0$ (hence, the quotations in some of the figure labels).
Counting the number of lattice points in the five dilates of $\frac{k}{2}(\triangle_2-\boldsymbol{\tau}_2)$ for $k=1,2,3,4,$ and $5$ as shown in orange, we also obtain 12 lattice points.  
\end{example}

We continue by presenting a recursive relationship that deals with the enumeration of relative interior lattice points
of real dilates of $\triangle_{n+1}$ from \emph{rational} dilates of $\triangle_n$.

\begin{proposition} \label{prop:interiorrecurrence}
  For all $\lambda \in \R_{\geq 0}$,
  \begin{align*}
      L_\R((\triangle_{n+1} - \boldsymbol{\tau}_{n+1})^\circ ; \lambda) &= \left|(\lambda(\triangle_{n+1} - \boldsymbol{\tau}_{n+1}))^{\circ} \cap \Z^{n+1}\right| \\ &= \sum_{k=1}^{\lceil n \lambda \rceil - 1}\left|\left(\frac{k}{n}(\triangle_n-\boldsymbol{\tau}_n)\right)^{\circ} \cap \Z^n\right| =\sum_{k=1}^{\lceil n \lambda \rceil - 1} L_\R\left((\triangle_n -\boldsymbol{\tau}_n)^\circ ; \frac{k}{n} \right).
  \end{align*}
\end{proposition}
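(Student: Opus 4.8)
The plan is to adapt the horizontal-slicing argument behind Theorem~\ref{thm:sssrecurrence}, replacing the full polytope by its relative interior and shrinking the index range accordingly. The geometric input is that $\triangle_{n+1}-\boldsymbol{\tau}_{n+1}$ is a pyramid with apex at the origin: by Corollary~\ref{corr:iterform}, for $i\ge 1$ the iterate $s^i(\boldsymbol{\tau}_{n+1})$ ends in $n+1$, so $s^i(\boldsymbol{\tau}_{n+1})-\boldsymbol{\tau}_{n+1}$ has last coordinate $n$, while $s^0(\boldsymbol{\tau}_{n+1})-\boldsymbol{\tau}_{n+1}=\mathbf{0}$. Thus $\triangle_{n+1}-\boldsymbol{\tau}_{n+1}=\conv(\{\mathbf{0}\}\cup B)$, where the base $B$ is the facet opposite the apex and lies in the hyperplane $\{x_{n+1}=n\}$; a direct vertex computation (again from Corollary~\ref{corr:iterform}) shows that deleting the vanishing last coordinate of $B-(0,\dots,0,n)$ recovers $\triangle_n-\overline{\boldsymbol{\tau}_{n+1}}$. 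Hence for $\lambda\ge 0$ the dilate $\lambda(\triangle_{n+1}-\boldsymbol{\tau}_{n+1})$ is the pyramid with apex $\mathbf{0}$ over $\lambda B$, which sits at height $n\lambda$, and its cross-section by $\{x_{n+1}=k\}$ for $0\le k\le n\lambda$ is exactly $\frac{k}{n}B$.

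First I would record the interior version of this slicing. The facets of a pyramid are its base together with the pyramids over the facets of the base, so for $\lambda>0$ a point of the pyramid, written $\mu b$ with $b\in\lambda B$ and $\mu\in[0,1]$, lies in $(\lambda(\triangle_{n+1}-\boldsymbol{\tau}_{n+1}))^{\circ}$ if and only if $\mu\in(0,1)$ and $b\in(\lambda B)^{\circ}$. Consequently the slice $(\lambda(\triangle_{n+1}-\boldsymbol{\tau}_{n+1}))^{\circ}\cap\{x_{n+1}=k\}$ equals $(\frac{k}{n}B)^{\circ}$ when $0<k<n\lambda$ and is empty otherwise; in particular the apex at height $0$ and, when $n\lambda\in\Z$, the base facet $\lambda B$ at height $n\lambda$ contribute nothing to the relative interior. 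Since lattice points of a slice have integer last coordinate, the relevant values of $k$ are exactly $1\le k\le\lceil n\lambda\rceil-1$, which is the summation range in the statement.

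Next I would count lattice points slice by slice. For integer $k$ the polytope $\frac{k}{n}B$ lies in the lattice hyperplane $\{x_{n+1}=k\}$, so deleting and then restoring the last coordinate gives a bijection $(\frac{k}{n}B)^{\circ}\cap\Z^{n+1}\longleftrightarrow(\frac{k}{n}(\triangle_n-\overline{\boldsymbol{\tau}_{n+1}}))^{\circ}\cap\Z^{n}$. Moreover $\frac{k}{n}(\triangle_n-\overline{\boldsymbol{\tau}_{n+1}})$ and $\frac{k}{n}(\triangle_n-\boldsymbol{\tau}_n)$ differ by the lattice translation $(0,\dots,0,k)$, because $\boldsymbol{\tau}_n=\overline{\boldsymbol{\tau}_{n+1}}-(0,\dots,0,n)$ as observed before Lemma~\ref{lem:realtranslation}; this is the relative-interior analogue of Lemma~\ref{lem:realtranslation}, and it shows the height-$k$ slice of $(\lambda(\triangle_{n+1}-\boldsymbol{\tau}_{n+1}))^{\circ}$ contains $L_\R((\triangle_n-\boldsymbol{\tau}_n)^{\circ};\frac{k}{n})$ lattice points. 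Partitioning the lattice points of $(\lambda(\triangle_{n+1}-\boldsymbol{\tau}_{n+1}))^{\circ}$ by their last coordinate and summing over $k=1,\dots,\lceil n\lambda\rceil-1$ then yields the proposition.

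The step I expect to be the main obstacle is the interior slicing: one must verify from the facial structure of the pyramid that $(\lambda(\triangle_{n+1}-\boldsymbol{\tau}_{n+1}))^{\circ}$ meets $\{x_{n+1}=k\}$ in precisely the relative interior of $\frac{k}{n}B$ — no more, since the lateral facets must be excluded, and no less — and that the slices dropped by the bounds $1\le k\le\lceil n\lambda\rceil-1$ are exactly the apex slice and, in the integral case, the base-facet slice. The remaining ingredients — the pyramid description of $\triangle_{n+1}-\boldsymbol{\tau}_{n+1}$, the coordinate-deletion bijection, and translation invariance of lattice counts — are routine and already appear, in their non-interior form, in Lemmas~\ref{lem5.5} and~\ref{lem:realtranslation} and in the proof of Theorem~\ref{thm:sssrecurrence}.
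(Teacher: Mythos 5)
Your proof is correct and follows essentially the same route as the paper, whose proof of this proposition is the one-line instruction to modify the slicing argument of Theorem~\ref{thm:sssrecurrence}: you have carried out that modification in full, and the facial analysis of the pyramid (excluding the apex, the base facet, and the lateral facets, which is what shifts the index range to $1 \le k \le \lceil n\lambda\rceil - 1$) is precisely the detail the paper leaves implicit, correctly verified. The only presentational difference is that you slice the unprojected simplex directly by the hyperplanes $x_{n+1}=k$ via the pyramid structure over the base $B$, whereas the paper's Theorem~\ref{thm:sssrecurrence} argument first projects to $x_{n+1}=0$ and slices by translates of $\aff(\hat{\triangle}_n - \boldsymbol{\tau}_{n+1}')$; these are equivalent (and your restriction of the interior characterization to $\lambda>0$ is the range on which the identity actually holds).
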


\begin{proof}
The proof follows similar arguments to that of the proof of Theorem \ref{thm:sssrecurrence}; modifying that proof, the claim holds.
\end{proof}

We conclude this section by making use of the similarities in the recurrence relations of Theorem \ref{thm:sssrecurrence} and Proposition \ref{prop:interiorrecurrence} to prove the following result by induction.
The result shows that the real lattice-point count for the $\lambda$-th dilate of $\triangle_n - \boldsymbol{\tau}_n$ coincides with the relative interior lattice-point count for the $(\lambda + 2)$-th dilate of $\triangle_n -\boldsymbol{\tau}_n$ for every real number $\lambda$. 
Furthermore, the result shows that $\triangle_n$ is Gorenstein of index $2$. This alternative proof is included because it allows for the Gorenstein result without knowledge of the entire lattice-point enumerator. 

\begin{theorem}\label{thm:gorenstein}
  For $\lambda \in \R$,
  \[
    L_\R(\triangle_n - \boldsymbol{\tau}_n ; \lambda) = L_\R((\triangle_n - \boldsymbol{\tau}_n)^{\circ}; \lambda+2).
  \]
Hence, any integer translate of $\triangle_n - \boldsymbol{\tau}_n$, in particular $\triangle_n$, is Gorenstein of index $2$.
\end{theorem}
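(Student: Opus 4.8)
The plan is to prove the identity $L_\R(\triangle_n - \boldsymbol{\tau}_n;\lambda) = L_\R((\triangle_n - \boldsymbol{\tau}_n)^\circ;\lambda+2)$ by induction on $n$, using Theorem~\ref{thm:sssrecurrence} for the closed-polytope side and Proposition~\ref{prop:interiorrecurrence} for the interior side as the two recursive engines, and then to deduce the Gorenstein statement as a corollary. First I would nail down the base case. For $n=2$ (or whichever smallest $n$ makes $\triangle_n$ nondegenerate), $\triangle_n - \boldsymbol{\tau}_n$ is a segment, and one computes $L_\R(\cdot;\lambda)$ and $L_\R(\cdot^\circ;\lambda+2)$ directly: a lattice-translated segment of the appropriate primitive length has exactly $\lfloor \lambda \rfloor + 1$ lattice points in its $\lambda$-dilate and $\lceil (\lambda+2) \rceil - 1 = \lfloor \lambda \rfloor + 1$ in the relative interior of its $(\lambda+2)$-dilate for $\lambda \geq 0$; I would also check the small negative range $\lambda \in (-2, 0)$ where both sides should vanish, and $\lambda$ slightly above $0$.

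Next, for the inductive step, I would set $f_n(\lambda) := L_\R(\triangle_n - \boldsymbol{\tau}_n;\lambda)$ and $g_n(\lambda) := L_\R((\triangle_n-\boldsymbol{\tau}_n)^\circ;\lambda)$, so the goal is $f_n(\lambda) = g_n(\lambda+2)$. Assume $f_n = g_n(\cdot + 2)$. By Theorem~\ref{thm:sssrecurrence}, $f_{n+1}(\lambda) = \sum_{k=0}^{\lfloor n\lambda \rfloor} f_n(k/n)$, and by Proposition~\ref{prop:interiorrecurrence}, $g_{n+1}(\lambda+2) = \sum_{k=1}^{\lceil n(\lambda+2)\rceil - 1} g_n(k/n)$. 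Using the inductive hypothesis, $f_n(k/n) = g_n(k/n + 2) = g_n((k+2n)/n)$, so reindexing the first sum with $j = k + 2n$ turns it into $\sum_{j=2n}^{\lfloor n\lambda \rfloor + 2n} g_n(j/n)$. The right-hand sum ranges over $j$ from $1$ to $\lceil n\lambda \rceil + 2n - 1$. The bulk of the work is reconciling these two index ranges: one must show $g_n(j/n) = 0$ for $j$ in $\{1, \dots, 2n-1\}$ (i.e., small positive dilates of the interior are empty — this is exactly the hollowness/Gorenstein-index-$2$ fact from Theorem~\ref{thm:hollow} applied one dimension down, giving $g_n(j/n) = 0$ for $0 < j/n < 2$), and that the top endpoints agree: $\lfloor n\lambda \rfloor + 2n$ versus $\lceil n\lambda \rceil + 2n - 1$ — these differ only when $n\lambda \in \Z$, in which case the extra term in one sum is $g_n$ of a value that again must be shown to vanish or to be absorbed correctly. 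Care with floor-versus-ceiling at integer dilates is where I expect the fiddliness to concentrate.

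The main obstacle, then, is the careful case analysis at the boundaries of the two summation ranges — ensuring that after the shift $k \mapsto k + 2n$ the "missing" low-index interior terms are genuinely zero and that the discrepancy between $\lfloor n\lambda\rfloor$ and $\lceil n\lambda\rceil$ at integer points of $n\lambda$ does not introduce a spurious $\pm 1$. I would handle this by invoking the induction hypothesis in the contrapositive-ish direction too: $g_n(j/n) = f_n(j/n - 2)$, which is $0$ whenever $j/n - 2 < 0$ because a nonnegative-dilate lattice-point count of a polytope containing no lattice point at dilate near $0$ must vanish — but since $\lambda \geq 0$ is assumed for $f_n$, I instead argue directly from Theorem~\ref{thm:hollow}: the relative interior of $\triangle_n - \boldsymbol{\tau}_n$ contains no lattice point, and neither does any dilate by a factor in $(0,2)$, since the unique non-vertex lattice behavior is confined to the facet avoiding $\mathbf{e}$, translated to the origin. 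Once the identity $f_n(\lambda) = g_n(\lambda+2)$ is established for all real $\lambda \geq 0$, the Gorenstein conclusion is immediate: setting $\lambda = 0$ gives $L_\R((\triangle_n - \boldsymbol{\tau}_n)^\circ; 2) = L_\R(\triangle_n - \boldsymbol{\tau}_n; 0) = 1$; hollowness gives $L_\R((\triangle_n - \boldsymbol{\tau}_n)^\circ;1) = 0$; and restricting to integer $\lambda = t$ recovers $L_\Z(P^\circ; t) = L_\Z(P; t-2)$ for all integers $t > 2$, which, combined with integer-translation invariance of the lattice-point enumerator, shows $\triangle_n$ itself satisfies the conditions of \eqref{eq:gorenstein} with $k = 2$.
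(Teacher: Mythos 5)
Your inductive frame---feed Theorem \ref{thm:sssrecurrence} into Proposition \ref{prop:interiorrecurrence} and reconcile the two index ranges---is the same engine the paper uses, but the step that makes your ranges match is false. You assert that $L_\R\bigl((\triangle_n-\boldsymbol{\tau}_n)^\circ;j/n\bigr)=0$ for all $j\in\{1,\dots,2n-1\}$, i.e.\ that every dilate in $(0,2)$ of the relative interior is lattice-free. Hollowness (Theorem \ref{thm:hollow}) only gives this for dilation factors at most $1$, hence for $j\le n$. Already for $n=2$: $\triangle_2-\boldsymbol{\tau}_2$ is the segment from $(0,0)$ to $(-1,1)$, and the relative interior of its $\tfrac32$-dilate contains $(-1,1)$, so $L_\R\bigl((\triangle_2-\boldsymbol{\tau}_2)^\circ;\tfrac32\bigr)=1$. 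In general the first dilate of the relative interior that acquires a lattice point is $2-\tfrac1n$, not $2$; that is exactly the content of the intermediate claim the paper proves first, namely $L_\R\bigl((\triangle_n-\boldsymbol{\tau}_n)^\circ;\tfrac tn+\tfrac{2n-1}{n}\bigr)=L_\R\bigl(\triangle_n-\boldsymbol{\tau}_n;\tfrac tn\bigr)$ for $t\in\Z$, whose shift is $2-\tfrac1n$ rather than $2$; the floor/ceiling bookkeeping of Claim \ref{claim: floorfunctions} exists precisely to pass between the shifts $2-\tfrac1m$ and $2-\tfrac1{m+1}$ at consecutive levels. In your version the nonzero low-index terms with $n<j\le 2n-1$ and the extra top term at $j=\lfloor n\lambda\rfloor+2n$ do not separately vanish; they can at best cancel against each other, and nothing in your argument shows that they do.

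Relatedly, the identity cannot be established for all real $\lambda\ge0$, because it fails there, so no amount of care at the range boundaries will close your induction in that generality. Your own base-case arithmetic is the first symptom: $\lceil\lambda+2\rceil-1=\lceil\lambda\rceil+1$, which equals $\lfloor\lambda\rfloor+1$ only when $\lambda\in\Z$; concretely, $L_\R(\triangle_2-\boldsymbol{\tau}_2;\tfrac12)=1$ while $L_\R\bigl((\triangle_2-\boldsymbol{\tau}_2)^\circ;\tfrac52\bigr)=2$. For $n=3$ and $\lambda=\tfrac14$, the $\tfrac14$-dilate contains only the origin, while the relative interior of $\tfrac94(\triangle_3-\boldsymbol{\tau}_3)=\conv\{(0,0,0),(0,-\tfrac92,\tfrac92),(-\tfrac94,-\tfrac94,\tfrac92)\}$ contains both $(-1,-2,3)$ and $(-1,-3,4)$. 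The floor-versus-ceiling comparison at the top of the two sums closes exactly when $(n-1)\lambda\in\Z$, and that is the regime in which the statement is actually valid; since the integers lie in it, the Gorenstein conclusion is unharmed. To repair the argument you must run the induction on the restricted set of dilates $\tfrac tn$ with the shift $\tfrac{2n-1}{n}$ (as the paper does) and only at the end convert to the shift-by-$2$ statement for the admissible $\lambda$, rather than asserting it for all of $\R_{\ge0}$.
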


The proof of this theorem is presented in Subsection \ref{subsec:pfthm5.11}.

\begin{remark}
If we extend the previous definition and define a polytope to be \defterm{real Gorenstein of index $k$} for an integer $k$, if the relations from Equation \ref{eq:gorenstein}  hold for all real dilates $t > k$, we believe this matches the rational/real Gorenstein definitions given in \cite{BeckEliaRehberg} and we can say that $\triangle_n - \boldsymbol{\tau}_n$ is ``real Gorenstein'' of index $2$.     
\end{remark}

\subsection{Proof of Theorem \ref{thm:sssrecurrence}}\label{subsec:pfthm5.8}

Before beginning, it is necessary to outline a few more terms:

Given a collection of points $C=\lbrace\bx_1,\dots,\bx_k\rbrace$ in $\R^n$, we define a \emph{pointed cone} with apex $\bv\in \R^n$ as a set of the form \begin{equation}
    \cone_\bv({C}):=\left\lbrace \bv + \sum_{i=1}^k \lambda_i (\bx_i-\bv): \lambda_i\in \R_{\geq 0} \right\rbrace.
\end{equation}
This creates a cone that starts from the point $\bv$ and consists of the infinite region bounded by the $k$ rays formed from $\bv$ through a given point $\bx_i$.


Note that the choice of point $\bv$ will affect the information that is encoded in the cone. For a polytope $P$ that is not full-dimensional, it is a careful selection of $\bv \not\in \aff(P)$ that allows dilates of $P$ to perfectly encode themselves in `slices' of the cone over $\bv$.

\begin{definition}\label{S_halfspace}
Let $S \subseteq \R^n$ be an $(n-1)$-dimensional affine subspace and take $\bx \in \R^n - S$.
Then the \defterm{$S$-halfspace with $\bx$} denoted as $\mathcal{H}_{\bx}(S)$ is defined as the unique halfspace in $\R^n$ formed from the hyperplane $S$ that includes the point $\bx$.    
\end{definition}


For the proof of Theorem \ref{thm:sssrecurrence}, we express the real dilate of some translation of the $n$-dimensional $\mathcal{S}^{\boldsymbol{\tau}_n}$-simplex, $\lambda (\triangle_{n+1} - \boldsymbol{\tau}_{n+1})$, as a disjoint union of continuous parallel slices and ``floss out'' the slices without a lattice point. 
We can think of the proof process as taking a MRI scan of $\lambda (\triangle_{n+1} - \boldsymbol{\tau}_{n+1})$, and only observing images of the cross-sections that has a lattice point, in other words, non-empty. 
It turns out that the lattice points in each non-empty slices of $\lambda (\triangle_{n+1} - \boldsymbol{\tau}_{n+1})$ are in bijective correspondence with the lattice points in some rational dilate of $\triangle_n - \boldsymbol{\tau}_n$, giving a recurrence relation as stated in Theorem \ref{thm:sssrecurrence}.

The proof is divided into three parts. 
In the first part, we express $\lambda (\triangle_{n+1} - \boldsymbol{\tau}_{n+1})$ as a disjoint union of continuous parallel slices.
In the second part, we prove which slices are non-empty, and ``floss out'' the the slices without a lattice point.
In the last part, we prove that each non-empty slices of $\lambda (\triangle_{n+1} - \boldsymbol{\tau}_{n+1})$ are in bijective correspondence with the lattice points in some rational dilate of $\triangle_n - \boldsymbol{\tau}_n$

\subsubsection{Part $1$}

Let $\triangle_{n}' = \proj_{(x_{n} = 0)}(\triangle_{n})$, and $\boldsymbol{\tau}_n' = ({\tau}_1, \ldots, {\tau}_{n-1}, 0)$ from $\boldsymbol{\tau}_n$.
Obtain $\hat{\triangle}_n$ by lifting $\triangle_n$ to $\R^{n+1}$ by appending $0$ to the end of each lattice point of $\triangle_n$. 
Now we prove the following:

    \[
      \lambda(\triangle_{n+1}' - \boldsymbol{\tau}_{n+1}') \cap \Z^{n+1} = \bigcup_{0 \leq k \leq \lambda} \left(\lambda(\triangle_{n+1}' - \boldsymbol{\tau}_{n+1}') \cap (\aff(\lambda(\hat{\triangle}_n - \boldsymbol{\tau}_{n+1}')) + k\mathbf{v}) \cap \Z^{n+1}\right)
    \] 
    where $\mathbf{v} = (1, \ldots , 1, 0) \in \R^{n+1}$.

Let $\mathbf{0_n}$ be the origin in $\R^n$, $d$ be the distance between $\mathbf{0_{n+1}}$ and $\aff(\hat{\triangle}_n - \boldsymbol{\tau}_{n+1}')$, and $\mathbf{v}$ be any fixed vector on the plane $x_{n+1} = 0$ that is orthogonal to $\aff(\hat{\triangle}_n - \boldsymbol{\tau}_{n+1}')$ such that $\mathbf{v}\cdot \boldsymbol{\tau}_{n+1}' \geq 0$ (see Figure \ref{fig:claim}, for a concrete visual example).

\begin{figure}[!ht]
\centering
\scalebox{0.8}{\begin{tikzpicture}[scale=0.75]
 \definecolor{polytopecolor}{rgb}{0.0, 0.72, 0.92}
 \definecolor{darkgreen}{RGB}{1,50,32}	
\draw[color=lightgray,step=.5cm,
dashed, opacity=0.5] (-4,-7) grid (7,11);
\draw[->] (-4,4) -- (7.5,4)
node[below right] {};
\draw[->] (2,-7) -- (2,11.5)
node[left] {};

\draw[line width=0pt, green!10, fill=green!15] (-4,-2) -- (2,4) -- (2,-7) -- (-4,-7) -- (-4,-2);


\draw[line width=1.5pt, red, dashed, domain=-4:3] plot(\x,{-1*\x-4)}) node[right] {$\aff(\frac{5}{2}(\hat\triangle_2-\boldsymbol{\tau}_{3}'))$};

\draw[line width=1.5pt, red, dashed, domain=-4:4.5] plot(\x,{-1*\x-2)}) node[right] {};

\draw[line width=1.5pt, red, dashed, domain=-4:7] plot(\x,{-1*\x)}) node[right] { };

\draw[line width=1.5pt, red, dashed, domain=-4:7] plot(\x,{-1*\x+2)}) node[right] {$\aff(\hat\triangle_2-\boldsymbol{\tau}_{3}')$};

\draw[line width=1.5pt, red, dashed, domain=-4:7] plot(\x,{-1*\x+4)}) node[right] {};

\draw[line width=1.5pt, red, dashed, domain=-4:7] plot(\x,{-1*\x+6)}) node[right] {};


\draw[ultra thin, fill=polytopecolor!40] (4,8) -- (6,10) -- (6,6) -- (4,8) node[label={[shift={(.8,-.5)}, text=blue] \normalsize $\triangle'_3$}] {};

\draw[ultra thin, fill=gray!30] (2,4) -- (2,-6) -- (-3,-1) -- (2,4);

\draw[ultra thin, fill=polytopecolor!40] (2,4) -- (2,0) -- (0,2) -- (2,4) ;


\draw[line width=3pt, orange] (4,8) -- (6,6) node[label={[shift={(-1,0)}, text=orange] \normalsize $\hat\triangle_2$}] {};

\draw[line width=3pt, orange] (1,3) -- (2,2) {};

\draw[line width=1.5pt, green, <-, domain=-4:2] plot(\x,{\x+2)}) node[right] {};

\draw[line width=1.5pt, green, ->] (2,4) -- (2,-7.1);

\draw[line width=3pt, orange] (0,2) -- (2,0) {}; 

\draw[line width=3pt, orange] (-1,1) -- (2,-2){};

\draw[line width=3pt, orange] (-2,0) -- (2,-4){};

\draw[line width=3pt, orange] (-3,-1) -- (2,-6){};


\draw (4,8)
node[fill, color=andrespink,circle, minimum size,label={[shift={(-.65,-.5)}]\small (1,2,0)}] {};

\draw (6,10)
node[fill, color=andrespink,circle, minimum size, label={[shift={(0.65,-.5)}]\small (2,3,0)}] {};

\draw (6,8)
node[fill, color=andrespink,circle, minimum size, label={[shift={(0.65,-.5)}]\small (2,2,0)}] {};

\draw (6,6)
node[fill, color=andrespink,circle, minimum size, label={[shift={(0.65,-.5)}]\small (2,1,0)}] {};

\draw (2,4)
node[fill, color=andrespink,circle, minimum size,label={[shift={(0.65,-.25)}]\small (0,0,0)}] {};

\draw (2,2)
node[fill, color=andrespink,circle, minimum size, label={[shift={(-.5,-.8)}, text=blue] \normalsize $\triangle'_3 - \boldsymbol{\tau}_{3}$}] {};

\draw (2,0)
node[fill, color=andrespink,circle, minimum size] {};

\draw (2,-2)
node[fill, color=andrespink,circle, minimum size] {};

\draw (2,-4)
node[fill, color=andrespink,circle, minimum size] {};

\draw (2,-6)
node[fill, color=andrespink,circle, minimum size, label={[shift={(0.65,-.75)}]\small (0,-5,0)}] {};

\draw (0,2)
node[fill, color=andrespink,circle, minimum size] {};

\draw (0,0)
node[fill, color=andrespink,circle, minimum size] {};

\draw (0,-2)
node[fill, color=andrespink,circle, minimum size, label={[shift={(-.3,-1)}]\normalsize $\frac{5}{2}\left(\triangle'_3-\boldsymbol{\tau}_{3}\right)$}] {};

\draw (0,-4)
node[fill, color=andrespink,circle, minimum size] {};

\draw (-2,0)
node[fill, color=andrespink,circle, minimum size] {};

\draw (-2,-2)
node[fill, color=andrespink,circle, minimum size] {};

\draw (-2,-4)
node[color=green, label={[darkgreen, shift={(0,-1)}]\large $\cone_\mathbf{0_3}(\hat{\triangle}_2-\boldsymbol{\tau}_{3}')$}] {};

\end{tikzpicture}}
\caption{This figure corresponds to the set-up of the proof of Theorem \ref{thm:sssrecurrence}, where $\lambda=\frac{5}{2}$, $n=2$, and distance $d=\sqrt{2}$.}
\label{fig:claim}
\end{figure}

We will show that the lattice points in $\lambda$-th dilate of $\triangle_{n+1}' - \boldsymbol{\tau}_{n+1}'$ corresponds to the lattice points in the union of slices of $\lambda(\triangle_{n+1}' - \boldsymbol{\tau}_{n+1}')$ obtained by intersecting the polytope with a infinite set of hyperplanes $\aff(\lambda(\hat{\triangle}_n - \boldsymbol{\tau}_{n+1}')) + k\mathbf{v}$ for all real numbers $k \in [0, \frac{\lambda d}{|\mathbf{v}|}]$.

Take $\mathcal{H}_\mathbf{0_{n+1}}(\lambda(\hat{\triangle}_n - \boldsymbol{\tau}_{n+1}'))$ as defined using Definition \ref{S_halfspace}.
Then 
    \begin{align}
      \notag \lambda(\triangle_{n+1}' - \boldsymbol{\tau}_{n+1}') \cap \Z^{n+1} &= \lambda(\triangle_{n+1}' - \boldsymbol{\tau}_{n+1}') \cap \mathcal{H}_{\mathbf{0_{n+1}}}(\lambda(\hat{\triangle}_n - \boldsymbol{\tau}_{n+1}')) \cap \Z^{n+1}\\
      \notag &= \lambda(\triangle_{n+1}' - \boldsymbol{\tau}_{n+1}') \cap \left( \bigcup_{0 \leq k \leq \frac{\lambda d}{|\mathbf{v}|}} (\aff(\lambda(\hat{\triangle}_n - \boldsymbol{\tau}_{n+1}')) + k\mathbf{v}) \right) \cap \Z^{n+1}\\
      &= \bigcup_{0 \leq k \leq \frac{\lambda d}{|\mathbf{v}|}} \left(\lambda(\triangle_{n+1}' - \boldsymbol{\tau}_{n+1}') \cap (\aff(\lambda(\hat{\triangle}_n - \boldsymbol{\tau}_{n+1}')) + k\mathbf{v}) \cap \Z^{n+1}\right). \label{derived_equality}
    \end{align}
    Since,
    \[
      \aff(\hat{\triangle}_n - \boldsymbol{\tau}_{n+1}') = \lbrace\bx \in \R^{n+1} | x_1 + \cdots + x_n = -n, x_{n+1} = 0\rbrace,
    \]
   note that $\mathbf{v} = c(1, \ldots, 1, 0) \in \R^{n+1}$ for some $c \in \R$. 

 Without loss of generality, take $c=1$ and consider $\mathbf{v} = (1, \ldots , 1, 0) \in \R^{n+1}$. Note that $d = |\mathbf{v}|$ and thus, we can proceed with (\ref{derived_equality}) as follows:
    \[
      \lambda(\triangle_{n+1}' - \boldsymbol{\tau}_{n+1}') \cap \Z^{n+1} = \bigcup_{0 \leq k \leq \lambda} \left(\lambda(\triangle_{n+1}' - \boldsymbol{\tau}_{n+1}') \cap (\aff(\lambda(\hat{\triangle}_n - \boldsymbol{\tau}_{n+1}')) + k\mathbf{v}) \cap \Z^{n+1}\right)
    \] 
    with $\mathbf{v} = (1, \ldots , 1, 0) \in \R^{n+1}$.

\subsubsection{Part $2$}

We proceed by showing that lattice points can only exist on the slices of $\lambda(\triangle_{n+1}' - \boldsymbol{\tau}_{n+1}')$ obtained from the hyperplanes $\aff(\lambda(\hat{\triangle}_n - \boldsymbol{\tau}_{n+1}')) + k\mathbf{v}$ for $k \in [0, \lambda]$ such that $kn$ is a integer. 
Equivalently, we show that if $kn \notin [0, \lambda] \cap \Z$, then $\lambda(\triangle_{n+1}' - \boldsymbol{\tau}_{n+1}') \cap (\aff(\lambda(\hat{\triangle}_n - \boldsymbol{\tau}_{n+1}')) + k\mathbf{v}) = \emptyset$.
So, first we consider $\lambda(\triangle_{n+1} - \boldsymbol{\tau}_{n+1})$, which is $\lambda(\triangle_{n+1}' - \boldsymbol{\tau}_{n+1}')$ before the projection to $x_{n+1} = 0$.
Notice that the slice of $\lambda(\triangle_{n+1} - \boldsymbol{\tau}_{n+1})$ obtained from intersecting it with the hyperplane $H_h:=\lbrace\bx \in \R^{n+1} | x_{n+1} = h\rbrace$ only contains lattice points if $h$ is an integer. 
Thus,
    \[
      h \not\in \left(\Z \cap [-\lambda n, 0] \right) \Longrightarrow (\lambda(\triangle_{n+1} - \boldsymbol{\tau}_{n+1}) \cap \Z^{n+1}) \cap H_h = \emptyset.
    \]
    
Next, we show that the slice of $\lambda(\triangle_{n+1} - \boldsymbol{\tau}_{n+1})$ obtained from intersecting it with $H_h$ is equivalent to the slice of $\lambda(\triangle_{n+1} - \boldsymbol{\tau}_{n+1})$ obtained from intersecting it with $H^{\text{vert}} + (1 - \frac{h}{n}) \mathbf{v}$, where \[H^{\text{vert}} := \lbrace\bx \in \R^{n+1}\; |\; \sum_{i=1}^n x_i = -n\rbrace.\]
Note that \[\aff(\lambda(\triangle_{n+1} - \boldsymbol{\tau}_{n+1})) = \lbrace\bx \in \R^{n+1} \; |\; \sum_{i=1}^{n+1} x_i = 0\rbrace.\]
Now, we can derive the following: $H_h \cap \aff(\lambda(\triangle_{n+1} - \boldsymbol{\tau}_{n+1}))$ equals

    \begin{align*}
&\lbrace \bx \in \R^{n+1} \; |\; x_{n+1} = h\rbrace \cap \lbrace\bx \in \R^{n+1} \; |\; \sum_{i=1}^{n+1} x_i = 0\rbrace\\
      &= \lbrace\bx \in \R^{n+1} \; |\; \sum_{i=1}^{n} x_i = -h, x_{n+1} = h\rbrace\\
      &= \lbrace\bx \in \R^{n+1} \; |\; \sum_{i=1}^{n} x_i = -h\rbrace \cap \lbrace\bx \in \R^{n+1} \; |\; \sum_{i=1}^{n+1} x_i = 0\rbrace\\
      &= \left\lbrace\left(x_1 + \left(1-\frac{h}{n}\right), \ldots, x_n + \left(1-\frac{h}{n}\right), x_{n+1}\right) \;\Bigg|\; \sum_{i=1}^{n} x_i = -n \right\rbrace \cap \lbrace\bx \in \R^{n+1} \; |\; \sum_{i=1}^{n+1} x_i = 0\rbrace\\
      &= \left(\lbrace\bx \in \R^{n+1} \; |\; \sum_{i=1}^{n} x_i = -n\rbrace + \left(1-\frac{h}{n}\right)\mathbf{v}\right) \cap \aff(\lambda(\triangle_{n+1} - \boldsymbol{\tau}_{n+1}))\\
      &= \left(H^{\text{vert}} + \left(1-\frac{h}{n}\right)\mathbf{v}\right) \cap \aff(\lambda(\triangle_{n+1} - \boldsymbol{\tau}_{n+1})).
    \end{align*}  
    Hence,
    \[
      (\lambda(\triangle_{n+1} - \boldsymbol{\tau}_{n+1}) \cap \Z^{n+1}) \cap H_h = (\lambda(\triangle_{n+1} - \boldsymbol{\tau}_{n+1}) \cap \Z^{n+1}) \cap \left(H^{\text{vert}} + \left(1-\frac{h}{n}\right)\mathbf{v}\right).
    \]
    
Further, note that the slice of $\lambda(\triangle_{n+1} - \boldsymbol{\tau}_{n+1})$ obtained from intersecting it with $H^{\text{vert}} + (1-\frac{h}{n})\mathbf{v}$ has no lattice point on it if $h$ is not an integer, i.e.,
\[
h \not\in (\Z \cap [-\lambda n, 0]) \Longrightarrow (\lambda(\triangle_{n+1} - \boldsymbol{\tau}_{n+1}) \cap \Z^{n+1}) \cap \left(H^{\text{vert}} + \left(1-\frac{h}{n}\right)\mathbf{v}\right) = \emptyset.   
\]

Lastly, we show that the projection onto $x_{n+1}=0$ of the slice of $\lambda(\triangle_{n+1} - \boldsymbol{\tau}_{n+1})$ obtained from intersecting it with $H^{\text{vert}} + (1-\frac{h}{n})\mathbf{v}$ is equivalent to the slice of $\lambda(\hat{\triangle}_n - \boldsymbol{\tau}_{n+1}')$ obtained from intersecting it with $\aff(\lambda(\hat{\triangle}_n - \boldsymbol{\tau}_{n+1}')) + (1-\frac{h}{n})\mathbf{v}$.
So, consider
\[\proj_{x_{n+1} = 0}\left((\lambda(\triangle_{n+1} - \boldsymbol{\tau}_{n+1}) \cap \Z^{n+1}) \cap \left(H^{\text{vert}} + \left(1-\frac{h}{n}\right)\mathbf{v}\right)\right).\] 

Then
  \begin{align*}
    \proj_{x_{n+1} = 0}&\left((\lambda(\triangle_{n+1} - \boldsymbol{\tau}_{n+1}) \cap \Z^{n+1}) \cap \left(H^{\text{vert}} + \left(1-\frac{h}{n}\right)\mathbf{v}\right)\right)\\
      &= \proj_{x_{n+1} = 0}\left(\lambda(\triangle_{n+1} - \boldsymbol{\tau}_{n+1}) \cap \Z^{n+1}\right) \cap \proj_{x_{n+1} = 0}\left(H^{\text{vert}} + \left(1-\frac{h}{n}\right)\mathbf{v}\right)\\
      &= (\lambda(\triangle_{n+1}' - \boldsymbol{\tau}_{n+1}') \cap \Z^{n+1}) \cap \left(\lbrace\bx \in \R^{n+1} | x_1 + \cdots + x_n = -n, x_{n+1} = 0\rbrace + \left(1-\frac{h}{n}\right)\mathbf{v}\right)\\
      &= (\lambda(\triangle_{n+1}' - \boldsymbol{\tau}_{n+1}') \cap \Z^{n+1}) \cap \left(\aff(\lambda(\hat{\triangle}_n - \boldsymbol{\tau}_{n+1}')) + \left(1-\frac{h}{n}\right)\mathbf{v}\right).
    \end{align*}  
    Thus,
    \[
      h \not\in \Z \cap [-\lambda n, 0] \Longrightarrow (\lambda(\triangle_{n+1}' - \boldsymbol{\tau}_{n+1}') \cap \Z^{n+1}) \cap \left(\aff(\lambda(\hat{\triangle}_n - \boldsymbol{\tau}_{n+1}')) + \left(1-\frac{h}{n}\right)\mathbf{v}\right) = \emptyset.
    \]
    Therefore,
    \[
        kn \not\in \Z \cap [0, \lambda n] \Longrightarrow (\lambda(\triangle_{n+1}' - \boldsymbol{\tau}_{n+1}') \cap \Z^{n+1}) \cap \left(\aff(\lambda(\hat{\triangle}_n - \boldsymbol{\tau}_{n+1}')) + k\mathbf{v}\right) = \emptyset.
    \]

    To conclude, by \textit{``flossing''} out all the slices of $\lambda(\triangle_{n+1}' - \boldsymbol{\tau}_{n+1}')$ that do not contain lattice points, we can derive the following:
    \begin{align*}
        \bigcup_{0 \leq k \leq \lambda} &\left(\lambda(\triangle_{n+1}' - \boldsymbol{\tau}_{n+1}') \cap (\aff(\lambda(\hat{\triangle}_n - \boldsymbol{\tau}_{n+1}')) + k\mathbf{v}) \cap \Z^{n+1}\right)\\
      &= \biguplus_{\substack{0 \leq k \leq \lambda: \\ kn \in \Z}} \left(\lambda(\triangle_{n+1}' - \boldsymbol{\tau}_{n+1}') \cap (\aff(\lambda(\hat{\triangle}_n - \boldsymbol{\tau}_{n+1}')) + k\mathbf{v}) \cap \Z^{n+1}\right)\\
      &= \biguplus_{k = 0}^{\lfloor \lambda n \rfloor} \left(\lambda(\triangle_{n+1}' - \boldsymbol{\tau}_{n+1}') \cap (\aff(\lambda(\hat{\triangle}_n - \boldsymbol{\tau}_{n+1}')) + \frac{k}{n}\mathbf{v}) \cap \Z^{n+1}\right)\\
      &= \biguplus_{k = 0}^{\lfloor \lambda n \rfloor} \left(\cone_{\mathbf{0_{n+1}}}(\hat{\triangle}_n - \boldsymbol{\tau}_{n+1}') \cap (\aff(\lambda(\hat{\triangle}_n - \boldsymbol{\tau}_{n+1}')) + \frac{k}{n}\mathbf{v}) \cap \Z^{n+1}\right)\\
      &= \biguplus_{k = 0}^{\lfloor \lambda n \rfloor} \left(\frac{k}{n}(\hat{\triangle}_n - \boldsymbol{\tau}_{n+1}') \cap \Z^{n+1}\right).
    \end{align*}
    
Combining the result with what we have derived in Part $1$, we obtain the following:

For all $\lambda \in \R_{\geq 0}$,
    \begin{align*}
      \lambda(\triangle_{n+1}' - \boldsymbol{\tau}_{n+1}') \cap \Z^{n+1} = \biguplus_{k = 0}^{\lfloor \lambda n \rfloor} \left(\frac{k}{n}(\hat{\triangle}_n - \boldsymbol{\tau}_{n+1}') \cap \Z^{n+1}\right).
    \end{align*}

\subsubsection{Part $3$}
Finally, we use the fact that there is a bijective correspondence between $$\left(\frac{k}{n}(\hat{\triangle}_n - \boldsymbol{\tau}_{n+1}') \cap \Z^{n+1}\right) \text{ and } \left(\frac{k}{n}(\triangle_n-\boldsymbol{\tau}_n) \cap \Z^n\right).$$

\noindent Note that by Lemma \ref{lem5.5}, 
    \begin{equation}\label{eq:proof1}
        \left|\lambda(\triangle_{n+1} - \boldsymbol{\tau}_{n+1}) \cap \Z^{n+1}\right| = \left|\lambda(\triangle_{n+1}' - \boldsymbol{\tau}_{n+1}') \cap \Z^{n+1}\right|.
    \end{equation}
From what we have obtained in Part 2,
    \begin{align*}
      \left|\lambda(\triangle_{n+1}' - \boldsymbol{\tau}_{n+1}') \cap \Z^{n+1}\right| &= \left|\biguplus_{k = 0}^{\lfloor \lambda n \rfloor} \left(\frac{k}{n}(\hat{\triangle}_n - \boldsymbol{\tau}_{n+1}') \cap \Z^{n+1}\right)\right|
      = \sum_{k = 0}^{\lfloor \lambda n \rfloor}  \left|\frac{k}{n}(\hat{\triangle}_n - \boldsymbol{\tau}_{n+1}') \cap \Z^{n+1}\right|.
  \end{align*}
  Further, we have that
  \begin{align*}
      \left|\lambda(\triangle_{n+1} - \boldsymbol{\tau}_{n+1}) \cap \Z^{n+1}\right| &= \sum_{k=0}^{\lfloor n \lambda \rfloor}\left|\frac{k}{n}(\triangle_n-\overline{\boldsymbol{\tau}_{n+1}}) \cap \Z^n\right|
      = \sum_{k=0}^{\lfloor n \lambda \rfloor}\left|\frac{k}{n}(\triangle_n-\boldsymbol{\tau}_n) \cap \Z^n\right|,
    \end{align*}
where the second equality follows from Lemma \ref{lem:realtranslation}.

\subsection{Proof of Theorem \ref{thm:gorenstein}}\label{subsec:pfthm5.11}
First, we prove the claim: for all $t \in \Z$,
      \[
        \left|\left(\left(\frac{t}{n} + \frac{2n-1}{n}\right)(\triangle_n - \boldsymbol{\tau}_{n})\right)^{\circ} \cap \Z^n\right| = \left|\frac{t}{n}(\triangle_n - \boldsymbol{\tau}_n) \cap \Z^n\right|.
      \]
      In words, the relative interior lattice-point count for the $\left(\frac{t}{n} + \frac{2n-1}{n}\right)$-dilate of $\triangle_n - \boldsymbol{\tau}_{n}$ matches the lattice-point count of the $\frac{t}{n}$-dilate of $\triangle_n - \boldsymbol{\tau}_n$.  
      We proceed by induction on $n$, which relies on manipulations of summations. 
      
      Consider $n=2$.
      Observe that $\triangle_2-\boldsymbol{\tau}_2$ is simply a line segment from $(0,0)$ to $(-1,1)$, $(\frac{t}{2} + \frac{3}{2})(\triangle_2-\boldsymbol{\tau}_2)$ is a line segment from $(0,0)$ to $(-\frac{t}{2} - \frac{3}{2}, \frac{t}{2} + \frac{3}{2})$, and $\frac{t}{2}(\triangle_2-\boldsymbol{\tau}_2)$ is a line segment from $(0,0)$ to $(-\frac{t}{2}, \frac{t}{2})$.
      Notice that all the interior lattice points of the line segment from $(0,0)$ to $(-\frac{t}{2} - \frac{3}{2}, \frac{t}{2} + \frac{3}{2})$ lie on the line segment from $(0,0)$ to $(-\frac{t}{2}, \frac{t}{2})$ giving the following relation:
      \[
        L_\R\left((\triangle_2 - \boldsymbol{\tau}_2)^\circ; \frac{t}{2} + \frac{3}{2}\right) = L_\R\left(\triangle_2 - \boldsymbol{\tau}_2; \frac{t}{2}\right)
      \]
    for all $t \in \Z$.

Assume the statement holds for some $m \in \Z_{\geq 2}$.
We then have:
\begin{align*}
        \left|\frac{t}{m+1}(\triangle_{m+1} - \boldsymbol{\tau}_{m+1}) \cap \Z^{m+1}\right| &= \sum_{k=0}^{\lfloor m \cdot \frac{t}{m+1} \rfloor} \left|\frac{k}{m}(\triangle_m - \boldsymbol{\tau}_m) \cap \Z^m\right| \\
         &= \underbrace{\sum_{k=0}^{\lfloor m \cdot \frac{t}{m+1}\rfloor} \left|\left(\frac{k}{m}+\frac{2m-1}{m}(\triangle_m - \boldsymbol{\tau}_m)\right)^{\circ} \cap \Z^m\right|}_{(\bigstar)}.
      \end{align*}
      By Claim \ref{claim: floorfunctions}, we can derive the following:
      \begin{align*}
        (\bigstar) &= \sum_{k=2m-1}^{\lceil m \cdot \frac{t}{m+1} + \frac{2m+1}{m+1}\rceil -1} \left|\left(\frac{k}{m}(\triangle_m - \boldsymbol{\tau}_m)\right)^{\circ} \cap \Z^m \right|\\
        &= \sum_{k=0}^{\lceil m \cdot \frac{t}{m+1} + \frac{2m+1}{m+1}\rceil -1} \left|\left(\frac{k}{m}(\triangle_m - \boldsymbol{\tau}_m)\right)^{\circ} \cap \Z^m \right| -  \sum_{k=0}^{2m-2} \left|\left(\frac{k}{m}(\triangle_m - \boldsymbol{\tau}_m)\right)^{\circ} \cap \Z^m \right|\\
        &= \left|\left(\left(\frac{t}{m+1} + \frac{2m+1}{m+1}\right)(\triangle_{m+1} - \boldsymbol{\tau}_{m+1})\right)^{\circ} \cap \Z^{m+1}\right| - \sum_{k=0}^{2m-2} \left|\left(\frac{k}{m}(\triangle_m - \boldsymbol{\tau}_m)\right)^{\circ} \cap \Z^m \right|.
      \end{align*}
      Here, note that by Proposition \ref{prop:interiorrecurrence},
      \begin{align*}
        \sum_{k=0}^{2m-2} \left|\left(\frac{k}{m}(\triangle_m - \boldsymbol{\tau}_m)\right)^{\circ} \cap \Z^m \right| &=  \left|\left(\frac{2m-1}{m}(\triangle_{m+1} - \boldsymbol{\tau}_{m+1})\right)^{\circ} \cap \Z^{m+1}\right|\\
        &= \left|\left(\triangle_{m+1} - \boldsymbol{\tau}_{m+1}\right)^{\circ} \cap \Z^{m+1}\right|,\\
        &= 0,
      \end{align*}
      where the last equality holds by Theorem \ref{thm:hollow}. 
      Hence,
      \[
       (\bigstar) = \left|\left(\left(\frac{t}{m+1} + \frac{2m+1}{m+1}\right)(\triangle_{m+1} - \boldsymbol{\tau}_{m+1})\right)^{\circ} \cap \Z^{m+1}\right|.
      \]
      Therefore, by induction, we have proved that for all $t \in \Z_{\geq 0}$, $n \in \Z_{\geq 2}$,
      \[
        \left|\left(\left(\frac{t}{n} + \frac{2n-1}{n}\right)(\triangle_n - \boldsymbol{\tau}_{n})\right)^{\circ} \cap \Z^n\right| = \left|\frac{t}{n}(\triangle_n - \boldsymbol{\tau}_n) \cap \Z^n\right|.
      \]

      Now, by Theorem \ref{thm:sssrecurrence}, for all $n \in \Z_{\geq 3}$
      \begin{align*}
        L_\R(\triangle_n - \boldsymbol{\tau}_n ; \lambda) &= \sum_{k=0}^{\lfloor(n-1)\lambda\rfloor} L_\R\left(\triangle_{n-1}-\boldsymbol{\tau}_{n-1}; \frac{k}{n}\right).
        \intertext{By our recently proven claim,}
        &= \sum_{k=0}^{\lfloor(n-1)\lambda\rfloor} \left|\left(\left(\frac{k}{n-1} + \frac{2n-3}{n-1}\right)(\triangle_{n-1} - \boldsymbol{\tau}_{n-1})\right)^{\circ} \cap \Z^{n-1}\right|\\
        &= \sum_{k = 0}^{\lceil(n-1)(\lambda+2)\rceil -1} \left|\left(\left(\frac{k}{n-1}\right)(\triangle_{n-1} - \boldsymbol{\tau}_{n-1})\right)^{\circ} \cap \Z^{n-1}\right|\\
        &\text{\;\;\;\;\;\;\;\;\;\;\;\;\;\;\;\;\;\;} - \sum_{k = 0}^{2n-4} \left|\left(\left(\frac{k}{n-1}\right)(\triangle_{n-1} - \boldsymbol{\tau}_{n-1})\right)^{\circ} \cap \Z^{n-1}\right|,\\
        \intertext{and by Theorem \ref{thm:hollow},}
        &= \left|((\lambda+2)(\triangle_n - \boldsymbol{\tau}_n))^{\circ} \cap \Z^{n}\right| - 0\\
        &= L_\R((\triangle_n - \boldsymbol{\tau}_n)^{\circ}; \lambda+2).
      \end{align*}
      One can also show by solving inequalities that for $n = 2$ the result holds, but we omit the technicalities here. 
      Therefore, for all $n \in \Z_{\geq 2}$,
      \[
        L_\R(\triangle_n - \boldsymbol{\tau}_n ; \lambda) = L_\R((\triangle_n - \boldsymbol{\tau}_n)^{\circ}; \lambda+2).
      \]
      In particular, for all $n \in \Z_{\geq 2}$ and $t \in \Z_{\geq 0}$, $\triangle_n$ is Gorenstein of index $2$, i.e., 
      \[
        L_\Z(\triangle_n ; t) =  L_\Z(\triangle_n^{\circ} ; t+2).
      \]

\vspace{0.25cm}

\section*{Dedication \& Acknowledgments}
\begin{center}
In loving memory of a dear friend, Luca Elghanayan.   
\end{center}
\vspace{0.25cm} 
The authors thank the anonymous referee for pointing out relevant literature. 
The authors are grateful to Matthias Beck, Simon Rubenstein-Salzedo, and Julie Vega for fruitful conversation.
Andr\'es R.~Vindas-Mel\'endez also thanks Cameron Lowe, Justin McClung, Emily Muller-Foster, and Heather Willis for conversations at the start of this project. Eon Lee is supported by Basic Science Research Program through the National Research Foundation of Korea (NRF) funded by the Ministry of Education (NRF-2022R1F1A1063424). Andr\'es R.~Vindas-Mel\'endez was partially supported by the National Science Foundation under Award DMS-$2102921$.

\bibliographystyle{amsplain}
\bibliography{references}

\providecommand{\bysame}{\leavevmode\hbox to3em{\hrulefill}\thinspace}
\providecommand{\MR}{\relax\ifhmode\unskip\space\fi MR }
\providecommand{\MRhref}[2]{%
  \href{http://www.ams.org/mathscinet-getitem?mr=#1}{#2}
}
\providecommand{\href}[2]{#2}
\begin{thebibliography}{10}

\bibitem{ArdilaSchindlerVindas}
Federico Ardila, Anna Schindler, and Andr\'{e}s~R. Vindas-Mel\'{e}ndez,
  \emph{The equivariant volumes of the permutahedron}, Discrete Comput. Geom.
  \textbf{65} (2021), no.~3, 618--635.

\bibitem{ArdilaSupinaVindas}
Federico Ardila, Mariel Supina, and Andr\'{e}s~R. Vindas-Mel\'{e}ndez,
  \emph{The equivariant {E}hrhart theory of the permutahedron}, Proc. Amer.
  Math. Soc. \textbf{148} (2020), no.~12, 5091--5107.

\bibitem{BeckEliaRehberg}
Matthias Beck, Sophia Elia, and Sophie Rehberg, \emph{Rational {E}hrhart
  theory}, preprint (arXiv:2110.10204).

\bibitem{br15}
Matthias Beck and Sinai Robins, \emph{Computing the continuous discretely},
  second ed., Undergraduate Texts in Mathematics, Springer, New York, 2015,
  Integer-point enumeration in polyhedra.

\bibitem{bona03}
Mikl\'{o}s B\'{o}na, \emph{A survey of stack-sorting disciplines}, Electron. J.
  Combin. \textbf{9} (2002/03), no.~2, Article 1, 16.

\bibitem{BousquetEriksson}
Mireille Bousquet-M\'{e}lou and Kimmo Eriksson, \emph{Lecture hall partitions},
  Ramanujan J. \textbf{1} (1997), no.~1, 101--111.

\bibitem{BrandenburgDeLoeraMeroni}
Marie-Charlotte Brandenburg, Jes\'us A.~De Loera, and Chiara Meroni, \emph{The
  best ways to slice a polytope}, preprint (arXiv:2304.14239).

\bibitem{BrightSavage}
Katie~L. Bright and Carla~D. Savage, \emph{The geometry of lecture hall
  partitions and quadratic permutation statistics}, 22nd {I}nternational
  {C}onference on {F}ormal {P}ower {S}eries and {A}lgebraic {C}ombinatorics
  ({FPSAC} 2010), Discrete Math. Theor. Comput. Sci. Proc., vol.~AN, Assoc.
  Discrete Math. Theor. Comput. Sci., Nancy, 2010, pp.~569--580.

\bibitem{sorting-avoiding}
Giulio Cerbai, Anders Claesson, Luca Ferrari, and Einar Steingr\'{\i}msson,
  \emph{Sorting with pattern-avoiding stacks: the 132-machine}, Electron. J.
  Combin. \textbf{27} (2020), no.~3, Paper No. 3.32, 27.

\bibitem{CorteelLeeSavage}
Sylvie Corteel, Sunyoung Lee, and Carla~D. Savage, \emph{Enumeration of
  sequences constrained by the ratio of consecutive parts}, S\'{e}m. Lothar.
  Combin. \textbf{54A} (2005/07), Art. B54Aa, 12. \MR{2180869}

\bibitem{DavisSagan}
Robert Davis and Bruce Sagan, \emph{Pattern-avoiding polytopes}, European J.
  Combin. \textbf{74} (2018), 48--84.

\bibitem{defant}
Colin Defant, \emph{Counting 3-stack-sortable permutations}, J. Combin. Theory
  Ser. A \textbf{172} (2020), 105209, 26.

\bibitem{132-avoiding}
Eric~S. Egge and Toufik Mansour, \emph{132-avoiding two-stack sortable
  permutations, {F}ibonacci numbers, and {P}ell numbers}, Discrete Appl. Math.
  \textbf{143} (2004), no.~1-3, 72--83.

\bibitem{Ehrhart}
Eug\`ene Ehrhart, \emph{Sur les poly\`edres rationnels homoth\'{e}tiques \`a
  {$n$}\ dimensions}, C. R. Acad. Sci. Paris \textbf{254} (1962), 616--618.

\bibitem{hk10}
Alan~J. Hoffman and Joseph~B. Kruskal, \emph{Integral boundary points of convex
  polyhedra}, pp.~49--76, Springer Berlin Heidelberg, Berlin, Heidelberg, 2010.

\bibitem{knuth68}
Donald~E. Knuth, \emph{The art of computer programming.}, vol.~1,
  Addison-Wesley, 1968.

\bibitem{Macdonald}
Ian~G. Macdonald, \emph{Polynomials associated with finite cell-complexes}, J.
  London Math. Soc. (2) \textbf{4} (1971), 181--192.

\bibitem{complexity}
Adeline Pierrot and Dominique Rossin, \emph{2-stack sorting is polynomial},
  Theory Comput. Syst. \textbf{60} (2017), no.~3, 552--579.

\bibitem{permutrees}
Vincent Pilaud, Vivane Pons, and Daniel Tamayo~Jimenez, \emph{Permutree
  sorting}, Algebr. Comb. \textbf{6} (2023), no.~1, 53--74.

\bibitem{PHS}
Pieter~Hendrick Schoute, \emph{Analytic treatment of the polytopes regularly
  derived from the regular polytopes}, Verhandelingen der Koninklijke Akademie
  van Wetenschappen te Amsterdam \textbf{11} (1911), no.~3, \phantom{}.

\bibitem{West}
Julian West, \emph{Permutations with forbidden subsequences; and stack-sortable
  permutations.}, Ph.D. thesis, Massachusetts Institute of Technology, 1990.

\bibitem{ziegler95}
G\"{u}nter~M. Ziegler, \emph{Lectures on polytopes}, Graduate Texts in
  Mathematics, vol. 152, Springer-Verlag, New York, 1995.

\end{thebibliography}

\section{Appendix}\label{appendix}

\begin{claim}\label{claim: floorfunctions}
    For $\boldsymbol{\tau}_n$, we have
    \[
        \sum_{k=0}^{\lfloor n \cdot \frac{t}{n+1}\rfloor} \left|\left(\frac{k}{n}+\frac{2n-1}{n}(\triangle_n - \boldsymbol{\tau}_n)\right)^{\circ} \cap \Z^n\right| = \sum_{k=2n-1}^{\lceil n \cdot \frac{t}{n+1} + \frac{2n+1}{n+1}\rceil -1} \left|\left(\frac{k}{n}(\triangle_n - \boldsymbol{\tau}_n)\right)^{\circ} \cap \Z^n \right|.
    \]
\end{claim}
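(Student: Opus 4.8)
The statement is, in essence, a reindexing identity together with one elementary floor/ceiling computation; the polytope $\triangle_n-\boldsymbol{\tau}_n$ enters only as the common summand, so nothing about its geometry is actually needed.

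The plan is as follows. First I would rewrite the left-hand dilation factor as $\frac{k}{n}+\frac{2n-1}{n}=\frac{k+2n-1}{n}$ and substitute the new summation index $j=k+2n-1$; the left-hand sum then becomes $\sum_{j=2n-1}^{\lfloor nt/(n+1)\rfloor+2n-1}\bigl|\bigl(\tfrac{j}{n}(\triangle_n-\boldsymbol{\tau}_n)\bigr)^{\circ}\cap\Z^n\bigr|$, whose summand and lower index of summation now agree verbatim with those of the right-hand sum. It therefore remains only to check that the two upper limits coincide, namely that
\[
\Bigl\lfloor \tfrac{nt}{n+1}\Bigr\rfloor + 2n - 1 \;=\; \Bigl\lceil\, n\bigl(\tfrac{t}{n+1}+\tfrac{2n+1}{n+1}\bigr)\Bigr\rceil - 1,
\]
this being the form in which the right-hand endpoint is invoked inside the proof of Theorem~\ref{thm:gorenstein}, where Proposition~\ref{prop:interiorrecurrence} is applied to the dilate $\bigl(\tfrac{t}{n+1}+\tfrac{2n+1}{n+1}\bigr)(\triangle_{n+1}-\boldsymbol{\tau}_{n+1})$. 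Once the endpoints are shown equal, the two sums are literally equal term by term and the claim follows.

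To prove the endpoint identity I would expand $n\bigl(\tfrac{t}{n+1}+\tfrac{2n+1}{n+1}\bigr)=\tfrac{nt}{n+1}+2n-\tfrac{n}{n+1}$, so that the right-hand ceiling equals $2n+\bigl\lceil\tfrac{n(t-1)}{n+1}\bigr\rceil$ and the claim reduces to $\bigl\lfloor \tfrac{nt}{n+1}\bigr\rfloor=\bigl\lceil\tfrac{n(t-1)}{n+1}\bigr\rceil$; using $\tfrac{nm}{n+1}=m-\tfrac{m}{n+1}$ for integers $m$ this is exactly the standard identity $\bigl\lceil\tfrac{t}{n+1}\bigr\rceil=\bigl\lfloor\tfrac{t-1}{n+1}\bigr\rfloor+1$, valid for all integers $t$ and all $n\ge 1$ (equivalently, writing $nt=q(n+1)+r$ with $0\le r\le n$ one checks both sides equal $q$ directly). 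The only points demanding any care are this floor/ceiling bookkeeping and the degenerate range $t\le 0$, where both upper limits drop below $2n-1$ and the two sums are simply empty; there is no substantive obstacle, which is presumably why the statement sits in the appendix.
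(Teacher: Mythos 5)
Your proof is correct and follows essentially the same route as the paper's: reindex the left-hand sum by $j=k+2n-1$ and check that the upper summation limits agree, reading the right-hand endpoint as $\lceil n(\tfrac{t}{n+1}+\tfrac{2n+1}{n+1})\rceil-1$, which is exactly the interpretation the paper itself uses when it feeds this bound into Proposition \ref{prop:interiorrecurrence}. The only difference is cosmetic: the paper verifies the endpoint identity by splitting into the cases $n+1\mid t$ and $n+1\nmid t$, whereas you reduce it uniformly to the standard identity $\lceil t/(n+1)\rceil=\lfloor (t-1)/(n+1)\rfloor+1$.
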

\begin{proof}
    By dividing the cases for when $n+1 | t$ and $n+1 \nmid t$, we can derive the following:
    \begin{align*}
        \sum_{k=0}^{\lfloor n \cdot \frac{t}{n+1}\rfloor} &\left|\left(\frac{k}{n}+\frac{2n-1}{n}(\triangle_n - \boldsymbol{\tau}_n)\right)^{\circ} \cap \Z^n\right| \\
        &=
        \begin{cases}
          \displaystyle \sum_{k=2n-1}^{\lceil n \cdot \frac{t}{n+1} + 2n \rceil -1} \left|\left(\frac{k}{n}(\triangle_n - \boldsymbol{\tau}_n)\right)^{\circ} \cap \Z^n \right| & \text{if } n+1 | t\\
          \displaystyle \sum_{k=2n-1}^{\lceil n \cdot \frac{t}{n+1} + 2n -1\rceil -1} \left|\left(\frac{k}{n}(\triangle_n - \boldsymbol{\tau}_n)\right)^{\circ} \cap \Z^n \right| & \text{if } n+1 \nmid t.
        \end{cases}
      \end{align*}
      Note that if $n+1 \mid t$, then $n \cdot \frac{t}{n+1} + 2n \in \Z$, thus
      \[
      \left\lceil n \cdot \frac{t}{n+1} + 2n \right\rceil = \left\lceil n \cdot \frac{t}{n+1} + 2n - \frac{n}{n+1} \right\rceil.
      \]
      Alternatively, if $n+1 \nmid t$, then $0 < n \cdot \frac{t}{n+1} + 2n -1 - \lceil n \cdot \frac{t}{n+1} + 2n -1 \rceil \leq \frac{n}{n+1}$, thus
      \[
      \left\lceil n \cdot \frac{t}{n+1} + 2n -1 \right\rceil = \left\lceil n \cdot \frac{t}{n+1} + 2n -1 + \frac{1}{n+1}\right\rceil.
      \]
      Therefore,
      \[
        \sum_{k=0}^{\lfloor n \cdot \frac{t}{n+1}\rfloor} \left|\left(\frac{k}{n}+\frac{2n-1}{n}(\triangle_n - \boldsymbol{\tau}_n)\right)^{\circ} \cap \Z^n\right| = \sum_{k=2n-1}^{\lceil n \cdot \frac{t}{n+1} + \frac{2n+1}{n+1}\rceil -1} \left|\left(\frac{k}{n}(\triangle_n - \boldsymbol{\tau}_n)\right)^{\circ} \cap \Z^n \right|.
      \]
\end{proof}

\end{document}